\documentclass[titlepage,final,11pt]{article}
\usepackage{enumerate}
\usepackage{enumitem}
\usepackage{array}
\usepackage[reqno]{amsmath}
\usepackage{amssymb}
\usepackage{mathtools}
\usepackage{latexsym}
\usepackage{makeidx}
\usepackage{fancybox}
\input epsf.sty
\usepackage[dvips]{graphicx,color}
\usepackage{showlabels}
\usepackage{subcaption}
\usepackage[dvipsnames]{xcolor}
\usepackage[normalem]{ulem}
\usepackage{amsthm}
\usepackage{algorithm}
\usepackage{algpseudocode}

\theoremstyle{change}
\newtheorem{proclaim}{PROCLAIM}[section]
\newtheorem{theorem}[proclaim]{Theorem}

\newtheorem{lemma}[proclaim]{Lemma}
\newtheorem{proposition}[proclaim]{Proposition}

\newtheorem{example}[proclaim]{Example}

\newtheorem{assumption}[proclaim]{Assumption}
\newtheorem*{assumption*}{Assumption}

\numberwithin{equation}{section}

\outer\def\proclaim #1. #2\par{\medbreak \noindent{\bf#1.\enspace}{\sl#2}\par
  \ifdim\lastskip<\medskipamount
  \removelastskip\penalty55\medskip\fi}
\def\state #1. { \noindent{\bf#1.\enspace}}
\def\algo #1. { \noindent{\bf#1.\enspace}}

\DeclareMathOperator{\cl}{cl}

\DeclareMathOperator{\con}{con}

\DeclareMathOperator{\dist}{dist}

\DeclareMathOperator{\dom}{dom}
\DeclareMathOperator{\gph}{gph}
\DeclareMathOperator{\epi}{epi}

\DeclareMathOperator{\nt}{int}

\DeclareMathOperator{\pos}{pos}

\DeclareMathOperator{\ri}{ri}
\DeclareMathOperator{\spn}{span}

\newcommand{\comp}{\,{\raise 1pt \hbox{$\scriptstyle\circ$}}\,}

\newcommand{\reals}{\mathbb{R}}
\newcommand{\Reals}{\overline{\mathbb{R}}}
\newcommand{\natnums}{{{\rm l} \kern -.13em {\rm N} }}
\newcommand{\nats}{\mathbb{N}}
\newcommand{\snats}{{I\kern -.29em N}}
\newcommand{\rats}{{Q\kern -.64em \raise 1pt \hbox{$\scriptstyle |$}\;\,}}
\newcommand{\srats}
	{{Q\kern -.56em \raise 1.2pt \hbox{$\scriptscriptstyle /$}\,}}
\newcommand{\ints}{Z\kern -.46em Z}
\newcommand{\ball}{\mathbb{B}}
\newcommand{\pluss}{\hskip1pt \raise1pt\vbox{\hrule width6pt \vskip1pt \hrule
                    width6pt} \kern-4pt{\lower1pt\hbox{\vrule height6pt
		    \kern1pt\vrule height6pt}}\hskip5pt}
\newcommand{\eop}
	{\hfill{$\vcenter{\hrule height1pt \hbox{\vrule width1pt height5pt
   	 \kern5pt \vrule width1pt} \hrule height1pt}$} \medskip}

\newcommand{\setd}{{ d \kern -.15em l}}
\newcommand{\hatsetd}{ d \hat{\kern -.15em l }}

\renewcommand{\epsilon}{\varepsilon}
\renewcommand{\phi}{\varphi}

\newcommand{\lset}{\big\lbrace}
\newcommand{\mset}{{\,\big\vert\,}}
\newcommand{\rset}{\big\rbrace}

\newcommand{\tto}{\;{\lower 1pt \hbox{$\rightarrow$}}\kern -12pt
           \hbox{\raise 2.5pt \hbox{$\rightarrow$}}\;}
\newcommand{\overto}[1]{\,{\raise 0pt\hbox{$\rightarrow$}}\kern -9pt
     \hbox{\lower 3pt \hbox{$\scriptscriptstyle#1$}}\hskip6pt}
\newcommand{\underto}[1]{\,{\lower 1pt\hbox{$\rightarrow$}}\kern -9pt
     \hbox{\raise 4pt \hbox{$\,\scriptscriptstyle#1$}}\hskip7pt}
\newcommand{\bigoverto}[1]{{\raise 0pt\hbox{$\,\longrightarrow$}}\kern -16pt
     \hbox{\lower 3pt \hbox{$\scriptscriptstyle#1$}}\hskip4pt}
\newcommand{\bigunderto}[1]{\,{\lower 1pt\hbox{$\longrightarrow$}}\kern -16pt
     \hbox{\raise 4pt \hbox{$\,\scriptscriptstyle#1$}}\hskip6pt}
\newcommand{\bigbigto}[2]{\,{\raise 0pt\hbox{$\,\longrightarrow$}}\kern -16pt
     \hbox{\lower 3pt \hbox{$\scriptscriptstyle#2$}}\kern -10pt
     \hbox{\raise 4pt \hbox{$\,\scriptscriptstyle#1$}}\hskip7pt}
\newcommand{\downto}{{\raise 1pt \hbox{$\scriptscriptstyle \,\searrow\,$}}}
\newcommand{\upto}{{\raise 1pt \hbox{$\scriptscriptstyle \,\nearrow\,$}}}

\newcommand{\notimply}
	{\quad\hbox{$\Longrightarrow \kern -14pt {/}$}\hskip6pt\quad}

\newcommand{\lto}{\,{\lower 1pt\hbox{$\rightarrow$}}\kern -10pt
     \hbox{\raise 4pt \hbox{$\, \scriptstyle l$}}\hskip7pt}
\newcommand{\eto}{\,{\lower 1pt\hbox{$\rightarrow$}}\kern -10pt
     \hbox{\raise 4pt \hbox{$\, \scriptstyle e$}}\hskip7pt}
\newcommand{\hto}{\,{\lower 1pt\hbox{$\rightarrow$}}\kern -11pt
     \hbox{\raise 4pt \hbox{$\, \scriptstyle h$}}\hskip7pt}
\newcommand{\pto}{\,{\lower 1pt\hbox{$\rightarrow$}}\kern -11pt
     \hbox{\raise 4.5pt \hbox{$\, \scriptstyle p$}}\hskip7pt}
\newcommand{\cto}{\,{\lower 1pt\hbox{$\rightarrow$}}\kern -11pt
     \hbox{\raise 4pt \hbox{$\, \scriptstyle c$}}\hskip7pt}
\newcommand{\gto}{\,{\lower 1pt\hbox{$\rightarrow$}}\kern -11pt
     \hbox{\raise 4.5pt \hbox{$\, \scriptstyle g$}}\hskip7pt}
\newcommand{\sto}{\,{\lower 1pt\hbox{$\rightarrow$}}\kern -10pt
     \hbox{\raise 4pt \hbox{$\, \scriptstyle s$}}\hskip7pt}
\newcommand{\awto}{\,{\lower 1pt\hbox{$\rightarrow$}}\kern -15pt
     \hbox{\raise 4pt \hbox{$\, \scriptstyle aw$}}\hskip7pt}
\def\Nto{\,{\raise 1pt\hbox{$\rightarrow$}}\kern -13pt
     \hbox{\lower 3pt \hbox{$\, \scriptstyle N$}}\hskip7pt}
\def\Cto{\,{\raise 1pt\hbox{$\rightarrow$}}\kern -14pt
     \hbox{\lower 3pt \hbox{$\, \scriptstyle C$}}\hskip7pt}
\def\fto{\,{\raise 1pt\hbox{$\rightarrow$}}\kern -14pt
     \hbox{\lower 3pt \hbox{$\, \scriptstyle f$}}\hskip7pt}

\newcommand{\low}[1]{{\lower1pt \hbox{$\scriptstyle #1$}}}
\newcommand{\loww}[1]{{\lower2pt \hbox{$\scriptstyle #1$}}}
\newcommand{\high}[1]{{\raise1pt \hbox{$\scriptstyle #1$}}}

\newcommand{\cI}{{\cal I}}
\newcommand{\cJ}{{\cal J}}

\renewcommand{\liminf}{\mathop{\rm liminf}}
\renewcommand{\limsup}{\mathop{\rm limsup}}

\newcommand{\ninf}{\mathop{\rm inf}\nolimits}

\newcommand{\nargmin}{\mathop{\rm argmin}\nolimits}

\newcommand{\lwdy}[2]{\mathrel{\mathop
        {\raisebox{0.1ex}{\null$#1$}}{\hbox{\kern -1.0em
	{\raisebox{-0.8ex}{$\scriptstyle{\;\to #2}$}}}}}}
\newcommand{\lwwdy}[2]{\mathrel{\mathop
        {\raisebox{0.2ex}{\null$#1$}}{\hbox{\kern -1.0em
	{\raisebox{-1.1ex}{$\scriptstyle{\;\to #2}$}}}}}}
\newcommand{\slwdy}[2]{\scriptsize{{\mathrel{\mathop
        {\raisebox{0.1ex}{\null$#1$}}{\hbox{\kern -1.0em
	{\raisebox{-0.8ex}{$\scriptstyle{\;\to #2}$}}}}}}}}
\newcommand{\slwwdy}[2]{\scriptsize{{\mathrel{\mathop
        {\raisebox{0.2ex}{\null$#1$}}{\hbox{\kern -1.0em
	{\raisebox{-1.1ex}{$\scriptstyle{\;\to #2}$}}}}}}}}

\definecolor{lightgray}{gray}{0.75}
\definecolor{myred}{rgb}{0.55,0,0}
\definecolor{myblue}{rgb}{0,0,0.5} % hex: #00007f
\definecolor{mygreen}{rgb}{0,0.5,0} % hex: #00007f
\definecolor{purple}{rgb}{0.5,0,0.5} % hex: #00007f
\definecolor{turq}{rgb}{0,0.805,0.816} % hex: #00007f
\definecolor{maroon}{rgb}{0.51,0,0}
\definecolor{MAROON}{rgb}{0.51,0,0}
\definecolor{redor}{rgb}{0.78,0.078,0.078}
\definecolor{dgreen}{rgb}{0,0.3,0}
\newcommand{\bcdot}{\,{\raise .2ex \hbox{$\centerdot$}}\,}

\newcommand{\bbI}{\mathbb{I}}

\begin{document}

%% FOOTNOTE MARK: *, \dag ... instead of  1, 2, ...
%\renewcommand{\thefootnote}{\fnsymbol{footnote}}

\begin{center}
\begin{large}
{\bf Optimistic Bilevel Optimization with Composite Lower-Level Problem}
\smallskip
\end{large}
\vglue 0.5truecm
\begin{tabular}{cc}
  \begin{large} {\sl Mattia Solla and Johannes O. Royset} \end{large}\\
  Daniel J. Epstein Department of Industrial and Systems Engineering\\
  University of Southern California
\end{tabular}

\end{center}

\vskip 0.6truecm

\noindent {\bf Abstract}. This paper introduces a novel double regularization scheme for bilevel optimization problems whose lower-level problem is composite and convex, but not necessarily strongly convex, in the lower-level variable. The analysis focuses on the primal-dual solution mapping of the regularized lower-level problem and exploits its properties to derive an almost-everywhere formula for the gradient of the regularized hyper-objective under mild assumptions. The paper then establishes conditions under which the hyper-objective of the actual problem is well defined and shows that its gradient can be approximated by the gradient of the regularized hyper-objective. Building on these results, a gradient sampling-based algorithm computes approximately stationary points of the regularized hyper-objective, and we prove its convergence to stationary points of the actual problem. Two numerical examples from machine learning demonstrate the proposed approach.

\vskip 0.2truecm

\noindent {\bf Keywords}: Bilevel optimization, parametric optimization, nonsmooth analysis, regularization, composite optimization.\\

\noindent {\bf Date}:\quad \ \today 

\baselineskip=15pt

\section{Introduction}

Bilevel optimization is a class of optimization problems that deals with a hierarchical structure consisting of two levels. The lower-level decision variables are constrained to belong to the set of minimizers of a lower-level problem parametrized by the upper-level decision variables. Bilevel optimization appears in a wide range of applications, including hyperparameter optimization \cite{kunisch2013bilevel,alcantara2025unified}, adversarial learning \cite{jiang2021learning, zhang2022revisiting}, reinforcement learning \cite{yang2019provably, zheng2024safe}, economics \cite{zhu2023bilevel, mou2019bi}, and transportation \cite{meng2001equivalent, zhang2009bilevel}, and are known to be challenging due to the instability of the lower-level minimizers \cite{BeckBienstockSchmidtThurauf.23,Royset.25}. 

In this paper, we consider a class of bilevel programs with a convex lower-level problem, possibly {\em not} strongly convex, expressed in terms of an extended real-valued, epi-polyhedral, and nondecreasing function composed with a twice continuously differentiable ($C^2$) mapping. Specifically, we adopt the optimistic perspective (cf. \cite[Chapter 5]{dempe2002foundations}) and consider the problem 
\begin{equation}\label{prob:UL}
    \underset{x\in X,y\in \reals^m}{\text{minimize  }}f(x,y) \quad \text{subject to } \ y\in Y(x)
\end{equation}
with $X=\lset x\in \reals^n\mset c(x)\leq 0\rset$ and 
\begin{equation}\label{prob:LL}
    Y(x) = \nargmin_y g(x,y)+h\big(G(x,y)\big),
\end{equation}
where we assume {\em throughout the paper} that $f:\reals^n\times\reals^m\to\reals$ is lower bounded and continuously differentiable ($C^1$), $g:\reals^n\times\reals^m\to \reals$ is $C^2$,  $h:\reals^s\to \Reals = [-\infty,\infty]$ is proper, epi-polyhedral, and nondecreasing, $c:\reals^n\to \reals^r$ is locally Lipschitz continuous, and 
\[
G(x,y)=\big(g_1(x,y), g_2(x,y),\hdots, g_s(x,y)\big)
\]
for $C^2$ functions $g_i:\reals^n\times\reals^m\to \reals$, $i=1,\hdots,s$. Moreover, $g(x,\cdot)$ and each $g_i(x,\cdot)$ are convex for all $x\in\reals^n$. 

The lower-level problem \eqref{prob:LL} is an extended nonlinear program \cite{rockafellar1999extended} and provides a flexible structure capable of representing numerous formulations arising in applications. If $h(z)=0$ when $z\leq 0$ and $h(z) = \infty$ otherwise, then the lower-level problem amounts to an inequality constrained convex program. If $h(z) =\max\{z_1, \dots,z_{s}\}$, then it becomes a convex minimax problem. Other expressions for $h$ represent nonsmooth functions like the $\ell_1$-norm and the hinge-loss as well as arbitrary polyhedral constraints on $y$ in \eqref{prob:LL}. Current literature in bilevel optimization focuses mainly on the unconstrained and the nonlinear programming settings. The lower-level problem \eqref{prob:LL} allows us to handle directly a broader class of objectives, including many regularizers from estimation and learning, and therefore avoids reformulations that may result in a large number of additional variables and constraints. 

This paper develops descent-type algorithms for the bilevel problem \eqref{prob:UL} using first-order information about $x \mapsto f(x,Y(x))$, the {\em hyper-objective} of the problem. It is known that the hyper-objective may be set-valued (and the function $x\mapsto \min_{y\in Y(x)}f(x,y)$ may be discontinuous) when the lower-level problem is only convex but not strongly convex. In fact, even when differentiable, computing stationary points of the hyper-objective can be intractable \cite{chen2024finding}. To avoid issues when $Y$ is set-valued and to compute descent directions, the algorithms rely on a regularized lower-level problem whose associated solution mapping is single-valued and $C^1$. We extend the hyper-gradient method of \cite{dempe2000bundle} from the case of smooth inequality constraints to the composite form \eqref{prob:LL}, and also relax the assumptions underpinning the method. In the process, we develop verifiable conditions ensuring that the primal-dual solution mapping of \eqref{prob:LL} is single-valued and $C^1$ around a point, and provide a computable formula for the Jacobian of the primal solution mapping $Y$. The key to achieve such generalization is to leverage advances in implicit mapping theory \cite{nghia2025geometric,benko2024primal,hang2025smoothness} and, especially, \cite{hang2024role}. 

The hyper-gradient method is widely studied for special cases of \eqref{prob:UL}; see \cite[Chapter 6]{dempe2002foundations} for a classic exposition. The first complexity results for deterministic and stochastic bilevel programs appear in \cite{ghadimi2018approximation} and rely on a descent-type algorithm with further refinements in \cite{ji2021bilevel}. However, these articles only consider the case of unconstrained lower-level problems with a strongly convex objective function.

Other recent advances in efficiently estimating the hyper-gradient include \cite{khanduri2023linearly,chen2022single,khanduri2025doubly} and focus specifically on the case when only stochastic information is available, which is then leveraged to obtain nonasymptotic guarantees of achieving a small hyper-gradient in expectation. The methods in \cite{kornowski2024first,liu2022bome,kwon2023fully} are based on the so-called value function approach and a penalty reformulation of the bilevel problem, and are able to avoid the need for second-order information about the lower-level objective. Still, the constraint structure remains simple: \cite{chen2022single,liu2022bome,kwon2023fully} deal with unconstrained lower-level problems, while \cite{kornowski2024first,khanduri2023linearly,khanduri2025doubly} address linear constraints. In all these cases, the lower-level objective function is strongly convex.

Going beyond strongly convex lower-level objective functions, \cite{kwon2023penalty,shen2023penalty} extend methods based on penalty reformulations to handle possibly nonconvex lower-level objective functions, but they assume a growth condition at the (global) minimizers of the lower-level problem for every $x$, smooth lower-level objective, and lower-level constraint sets that are independent of $x$. The recent work \cite{chen2025set} introduces the concept of set smoothness to construct an algorithm to find approximately Clarke stationary points of the hyper-objective, but the analysis is limited to unconstrained lower-level problems. It is also possible to reformulate the problem in terms of the Moreau envelope of the lower-level minimum value to obtain stationary points under the assumption that the lower-level objective is convex in $y$ and weakly convex in $x$. The approach can rely on a difference-of-convex algorithm \cite{gao2022value,gao2023moreau} or an alternating proximal gradient method \cite{liu2024moreau} to solve this reformulation. However, \cite{liu2024moreau} considers a lower-level feasible set that is independent of $x$, while \cite{gao2022value,gao2023moreau} assume that this set is convex jointly in $x$ and $y$ and can be expressed through smooth inequality constraints.

The seminal work in \cite{dempe2000bundle} addresses lower-level problems with a convex objective function and general convex inequality constraints by adding a norm-squared regularization to the lower-level objective function. In \cite{chen2024lower}, the authors use a similar regularization together with a relaxed reformulation based on Fenchel duality to develop an algorithm for solving a class of bilevel problems from hyperparameter optimization, though the article only shows convergence to a stationary point of the relaxed problem. We propose a novel, {\em double} regularization of the lower-level problem \eqref{prob:LL} that extends the work in \cite{dempe2000bundle}. In addition to the norm-squared term, we consider a further regularization built on Moreau envelopes. Moreau envelopes have already been used in bilevel optimization \cite{gao2022value,gao2023moreau,liu2024moreau}, but, to the best of our knowledge, not in the context of the hyper-gradient method. This double regularization enables us to analyze not only the primal solution mapping $Y$ in \eqref{prob:LL} (and the corresponding primal solution mapping of the regularized lower-level problem), but also the {\em primal-dual} solution mapping of the lower-level problem, which comes with the benefit of relaxed assumptions related to the rank of $\nabla_y G(x,y)$. 

Our algorithmic approach centers on computing approximately stationary points in terms of the Goldstein subdifferential of the regularized hyper-objective and showing convergence to stationary points of the actual problem in the sense of Clarke, Goldstein, or constrained extensions depending on the setting. At each iteration, such approximately stationary points are computed using a gradient sampling method (cf. \cite{burke2005robust,kiwiel2007convergence,burke2020gradient}). However, our results about the properties of the primal-dual solution mapping of the lower-level problem are flexible enough to accommodate other nonconvex nonsmooth optimization methods.

In summary, the article makes four main contributions. First, we introduce a double regularization of the lower-level problem \eqref{prob:LL} that yields a globally piecewise smooth primal-dual solution mapping merely under the assumptions stated after the problem formulation in \eqref{prob:UL}, with a computable Jacobian available under mild additional conditions. Second, we establish conditions under which the primal-dual solution mapping of \eqref{prob:LL} is single-valued and $C^1$ around a point, and provide a computable formula for the Jacobian of $Y$ in that case. Third, we prove the convergence of approximately stationary points of the hyper-objective of the regularized problem to stationary points of the hyper-objective of the actual problem. Fourth, we extend the method for bilevel optimization in \cite{dempe2000bundle} by including lower-level problems that are not necessarily stated in terms of smooth inequality constraints, and relaxing the assumptions necessary for the method to work. In particular, the double regularization avoids the constant rank assumption (CR) of \cite{dempe2000bundle}, while the sampling scheme allows the proposed algorithms to work without the so-called (NE) assumption in \cite{dempe2000bundle}, which relates to strict complementary slackness. 

The paper is organized as follows. Section 2 summarizes definitions and notation. In Section 3, we introduce a regularization of the lower-level objective function and analyze properties of the solution mappings of the regularized and actual lower-level problems. In Section 4, we establish the convergence of primal-dual solutions of the regularized lower-level problem to solutions of \eqref{prob:LL}. Section 5 constructs algorithms for the bilevel problem \eqref{prob:UL}, with and without upper-level constraints, and prove their convergence using results from prior sections. Section 6 contains numerical experiments.

\section{Notation and Preliminaries}

Largely following \cite{royset2021optimization,VaAn}, we use $|\cdot|$ for the Euclidean norm in $\reals^n$ and write $\ball(x,\rho)=\{x'\in \reals^n\mid |x'-x|\leq \rho\}$ for closed balls. For any subsequence $N\subset\nats=\{1,2,\dots\}$, we write $x^\nu\Nto x$ to indicate convergence of $\{x^\nu\mid \nu\in N\}$ to $x$. Given $C\subset\reals^n$, $\iota_C(x)=0$ if $x\in C$ and $\iota_C(x)=\infty$ otherwise. We use the notation $\nt C$, $\ri C$, $\pos C$, and $\con C$ to indicate the interior, relative interior, positive hull, and convex hull of $C$, respectively. The polar cone to $C$ is $C^*=\{x'\in \reals^n\mid \langle x, x'\rangle\leq 0~ \forall x\in C\}$ and, when $C$ is a linear subspace, its orthogonal subspace is $C^\perp =\{x'\in \reals^n\mid \langle x, x'\rangle=0~\forall x\in C\}$. The sets $C^\nu$ converge to $C$ in the sense of Painlevé-Kuratowski, written as $C^\nu\sto C$, if $C$ is closed and the point-to-set distance $\dist(x,C^\nu)\to \dist(x,C)$ for all $x\in \reals^n$; recall that $\dist(x,C)=\inf_{x'\in C}|x-x'|$. Given vectors $x,y\in \reals^n$, we write $x\geq y$ if the inequality holds component-wise.

The domain of $f:\reals^n\to \Reals$ is $\dom f=\{x\in \reals^n\mid f(x)<\infty\}$ and the epigraph is $\epi f =\{(x,\alpha)\mid f(x)\leq \alpha\}$. The function is proper if $\dom f\neq \emptyset$ and $f(x)\neq -\infty$ for all $x\in \reals^n$. It is lower semicontinuous (lsc) if $\epi f$ is a closed subset of $\reals^n\times \reals$. It is nondecreasing if $x\geq y$ implies $f(x)\geq f(y)$. If $f$ is proper lsc and $\lambda>0$, then the Moreau envelope $e_{\lambda}f$ of $f$ is defined as
\begin{equation}\label{def:moreau-env}
    e_{\lambda }f(x) = \ninf_w \big\{f(w)+\tfrac{1}{2\lambda}|w-x|^2\big\}. 
\end{equation}
If $f$ is also convex, then $e_{\lambda}f$ is convex and $C^1$, with $\nabla e_{\lambda}f(x) = (x-w_\lambda)/\lambda$, 
where $w_\lambda$ is the unique minimizer attaining the infimum in \eqref{def:moreau-env}. A sequence of functions $f^\nu$ is said to epi-converge to a function $f$, denoted by $f^\nu \eto f$, if $\epi f^\nu\sto \epi f$.

For a nonempty set $C\subset \reals^n$, the tangent cone to $C$ at $\bar{x}\in C$ is 
\begin{equation*}
    T_C(\bar{x})=\big\{w\in \reals^n ~\big|~ \exists\{x^\nu\}\subset C, \tau^\nu\searrow0 \text{ such that } (x^\nu-\bar{x})/\tau^\nu \to w  \big\}.
\end{equation*}
The regular normal cone to $C$ at $\bar{x}$ is $\widehat{N}_C(\bar{x})=T_C(\bar{x})^*$ and the normal cone to $C$ at $\bar{x}$, denoted by $N_C(\bar{x})$, is the set of $u\in \reals^n$ such that there exist $x^\nu\in C\to \bar{x}$ and $u^\nu\in \widehat{N}_C(x^\nu)\to u$. For a function $f:\reals^n\to \Reals$ and a point $x$ at which $f(x)$ is finite, a vector $u\in \reals^n$ is called a subgradient of $f$ at $x$ when $(u,-1)\in N_{\epi f}(x,f(x))$. The set of all subgradients of $f$ at $x$ is the subdifferential $\partial f(x)$. If $f$ is also lsc, we say that $u$ is a horizon subgradient of $f$ at $x$, denoted $u\in \partial^\infty f(x)$, if $(u,0)\in N_{\epi f}(x,f(x))$. The critical cone of $f$ at $x$ for $u\in \partial f(x)$ is given by $K_f(x,u) = N_{\partial f(x)}(u)$. If $f$ is locally Lipschitz continuous, then its Clarke subdifferential at $x$ is 
\begin{equation*}
    \bar{\partial }f(x) = \con \partial f(x) = \con \lset u \mset \exists \{x^\nu\} \in D \text{ such that } x^\nu\to x, \nabla f(x^\nu)\to u\rset,
\end{equation*}
where $D\subset \reals^n$ is the set of points at which $f$ is differentiable, which has full measure by Rademacher's theorem. Given $\epsilon> 0$, the Goldstein subdifferential of $f$ at $x$ is $\partial_\epsilon f(x)=\con \{\bar{\partial}f(z)\mid z\in \ball(x,\epsilon)\}$.  For $w\in\reals^m$ and a $C^2$ mapping $F:\reals^n\to \reals^m$, with $F(x) = (f_1(x), \dots, f_m(x))$, we use the notation $\nabla_{xx}^2F(x)w=\sum_{i=1}^m\nabla_{xx}^2 f_i(x)w_i$ to indicate the Hessian of $x\mapsto \langle F(x), w\rangle$.

For a matrix $A\in \reals^{m\times n}$, we define $\ker A=\{x\in \reals^n\mid Ax=0\}$ and $\spn A = \{y\in \reals^m\mid \exists x\in \reals^n, Ax=y\}$. We denote by $\bbI_n$ the $n\times n$-identity matrix. 

A function $f:\reals^n\to \Reals$ is epi-polyhedral if its epigraph is a polyhedral set. Then there are finite index sets $I$ and $J$ as well as $\{a^j\in\reals^n, b^i\in\reals^n, \alpha_j\in\reals, \beta_i\in\reals, j\in J, i\in I\}$ such that 
\begin{equation*}
    f(x)=\max_{j\in J}\{\langle a^j,x\rangle-\alpha_j\}+\iota_{\dom f}(x), ~\text{with}~\dom f=\{x\in \reals^n\mid \langle b^i,x\rangle-\beta_i\leq 0\ \forall i\in I\}.
\end{equation*}
The active index sets at $x\in \dom f$ are $I(x)=\{i\in I\mid \langle b^i,x\rangle-\beta_i= 0\}$ and $J(x)=\{j\in J\mid f(x)=\langle a^j,x\rangle-\alpha_j\}$. Then, by \cite[Proposition 3.3]{mordukhovich2016generalized}, one has $u\in \partial f(x)$ if and only if there exists a representation
\begin{equation}\label{decomposition}
    u=\sum_{j\in J(x)}\sigma_j a^j+\sum_{i\in I(x)}\tau_ib^i
\end{equation}
with $\sigma_j,\tau_i\geq0$ and $\sum_{j\in J(x)}\sigma_j=1$. Moreover, $w\in K_f(x,u)$ if and only if
\begin{equation}\label{sys:kh}
\begin{aligned}
    \langle a^i-a^j,w\rangle& =0 \quad\quad i,j\in J^+(x,\sigma) \\
    \langle a^i-a^j,w\rangle&\leq 0 \quad\quad  i\in J(x)\backslash J^+(x,\sigma), ~j\in J^+(x,\sigma) \\
    \langle b^i,w\rangle&=0 \quad\quad  i\in I^+(x,\tau) \\
    \langle b^i,w\rangle&\leq 0 \quad\quad  i\in I(x)\backslash I^+(x,\tau),
\end{aligned}
\end{equation}
where $I^+(x,\tau)=\{i\in I(x)\mid \tau_i>0\}$ and $J^+(x,\sigma)=\{j\in J(x)\mid \sigma_j>0\}$. 

From \cite[Corollary 3.1]{hang2024role}, we obtain a relationship between an epi-polyhedral function $f$ and its conjugate $f^*$: $u \in \ri(\partial f(x))$ if and only $x\in \ri(\partial f^*(u))$. Moreover, for $u\in \partial f(x)$, one has
\begin{equation}\label{eq:polar}
    K_f(x,u) = (K_{f^*}(u,x))^*.
\end{equation}

The graph of a set-valued mapping $F:\reals^n\rightrightarrows\reals^m$ is $\gph F = \{(x,y)\mid y\in F(x)\}$. For $(x,y)\in \gph F$, we say that $F$ has a single-valued localization around $x$ for $y$ if there are neighborhoods $U$ and $V$ of $x$ and $y$ such that the mapping $x\mapsto F(x)\cap V$ is single-valued when restricted to $U$.
The following theorem, foundational for our development, describes the conditions under which the solution mapping of a class of generalized equations is $C^1$.
\begin{theorem}\label{thm:hang} {\rm (\cite[Theorem 5.3]{hang2024role}).} 
    For $C^1$ mapping $\psi:\reals^n\to \reals^n$ and epi-polyhedral function $f:\reals^n\to \Reals$, define the set-valued mapping
    \[
    s(u)=\{x\in \reals^n\mid u\in \psi(x)+\partial f(x)\}, ~~~~u\in\reals^n.
    \]
    Suppose that $\bar{x}$ satisfies $-\psi(\bar{x})\in \ri(\partial f(\bar{x}))$. Then $\overline{K}=K_f(\bar{x},-\psi(\bar{x}))$ is a linear subspace and $s$ has a Lipschitz continuous single-valued localization (which we also call $s$) around $0\in \reals^n$ for $\bar{x}$ if and only if
    \begin{equation}\label{cond:b}
        \{w\in \reals^n\mid \nabla\psi(\bar{x})^\top w\in \overline{K}^\perp\}\cap \overline{K}=\{0\}.
    \end{equation}
    In this case, $s$ is $C^1$ in a neighborhood of 0 and 
    \begin{equation*}
        \nabla s(u) = B\left(B^\top \nabla \psi\big(s(u)\big)B\right)^{-1}B^\top
    \end{equation*}
    for all $u$ sufficiently close to 0, where $B$ is a matrix whose columns form a basis of $\overline{K}$.
\end{theorem}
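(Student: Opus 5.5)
The natural route for Theorem~\ref{thm:hang} is to reduce the generalized equation $u\in\psi(x)+\partial f(x)$ near $\bar{x}$ to a smooth equation on an affine manifold. This uses the polyhedral structure of $f$ together with the relative-interior (nondegeneracy) hypothesis, after which the classical implicit function theorem applies. Write $\bar{u}=-\psi(\bar{x})$.

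\textbf{Step 1: $\overline{K}$ is a subspace.} Since $\bar{u}\in\ri(\partial f(\bar{x}))$, the normal cone $N_{\partial f(\bar{x})}(\bar{u})$ is the orthogonal complement of the affine hull of $\partial f(\bar{x})$. Hence $\overline{K}$ is a linear subspace. Concretely, in \eqref{sys:kh} one may choose $\sigma,\tau$ with all active multipliers positive, so that only equalities remain.

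\textbf{Step 2: localization of $\gph\partial f$.} Polyhedral subdifferentials have locally conic graphs. Near $(\bar{x},\bar{u})$, the graph $\gph\partial f$ coincides with $(\bar{x},\bar{u})+\gph N_{\overline{K}}$ (the reduction lemma for polyhedral functions). This needs checking via \eqref{decomposition}: perturbations $\bar{x}+w$ keep the active sets $I(\bar{x})$, $J(\bar{x})$ only if $w\in\overline{K}$, and $\partial f(\bar{x}+w)$ then equals $\partial f(\bar{x})$. The ri-condition ensures that nearby subgradients lie in the affine hull $\bar{u}+\overline{K}^\perp$. I expect this to be the main obstacle, in particular ruling out nearby points $x\notin\bar{x}+\overline{K}$ having subgradients close to $\bar{u}$. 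One may also invoke the duality $\bar{u}\in\ri\partial f(\bar{x})\iff\bar{x}\in\ri\partial f^*(\bar{u})$ together with \eqref{eq:polar}, applied symmetrically.

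\textbf{Step 3: reduced equation.} With Step 2, $u\in\psi(x)+\partial f(x)$ for $(x,u)$ near $(\bar{x},0)$ is equivalent to the following two conditions:
\[
x=\bar{x}+Bz \quad\text{for some } z\in\reals^k,
\qquad\text{and}\qquad
B^\top\bigl(u-\psi(\bar{x}+Bz)-\bar{u}\bigr)=0 .
\]
The first says $x-\bar{x}\in\overline{K}$; the second says $u-\psi(x)-\bar{u}\in\overline{K}^\perp$. Define $\Phi(z,u)=B^\top(\psi(\bar{x}+Bz)+\bar{u}-u)$, a $C^1$ map $\reals^k\times\reals^n\to\reals^k$ with $\nabla_z\Phi(0,0)=B^\top\nabla\psi(\bar{x})B$.

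Condition \eqref{cond:b} says exactly that this matrix is nonsingular. Indeed, $B^\top\nabla\psi(\bar{x})Bz=0$ means that $w=Bz\in\overline{K}$ satisfies $\nabla\psi(\bar{x})w\in\overline{K}^\perp$. The transpose in \eqref{cond:b} should be matched to the paper's Jacobian convention; with either convention the resulting $k\times k$ matrix is invertible iff its transpose is.

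\textbf{Step 4: sufficiency.} The implicit function theorem then gives a $C^1$ map $u\mapsto z(u)$, so $s(u)=\bar{x}+Bz(u)$ is a $C^1$, hence Lipschitz, single-valued localization. Differentiating $B^\top\bigl(\psi(s(u))-u\bigr)=\text{const}$ yields
\[
B^\top\nabla\psi(s(u))B\,z'(u)=B^\top .
\]
This gives the stated formula $\nabla s(u)=B\bigl(B^\top\nabla\psi(s(u))B\bigr)^{-1}B^\top$, with invertibility persisting near $0$ by continuity.

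\textbf{Step 5: necessity.} Suppose $0\neq w\in\overline{K}$ violates \eqref{cond:b}. Under the local reduction, $s$ near $\bar{x}$ is the solution map of $\Phi(z,u)=0$, and $u$ enters only through the term $-B^\top u$. A Lipschitz single-valued localization would make $z\mapsto B^\top\psi(\bar{x}+Bz)$ locally Lipschitz-invertible near $0$. Inverse-function results for $C^1$ maps (a Lipschitz inverse forces a nonsingular Jacobian, e.g.\ by Clarke's theorem or by differentiating along $w$) then give a contradiction.
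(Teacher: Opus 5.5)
The paper gives no proof of this statement; it imports it from \cite[Theorem 5.3]{hang2024role}. Your argument is correct and essentially complete. It takes a different, more elementary route than that source, which works in the coderivative framework of second-order variational analysis (the paper invokes the companion criterion \cite[Theorem 5.2]{hang2024role} in the proof of Theorem~\ref{thm:lipnor}). The adjoint form $\nabla\psi(\bar x)^\top w\in\overline{K}^\perp$ in \eqref{cond:b} is the signature of that route: under the relative interior condition, $D^*(\partial f)(\bar x,\bar u)(w)=\overline{K}^\perp$ for $w\in\overline{K}$ and $D^*(\partial f)(\bar x,\bar u)(w)=\emptyset$ otherwise.

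You instead flatten the problem. Once $\gph\partial f$ coincides near $(\bar x,\bar u)$ with $(\bar x,\bar u)+\overline{K}\times\overline{K}^\perp$, the generalized equation becomes a smooth equation on the affine manifold $\bar x+\overline{K}$. The classical implicit function theorem then gives existence, uniqueness, $C^1$-smoothness and the Jacobian formula at once. Necessity reduces to the elementary fact that a $C^1$ map with a Lipschitz local inverse has a nonsingular derivative.

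What each route buys: yours is transparent and shows exactly why the relative interior condition yields differentiability; it also gives $C^k$ solutions when $\psi$ is $C^k$. The coderivative route is more general. It still characterizes Lipschitz single-valued localizations when the relative interior condition fails and $\gph\partial f$ is only a finite union of polyhedral pieces, which is the situation of \eqref{SSOC2} and Theorem~\ref{thm:lipnor}. Your flattening is in fact the device the paper itself uses in the proof of Theorem~\ref{thm:lip}, where $h$ is replaced by $\widehat h$ with affine subdifferential so that the relative interior condition holds automatically.

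Two minor repairs. First, the step you flag as the main obstacle closes quickly. For polyhedral $f$ and all small $w$, $\partial f(\bar x+w)$ is the (possibly empty) face of $\partial f(\bar x)$ on which $\langle\cdot,w\rangle$ attains its supremum. This face equals $\partial f(\bar x)$ if and only if $w\in\overline{K}$. The finitely many proper faces are closed and miss $\ri\partial f(\bar x)\ni\bar u$, so they stay a positive distance from $\bar u$.

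Second, in Step 5, Clarke's inverse function theorem is a sufficiency result and does not give necessity; use only the directional argument. Take $0\neq d$ with $B^\top\nabla\psi(\bar x)Bd=0$, and put $x_t=\bar x+tBd$ and $u_t=P(\psi(x_t)-\psi(\bar x))$, with $P$ the orthogonal projector onto $\overline{K}$. Then $x_t\in s(u_t)$ and $|u_t|=o(t)$, which contradicts $t|Bd|\le L|u_t|$.
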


For the {\em remainder of the paper}, we assume without loss of generality that the proper nondecreasing epi-polyhedral function $h:\reals^s\to \Reals$ in \eqref{prob:LL} is of the form 
\begin{equation}\label{def:h}
    h(z)=\max_{j\in J}\{\langle a^j,z\rangle-\alpha_j\}+\iota_{\dom h}(z),
\end{equation}
with $\dom h=\{z\in \reals^s\mid \langle b^i,z\rangle-\beta_i\leq 0\ \forall i\in I\}$, for some finite index sets $I$ and $J$ and $a^j,b^i\geq 0$, $\alpha_j,\beta_i\in \reals$.

\section{Properties of Solution Mappings}
 
As a stepping stone toward algorithms for the bilevel problem \eqref{prob:UL}, this section introduces a regularized approximation of the lower-level problem \eqref{prob:LL}. This allows us to circumvent the troublesome set-valued mapping $Y$ and focus on the desirable properties of primal {\em and} dual solutions of the approximation. We also return to the actual lower-level problem, i.e., \eqref{prob:LL}, and study the properties of its primal-dual solution mapping under additional assumptions, which provide the foundation for optimality conditions for the bilevel problem.

\subsection{Regularized Lower-Level Problem}

For $x\in \reals^n$ and $\alpha,\beta>0$, we define a doubly regularized lower-level problem and its argmin-mapping:
\begin{equation}\label{prob:LLr}
    Y_{\alpha,\beta}(x) = \nargmin_y g(x,y)+e_{\alpha }h(G(x,y)) + \frac{\beta}{2}|y|^2.
\end{equation}
As we see in the following proposition, a necessary and sufficient optimality condition for $y$ to be a minimizer in \eqref{prob:LLr} is that  
\begin{equation}\label{opt:ge}
    0\in \psi_x(y,p)+\partial\tilde{h}(y,p)
\end{equation}
for some multiplier vector $p\in \reals^s$, where 
\begin{equation}\label{for:psih}
\psi_x(y,p)=\begin{pmatrix}
        -\nabla_y g(x,y)-\nabla_y G(x,y)^\top p-\beta y\\
        -G(x,y) + \alpha p
    \end{pmatrix},\quad\quad \tilde{h}(y,p)=h^\ast(p). 
\end{equation}
Corresponding to the primal-dual solutions in \eqref{prob:LLr}, we denote by 
\begin{equation*}
        S_{\alpha,\beta}(x)=\lset(y,p)\mset y \text{ and }p \text{ satisfy } \eqref{opt:ge}\rset 
\end{equation*}
the solution mapping of the generalized equation \eqref{opt:ge} as a function of $x$.

\begin{proposition}\label{prop:lagrangian}
    Given $\alpha,\beta>0$ and $x\in \reals^n$, the sets $Y_{\alpha,\beta}(x)$ and $S_{\alpha,\beta}(x)$ are singletons. Moreover, 
    \[
    \{y\} = Y_{\alpha,\beta}(x)  ~~\Longleftrightarrow~~  \exists p\in \reals^s \text{ with } \{(y,p)\} = S_{\alpha,\beta}(x). 
    \]
\end{proposition}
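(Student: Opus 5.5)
The proof splits into two parts. First, the primal problem in \eqref{prob:LLr} has exactly one minimizer. Second, the generalized equation \eqref{opt:ge} characterizes that minimizer through a unique multiplier.

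\textbf{Step 1: the regularized objective is strongly convex.} Fix $x$ and set $\phi(y)=g(x,y)+e_{\alpha}h(G(x,y))+\frac{\beta}{2}|y|^2$. Since $h$ is proper, epi-polyhedral (hence lsc and convex) and nondecreasing, $e_\alpha h$ is finite, convex and $C^1$. It is also nondecreasing: if $z\ge z'$, then for any $w$ the point $w+(z-z')\ge w$ gives $h(w+z-z')\ge h(w)$. Composing a convex nondecreasing function with the componentwise convex map $G(x,\cdot)$ gives a convex function. Hence $\phi$ is finite, $C^1$ and $\beta$-strongly convex, so $Y_{\alpha,\beta}(x)=\{y\}$ is a singleton. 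The minimizer is characterized by
\[
0=\nabla_y g(x,y)+\nabla_y G(x,y)^\top \nabla e_\alpha h(G(x,y))+\beta y .
\]

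\textbf{Step 2: rewrite the multiplier condition.} Let $p=\nabla e_\alpha h(z)$ with $z=G(x,y)$. Writing $w_\alpha$ for the proximal point, we have $p=(z-w_\alpha)/\alpha$ with $p\in\partial h(w_\alpha)$, so $w_\alpha=z-\alpha p$. By conjugacy for proper lsc convex $h$,
\[
p\in\partial h(z-\alpha p)\iff z-\alpha p\in\partial h^*(p)\iff 0\in -G(x,y)+\alpha p+\partial h^*(p).
\]
Conversely, if $G(x,y)-\alpha p\in\partial h^*(p)$, then $w:=z-\alpha p$ satisfies $(z-w)/\alpha\in\partial h(w)$. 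This is exactly the optimality condition of the strongly convex prox problem, so $w=w_\alpha$ and $p=\nabla e_\alpha h(z)$.

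\textbf{Step 3: assemble the equivalence.} Since $\tilde h(y,p)=h^*(p)$ does not depend on $y$, we have $\partial\tilde h(y,p)=\{0\}\times\partial h^*(p)$. Therefore \eqref{opt:ge} holds exactly when both
\[
\nabla_y g+\nabla_y G^\top p+\beta y=0 \quad\text{and}\quad p=\nabla e_\alpha h(G(x,y)).
\]
Substituting the second relation into the first gives the stationarity condition of Step 1. This proves both directions: if $\{y\}=Y_{\alpha,\beta}(x)$, then $(y,\nabla e_\alpha h(G(x,y)))\in S_{\alpha,\beta}(x)$; and if $(y,p)\in S_{\alpha,\beta}(x)$, then $y$ is the minimizer.

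\textbf{Uniqueness of $S_{\alpha,\beta}(x)$.} Any $(y',p')\in S_{\alpha,\beta}(x)$ must have $y'=y$, because $y'$ minimizes the strongly convex $\phi$. Then $p'=\nabla e_\alpha h(G(x,y))$ is forced by Step 2. Existence of the pair comes from existence of the minimizer, so $S_{\alpha,\beta}(x)$ is a singleton and the stated equivalence holds.

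The main obstacle is the justification in Step 1 that $y\mapsto e_\alpha h(G(x,y))$ is convex and differentiable with the chain-rule gradient. This relies on monotonicity of $e_\alpha h$, which must be derived from monotonicity of $h$, and on finiteness of $e_\alpha h$ everywhere, which uses properness of $h$. The subdifferential identity $\partial\tilde h=\{0\}\times\partial h^*$ is routine because $h^*$ is proper lsc convex.
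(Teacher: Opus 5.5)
Your proof is correct, but it takes a different route from the paper's. The paper also starts from strong convexity, but then passes to duality. It forms the Rockafellian $\varphi(y,u)=g(x,y)+e_\alpha h(G(x,y)+u)+\frac{\beta}{2}|y|^2$ and its Lagrangian $L_{\alpha,\beta}$ in \eqref{eq:lagrangian}. It then invokes Royset--Wets optimality and saddle-point results: $y$ is optimal if and only if some $p$ gives $0\in\partial_y L_{\alpha,\beta}(y,p)$ and $0\in\partial_p(-L_{\alpha,\beta})(y,p)$, which is \eqref{opt:ge}. Uniqueness of the pair comes from strong convexity of $L_{\alpha,\beta}(\cdot,p)$ for $p\ge 0$ and strong concavity of $L_{\alpha,\beta}(y,\cdot)$.

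You stay on the primal side instead. You differentiate $e_\alpha h\circ G$ by the chain rule. Then, using the prox optimality condition and the inversion $p\in\partial h(v)\Leftrightarrow v\in\partial h^*(p)$, you show that $G(x,y)-\alpha p\in\partial h^*(p)$ holds exactly when $p=\nabla e_\alpha h(G(x,y))$. Uniqueness of $p$ is then read off an explicit formula rather than deduced from strong concavity.

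\textbf{What your route buys.} It is elementary and self-contained. It names the multiplier explicitly as $p=(G(x,y)-w_\alpha)/\alpha$, which is exactly how the paper recovers $p$ computationally in Section~5.1. Your translation argument for monotonicity of $e_\alpha h$ is also tighter than the paper's. The paper infers $z\ge w^\star$ from $h(z)\ge h(w^\star)$, which is the converse of monotonicity. That conclusion needs an extra justification, such as nonnegativity of subgradients of a nondecreasing convex function.

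\textbf{What the paper's route buys.} It makes the Lagrangian $L_{\alpha,\beta}$ and its strongly convex--concave saddle structure explicit, and these are reused later. Its second derivatives form the matrix $A$ in Theorem~\ref{thm:c1}. The description of $p$ as the unique maximizer of $L_{\alpha,\beta}(x,y,\cdot)$ drives the continuity arguments in Theorem~\ref{thm:lip} and Proposition~\ref{prop:convS}. It also sets up the parallel with the unregularized Lagrangian \eqref{eqn:LagrangianActual}.
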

\begin{proof}
    The objective function in \eqref{prob:LLr} is strongly convex in $y$. Indeed, for $z\in \reals^s$, the minimizer $w^\star$ of $h(w)+\frac{1}{2\alpha}|w-z|^2$ satisfies $h(z)\geq h(w^\star)$. Then, since $h$ is nondecreasing, we have that $z\geq w^\star$ and so
    \begin{equation*}
        \nabla e_{\alpha}h(z) =(z-w^\star)/\alpha\geq 0
    \end{equation*}
    proving that $e_{\alpha}h$ is nondecreasing. Since $e_{\alpha }h$ is also convex, the composition $e_{\alpha}h\circ G$ is convex and then the objective function in \eqref{prob:LLr} is strongly convex. 

    Turning to optimality conditions for \eqref{prob:LLr}, we use tools from \cite[Chapter 5]{royset2021optimization} and define the Rockafellian \cite[Proposition 5.16]{royset2021optimization} $\varphi(y,u)=g(x,y)+e_{\alpha}h(G(x,y)+u)+\frac{\beta}{2}|y|^2$. By \cite[Proposition 5.28]{royset2021optimization}, the Lagrangian associated with this Rockafellian is 
    \begin{equation*}
\begin{aligned}
    L_{\alpha,\beta}(y,p) =g(x,y)+\frac{\beta}{2}|y|^2+\langle G(x,y),p\rangle-h^*(p)-\frac{\alpha}{2}|p|^2.
\end{aligned}
\end{equation*}
    By \cite[Theorem 4.64, Theorem 4.75, and Proposition 5.36]{royset2021optimization}, combined with the convexity of the objective function of \eqref{prob:LLr}, $y\in Y_{\alpha,\beta}(x)$ if and only if there exists $p\in \reals^s$ such that $0\in \partial_y L_{\alpha,\beta}(y,p)$ and $0\in \partial_p(-L_{\alpha,\beta})(y,p)$. This results in the system
\begin{equation}\label{opt:ll}
    \begin{pmatrix}
        0\\
        0
    \end{pmatrix} \in \begin{pmatrix}
        -\nabla_y g(x,y)-\nabla_y G(x,y)^\top p-\beta y\\
        -G(x,y) + \alpha p
    \end{pmatrix} + \{0\}\times \partial h^\ast(p),
\end{equation}
    which is equivalent to \eqref{opt:ge}. Moreover, since $ L_{\alpha,\beta}(\cdot,p)$ is strongly convex for all $p\geq 0$ (note that $\dom h^*\subset [0,\infty)^s$, and so if $p\geq 0$ does not hold, then $ L_{\alpha,\beta}(y,p)=-\infty$ for all $y$) and $ L_{\alpha,\beta}(y,\cdot)$ is strongly concave for all $y\in \reals^m$, we conclude that there exists a unique pair $(y,p)$ that satisfies \eqref{opt:ll} and therefore \eqref{opt:ge}. With this, $S_{\alpha,\beta}(x)=\{(y,p)\}$ and $Y_{\alpha,\beta}(x)=\{y\}$, concluding the proof. 
\end{proof}

Since the proposition establishes that $S_{\alpha,\beta}$ is single-valued, we will treat it as such for the remainder of the paper. Proposition \ref{prop:lagrangian} holds even if the actual lower-level problem in \eqref{prob:LL} is infeasible.

\subsection{Local Properties}

In order to compute a descent direction for $x\mapsto f(x,Y_{\alpha,\beta}(x))$ at a point, we seek conditions ensuring smoothness at the point as well as a convenient formula for the Jacobian of $Y_{\alpha,\beta}$. 

In \eqref{for:psih}, $\psi_x$ is a $C^1$ mapping and $\tilde{h}$ is epi-polyhedral, since the conjugate of an epi-polyhderal function is also epi-polyhedral \cite[Theorem 11.14]{VaAn}. Then \eqref{opt:ge} fits the framework of Theorem \ref{thm:hang}.  

From the earlier discussion (see the proof of Proposition \ref{prop:lagrangian}), the Lagrangian associated with the regularized lower-level problem \eqref{prob:LLr} is
\begin{equation}\label{eq:lagrangian}
    L_{\alpha,\beta}(x,y,p) = g(x,y)+\frac{\beta}{2}|y|^2+\langle G(x,y),p\rangle -h^*(p)-\frac{\alpha}{2}|p|^2,
\end{equation}
where we highlight the dependence on $x$. With this notation, we are now ready to enunciate the main theorem of the section.
\begin{theorem}\label{thm:c1}{\rm (local solution mapping properties for regularized lower-level problem).}
    For $\alpha,\beta>0$ and $\bar x\in \reals^n$, suppose that $(\bar{y},\bar{p})=S_{\alpha,\beta}(\bar{x})$ and the relative interior condition $G(\bar{x},\bar{y})-\alpha \bar{p}\in \ri(\partial h^*(\bar{p}))$ is satisfied. Then $S_{\alpha,\beta}$ is $C^1$ in a neighborhood of $\bar{x}$. Moreover, we have that
    \begin{equation}\label{for:gradSr}
        \nabla S_{\alpha,\beta}(\bar{x})= -B(B^\top AB)^{-1}B^\top\begin{pmatrix}
            \nabla_x\nabla_y L_{\alpha,\beta}(\bar{x},\bar{y},\bar{p}) \\
            \nabla_xG(\bar{x},\bar{y})
        \end{pmatrix},
    \end{equation}
    where 
    $$A=\begin{pmatrix}
            \nabla_{yy}^2 L_{\alpha,\beta}(\bar{x},\bar{y},\bar{p}) ~~ & ~~\nabla_y G(\bar{x},\bar{y})^\top \\
                 \nabla_y G(\bar{x},\bar{y}) & -\alpha \bbI_s
            \end{pmatrix}$$  and $B$ is a matrix whose columns form a basis of $\reals^m \times K_{h^\ast}(\bar{p},G(\bar{x},\bar{y})-\alpha \bar{p})$.
\end{theorem}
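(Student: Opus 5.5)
We plan to apply Theorem \ref{thm:hang} to a generalized equation that carries $x$ as a parameter. Theorem \ref{thm:hang} is stated for a canonical perturbation $u$ rather than a parameter, so we first put \eqref{opt:ge} into that form. Set $v=(y,p)\in\reals^{m+s}$, $\psi(v)=\psi_{\bar x}(y,p)$ and $f(v)=\tilde h(y,p)=h^*(p)$; as noted above, $f$ is epi-polyhedral. Then
\[
\partial f(v)=\{0\}\times\partial h^*(p).
\]
For general $x$, \eqref{opt:ge} reads
\[
\psi_{\bar x}(v)-\psi_x(v)\in \psi_{\bar x}(v)+\partial f(v).
\]
To match the form $u\in\psi(v)+\partial f(v)$ exactly, we use the standard parametric extension of Theorem \ref{thm:hang}, i.e.\ the implicit-function version for $0\in\psi_x(v)+\partial f(v)$ with $\psi_x$ jointly $C^1$. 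We obtain it from Theorem \ref{thm:hang} by the usual device of the Robinson-type implicit function theorem. One option is to append $x$ as an extra variable, $\hat\psi(x,v)=(x,\psi_x(v))$ with $\hat f(x,v)=f(v)$, whose solution map at perturbation $(u_1,0)$ returns $(u_1,S(u_1))$. Alternatively, one can note that the solution mapping $s$ of Theorem \ref{thm:hang} is $C^1$ and that $S_{\alpha,\beta}(x)=s(\psi_{\bar x}(S(x))-\psi_x(S(x)))$, and close the argument with a contraction or implicit function argument. We intend the first, augmented-variable route. For that system the critical cone is $\reals^n\times\overline K$, it is again a subspace, and condition \eqref{cond:b} reduces to the one for $\psi_{\bar x}$.

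Next we verify the hypotheses at $\bar v=(\bar y,\bar p)$. The relative interior requirement $-\psi_{\bar x}(\bar v)\in\ri(\partial f(\bar v))$ is, componentwise, $0\in\ri\{0\}$ together with $G(\bar x,\bar y)-\alpha\bar p\in\ri(\partial h^*(\bar p))$, which is exactly the assumption. Theorem \ref{thm:hang} then gives that
\[
\overline K=K_f(\bar v,-\psi_{\bar x}(\bar v))=\reals^m\times K_{h^*}\big(\bar p,G(\bar x,\bar y)-\alpha\bar p\big)
\]
is a linear subspace. For the cone, the normal cone to $\{0\}\times C$ at a point $(0,c)$ is $\reals^m\times N_C(c)$.

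The key step, and the main obstacle, is condition \eqref{cond:b}. We compute $\nabla\psi_{\bar x}(\bar v)=-A$; this $A$ is symmetric. Let $w=(w_1,w_2)\in\overline K$ satisfy $Aw\in\overline K^\perp=\{0\}\times K^\perp$, writing $K=K_{h^*}(\cdot)$. Taking the inner product with $w\in\overline K$ gives
\[
0=\langle w,Aw\rangle=\langle w_1,\nabla^2_{yy}L_{\alpha,\beta}w_1\rangle+2\langle w_1,\nabla_yG^\top w_2\rangle-\alpha|w_2|^2 .
\]
This form is indefinite, so we cannot conclude directly. We instead use the first block row, which forces $\nabla^2_{yy}L\,w_1+\nabla_yG^\top w_2=0$. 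Hence $\langle w_1,\nabla_yG^\top w_2\rangle=-\langle w_1,\nabla^2_{yy}Lw_1\rangle$, and substituting,
\[
0=-\langle w_1,\nabla^2_{yy}Lw_1\rangle-\alpha|w_2|^2 .
\]
Since $\bar p\ge0$ (because $\dom h^*\subset[0,\infty)^s$) and $g(x,\cdot)$ and the $g_i(x,\cdot)$ are convex, $\nabla^2_{yy}L_{\alpha,\beta}\succeq\beta\bbI_m\succ0$. Both terms therefore vanish, giving $w=0$, so \eqref{cond:b} holds. This same argument shows that $B^\top AB$ is invertible: if $B^\top ABz=0$, take $w=Bz$.

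Finally, we derive the formula. For the augmented system, the Jacobian formula of Theorem \ref{thm:hang} gives $C^1$ smoothness near $\bar x$. Alternatively, differentiate the identity on the smooth manifold: locally $S_{\alpha,\beta}(x)-\bar v\in\overline K$ up to first order, and $\psi_x(S(x))\in-\partial f$ moves within $\overline K^\perp$. Projecting the linearized equation $\nabla_v\psi_{\bar x}\,dS+\nabla_x\psi_{\bar x}=k^\perp$ onto $\overline K$ with $dS=BM$ yields
\[
B^\top AB\,M=-B^\top\begin{pmatrix}\nabla_x\nabla_yL\\ \nabla_xG\end{pmatrix},
\]
using $\nabla_x\psi=-\big(\nabla_x\nabla_yL;\ \nabla_xG\big)$. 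This gives \eqref{for:gradSr}, with the sign fixed by $\nabla\psi=-A$ and $\nabla_x\psi=-(\cdot)$.
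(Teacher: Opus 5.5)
Your argument is correct. Its core, verifying \eqref{cond:b} by positive definiteness, is the same as the paper's: you test $\langle w,Aw\rangle$ after substituting the first block row, while the paper eliminates $w$ and uses that $\nabla_yG\,M^{-1}\nabla_yG^\top+\alpha\bbI_s$ is positive definite.

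The genuine difference is how you handle the parameter $x$. The paper applies Theorem~\ref{thm:hang} only to the frozen system $u\in\psi_{\bar x}(v)+\partial\tilde h(v)$. It then rewrites the equation at a nearby $x$ as the canonical perturbation $u=q(x,v)=\psi_{\bar x}(v)-\psi_x(v)$, and runs the classical implicit function theorem on $\Phi(x,v)=v-s(q(x,v))$, carrying a remainder term $e$. You instead absorb $x$ into the unknown, so a single application of Theorem~\ref{thm:hang} gives both smoothness and the Jacobian. This saves the second implicit-function step and the remainder bookkeeping.

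Three small repairs make your version airtight.
\begin{itemize}
\item Theorem~\ref{thm:hang} is centered at perturbation $0$, so use $\hat\psi(x,v)=(x-\bar x,\psi_x(v))$. With your unshifted $\hat\psi$, the relative interior condition would demand $-\bar x\in\{0\}$.
\item Read the formula off Theorem~\ref{thm:hang} directly instead of using the ``smooth manifold'' differentiation. That differentiation is heuristic unless you cite the local reduction of $\gph\partial f$ near $(\bar v,-\psi_{\bar x}(\bar v))$ to a translate of $\overline K\times\overline K^\perp$. Directly: with $\hat B=\mathrm{diag}(\bbI_n,B)$, the matrix $\hat B^\top\nabla\hat\psi\hat B$ is block lower-triangular. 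The $(2,1)$ block of $\hat B(\hat B^\top\nabla\hat\psi\hat B)^{-1}\hat B^\top$ is $-B(B^\top\nabla_v\psi B)^{-1}B^\top\nabla_x\psi$. Inserting $\nabla_v\psi=-A$ and $\nabla_x\psi=-\begin{pmatrix}\nabla_x\nabla_yL_{\alpha,\beta}\\ \nabla_xG\end{pmatrix}$ gives exactly \eqref{for:gradSr}.
\item Say explicitly why the localization is $S_{\alpha,\beta}$ itself. By Proposition~\ref{prop:lagrangian}, $\hat s(x-\bar x,0)=\{(x,S_{\alpha,\beta}(x))\}$ is a singleton, and the localization is nonempty-valued near $0$, so the two coincide.
\end{itemize}

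Finally, your sign bookkeeping $\nabla\psi_{\bar x}(\bar v)=-A$ is the accurate one. The paper's text writes $A$, which is harmless for \eqref{cond:b} but matters for the sign in \eqref{for:gradSr}.
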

\begin{proof} Since $G(\bar{x},\bar{y})-\alpha \bar{p}\in \ri(\partial h^*(\bar{p}))$, then $-\psi_{\bar{x}}(\bar{y},\bar{p})\in \ri(\partial\tilde{h}(\bar{y},\bar{p}))$. Therefore, the relative interior condition in Theorem \ref{thm:hang} is satisfied. We will prove that condition \eqref{cond:b} of the theorem is also satisfied. Since $\partial \tilde{h}(\bar{y},\bar{p})= \{0\}\times \partial h^*(\bar{p})$, by \cite[Proposition 4.44]{royset2021optimization} we have that for all $(\lambda_1,\lambda_2)\in \partial \tilde{h}(\bar{y},\bar{p})$
\begin{equation*}
    N_{\partial \tilde{h}(\bar{y},\bar{p})}(\lambda_1,\lambda_2) = \reals^m \times N_{\partial h^*(\bar{p})}(\lambda_2).
\end{equation*}
It follows by definition of the critical cone that 
\begin{equation}\label{eq:criticalcone}
    K_{\tilde{h}}\big((\bar{y},\bar{p}),(-\nabla_yg(\bar{x},\bar{y})-\nabla_yG(\bar{x},\bar{y})^\top \bar{p}-\beta \bar{y},G(\bar{x},\bar{y})-\alpha \bar{p})\big)= \reals^m \times K_{h^\ast}(\bar{p},G(\bar{x},\bar{y})-\alpha \bar{p}).
\end{equation}
Let $(w,z)\in K=\reals^m \times K_{h^\ast}(\bar{p},G(\bar{x},\bar{y})-\alpha \bar{p})$, that is, $w\in \reals^m$ and $z\in \overline{K}=K_{h^\ast}(\bar{p},G(\bar{x},\bar{y})-\alpha \bar{p})$. Condition \eqref{cond:b} as applied to \eqref{opt:ge} amounts to 
\begin{equation*}
    (w,z)\in K ~~\text{ and } ~~\nabla \psi_{\bar{x}}(\bar{y},\bar{p})(w,z) \in K^\perp  \quad \Longrightarrow \quad w=0, ~z=0.
\end{equation*}
Noting that $\nabla\psi_{\bar{x}}(\bar{y},\bar{p})=A$ and $K^\perp=\{0\}\times \overline{K}^\perp$, we obtain that $\nabla \psi_{\bar{x}}(\bar{y},\bar{p})(w,z) \in K^\perp$ is equivalent to
\begin{equation}\label{cond:ssoc}
    \begin{matrix}
        -\nabla_y G(\bar{x},\bar{y})^\top z=Mw, \\
        -\nabla_yG(\bar{x},\bar{y})w+\alpha z\in \overline{K}^\perp,
    \end{matrix}
\end{equation}
with $M=\beta \bbI_m+\nabla_{yy}^2G(\bar{x},\bar{y})\bar{p}+\nabla_{yy}^2g(\bar{x},\bar{y})$. 

We know that $\nabla_{yy}^2g_i(\bar{x},\bar{y})$ is positive semidefinite for every $i$ because $g_i$ is convex in $y$ and the same is true for $\nabla_{yy}^2g(\bar{x},\bar{y})$. Moreover, since $h$ is nondecreasing and $\bar{p}\in \partial h(G(\bar{x},\bar{y})-\alpha \bar{p})$, one has $\bar{p}\geq 0$. Therefore, the matrix $M$ is positive definite. By taking inverse and replacing $w$ in the inclusion on the second line of \eqref{cond:ssoc}, we obtain
\begin{equation*}
    \big (\nabla_yG(\bar{x},\bar{y})M^{-1}\nabla_y G(\bar{x},\bar{y})^\top+\alpha \bbI_s\big ) z \in \overline{K}^\perp,
\end{equation*}
which, since $z\in \overline{K}$, implies
\begin{equation*}
    \big\langle z, \big (\nabla_yG(\bar{x},\bar{y})M^{-1}\nabla_y G(\bar{x},\bar{y})^\top+\alpha \bbI_s\big ) z\big\rangle =0.
\end{equation*}
The matrix in the equality above is positive definite since $M$ has that property. Then the equality is only satisfied if $z=0$. By the positive definiteness of $M$, one has $w=0$. Therefore, the condition \eqref{cond:b} is satisfied.

Theorem \ref{thm:hang} tells us that the solution mapping of \eqref{opt:ge} (as a function of the left hand side) admits a single-valued $C^1$ localization around 0; this implies that the solution mapping itself is $C^1$ since we already know it is single-valued. Specifically, for $(q_1, q_2)$ sufficiently close to 0, one has: 
\begin{equation*}
\text{$(y',p')$ ~solves }~~ 
    \begin{pmatrix}
        q_1\\
        q_2
    \end{pmatrix} \in \begin{pmatrix}
        -\nabla_yg(\bar{x},y)-\nabla_y G(\bar{x},y)^\top p-\beta y\\
        -G(\bar{x},y) + \alpha p
    \end{pmatrix} + \{0\}\times \partial h^\ast(p)
\end{equation*}
if and only if
\begin{equation*}
    \begin{pmatrix}
        y'\\
        p'
    \end{pmatrix} = \begin{pmatrix}
        \bar{y} \\
        \bar{p}
    \end{pmatrix} +B(B^\top AB)^{-1}B^\top \begin{pmatrix}
        q_1 \\
        q_2
    \end{pmatrix} + e(q_1,q_2),
\end{equation*}
where the term $e(q_1,q_2)$ contains the second-order error. In particular, it follows from elementary calculus that the mapping $e:\reals^m\times\reals^s\to \reals$ satisfies $e(q_1,q_2)=o(|(q_1,q_2)|)$ whenever $q_1,q_2\to 0$, and therefore $\nabla e(0,0)=(0,0)$.
\par
Let us now consider $x'$ sufficiently close to $\bar{x}$. One has by straightforward algebra that $(y',p')$ is the solution of \eqref{opt:ge} at $x'$ if and only if
\begin{equation}\label{eq:q}
\begin{aligned}
    \begin{pmatrix}
        -\nabla_yg(\bar{x},y')-\nabla_yG(\bar{x},y')^\top p'+\nabla_yg(x',y')+\nabla_yG(x',y')^\top p'\\
        -G(\bar{x},y')+G(x',y')
    \end{pmatrix} &\in \\
    \begin{pmatrix}
        -\nabla_yg(\bar{x},y')-\nabla_y G(\bar{x},y')^\top p'-\beta y'\\
        -G(\bar{x},y') + \alpha p'
    \end{pmatrix} &+ \{0\}\times \partial h^\ast(p').
\end{aligned}
\end{equation}
Define the mapping $q:\reals^n\times \reals^m\times \reals^s\to \reals^m\times \reals^s$ by
\begin{equation*}
    q(x,y,p) = \begin{pmatrix}
        -\nabla_yg(\bar{x},y)-\nabla_yG(\bar{x},y)^\top p+\nabla_yg(x,y)+\nabla_yG(x,y)^\top p\\
        -G(\bar{x},y)+G(x,y)
    \end{pmatrix}.
\end{equation*}
The mapping $q$ is $C^1$ and thus continuous. By the previous discussion, \eqref{eq:q} implies
\begin{equation*}
     \begin{pmatrix}
        0\\
        0
    \end{pmatrix} = \begin{pmatrix}
        y'-\bar{y} \\
        p'-\bar{p}
    \end{pmatrix} -B(B^\top AB)^{-1}B^\top q(x',y',p') + e(q(x',y',p')).
\end{equation*}
Note that we have
\begin{equation*}
    \begin{aligned}
        &\lim_{(x',y',p')\to (\bar{x},\bar{y},\bar{p})} \frac{|e(q(x',y',p'))|}{|(x',y',p')-(\bar{x},\bar{y},\bar{p})|} =\\
        & = \lim_{(x',y',p')\to (\bar{x},\bar{y},\bar{p})} \frac{|q(x',y',p')|}{|(x',y',p')-(\bar{x},\bar{y},\bar{p})|}\frac{|e(q(x',y',p'))|}{|q(x',y',p')|} \\
        & \leq \kappa\cdot\lim_{(x',y',p')\to (\bar{x},\bar{y},\bar{p})} \frac{|e(q(x',y',p'))|}{|q(x',y',p')|}=0,
    \end{aligned}
\end{equation*}
where $\kappa$ is the Lipschitz constant of $q$ in a neighborhood around $(\bar{x},\bar{y},\bar{p})$. Therefore,
$$e(q(x',y',p'))=o(|(x',y',p')-(\bar{x},\bar{y},\bar{p})|)\  \text{ when } (x',y',p')\to (\bar{x},\bar{y},\bar{p})$$  and then $\nabla e(q_1(\bar{x},\bar{y},\bar{p}),q_2(\bar{x},\bar{y},\bar{p}))=0$.  
Recall that $(y',p')= S_{\alpha,\beta}(x')$ and define 
\begin{equation*}\Phi(x,y,p)=\begin{pmatrix}
        y-\bar{y} \\
        p-\bar{p}
    \end{pmatrix} -B(B^\top AB)^{-1}B^\top q(x,y,p) + e(q(x,y,p)).
\end{equation*}
Since $\Phi(x',y',p')=0$ for any $x'$ sufficiently close to $\bar{x}$, we can study the mapping $S_{\alpha,\beta}$ using the implicit function theorem. Denoting by $\nabla_{y,p}\Phi(\bar{x},\bar{y},\bar{p})$ the Jacobian of $\Phi(\bar{x},\cdot,\cdot)$ at $(\bar{y},\bar{p})$, we have
\begin{equation*}
    \nabla_{y,p}\Phi(\bar{x},\bar{y},\bar{p}) = \bbI_{m+s}+\nabla_{y,p}e\circ q(\bar{x},\bar{y},\bar{p}) -B(B^\top AB)^{-1}B^\top \nabla_{y,p} q(\bar{x},\bar{y},\bar{p})  =\bbI_{m+s},
\end{equation*}
which is invertible, and
\begin{equation*}
\begin{aligned}
    \nabla_x\Phi(\bar{x},\bar{y},\bar{p}) & =-B(B^\top AB)^{-1}B^\top \nabla_x q(\bar{x},\bar{y},\bar{p})+\nabla_x e(q(\bar{x},\bar{y},\bar{p})) \\
    &=-B(B^\top AB)^{-1}B^\top  \begin{pmatrix}
    \nabla_x\big(\nabla_yg(\bar{x},\bar{y})+\nabla_yG(\bar{x},\bar{y})^\top\bar{p}\big)\\
        \nabla_xG(\bar{x},\bar{y})
    \end{pmatrix}.
\end{aligned}
\end{equation*}
Thus, by the implicit function theorem, $S_{\alpha,\beta}$ is $C^1$ around $\bar{x}$ and the formula for $\nabla S_{\alpha,\beta}(\bar{x})$ in \eqref{for:gradSr} holds. 
\end{proof}
\par
A parallel result ensures that the primal-dual solution mapping of \eqref{prob:LL} is single-valued and $C^1$ around a point $x$. However, it requires an additional assumption at $x$, related to (local) strong convexity and concavity of a Lagrangian of \eqref{prob:LL}. In preparation for such a result, we say that the basic qualification holds at $y$ for $x$ when 
\begin{equation}\label{CQ:llnor}
    u\in \partial^{\infty}h(G(x,y)) \ \text{ and }\ \nabla_yG(x,y)^\top u \quad \Longrightarrow \quad u=0.
\end{equation}
By \cite[Theorem 4.64]{royset2021optimization}, under \eqref{CQ:llnor}, we have that $y\in Y(x)$ if and only if there exists $p\in \reals^s$ such that
\begin{equation}\label{opt:llnor}
    \begin{pmatrix}
        0\\
        0
    \end{pmatrix} \in \begin{pmatrix}
        -\nabla_yg(x,y)-\nabla_y G(x,y)^\top p\\
        -G(x,y)
    \end{pmatrix} + \{0\}\times \partial h^\ast(p).
\end{equation}
Define 
\begin{equation*}
S(x)=\{(y,p)\mid y \text{ and }p \text{ satisfy } \eqref{opt:llnor} \text{ at }x\}. 
\end{equation*}
Unlike $S_{\alpha,\beta}$, $S$ is not necessarily a single-valued mapping. This is due to the fact that a Lagrangian of \eqref{prob:LL} expressed by 
\begin{equation}\label{eqn:LagrangianActual}
    L(x,y,p) = g(x,y)+\langle G(x,y),p\rangle -h^*(p)
\end{equation}
is convex in $y$ and concave in $p$, but loses the strong convexity and concavity of $L_{\alpha,\beta}$. (The proof of Proposition \ref{prop:lagrangian} includes essentially the arguments leading to this Lagrangian.) Nevertheless, we have the following theorem:
\begin{theorem}\label{thm:c1nor}{\rm (local smoothness of solution of lower-level problem.)}
    Given $x\in \reals^n$, suppose that there exist $y\in \reals^m$ and $p\in \reals^s$ such that the basic qualification \eqref{CQ:llnor} holds at $y$ for $x$ and that $(y,p)\in S(x)$. Suppose also that the relative interior condition $G(x,y)\in \ri(\partial h^*(p))$ holds together with the second-order condition
    \begin{equation}\label{SSOC}
        \begin{matrix}
        -\nabla_y G(x,y)^\top z=\nabla_{yy}^2L(x,y,p)w \\
        -\nabla_yG(x,y)w\in K_{h^*}(p,G(x,y))^\perp
    \end{matrix}\quad \Longrightarrow \quad w=0, ~z=0.
    \end{equation}
    Then $S$ is single-valued and $C^1$ around $x$, with $S(x)=\{(y,p)\}$. Moreover, it satisfies
    \begin{equation}\label{for:gradS}
        \nabla S(x)= -B(B^\top AB)^{-1}B^\top\begin{pmatrix}
            \nabla_x\nabla_y L(x,y,p) \\
            \nabla_xG(x,y)
        \end{pmatrix},
    \end{equation}
    where 
    $$A=\begin{pmatrix}
            \nabla_{yy}^2 L(x,y,p) ~~&~~ \nabla_y G(x,y)^\top \\
                 \nabla_y G(x,y) & 0
            \end{pmatrix} $$ and $B$ is a matrix whose columns form a basis of $\reals^m \times K_{h^\ast}(p,G(x,y))$.
\end{theorem}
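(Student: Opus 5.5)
The plan follows the proof of Theorem \ref{thm:c1} and applies Theorem \ref{thm:hang} to the generalized equation \eqref{opt:llnor}, now with $\alpha=0$ and $\beta=0$. Set
\[
\psi_x(y,p)=\begin{pmatrix}-\nabla_yg(x,y)-\nabla_yG(x,y)^\top p\\ -G(x,y)\end{pmatrix},\qquad \tilde h(y,p)=h^*(p).
\]
The function $\tilde h$ is epi-polyhedral and $\psi_x$ is $C^1$. The relative interior assumption $G(x,y)\in\ri(\partial h^*(p))$ gives $-\psi_x(y,p)\in \ri(\partial\tilde h(y,p))=\{0\}\times\ri(\partial h^*(p))$; the first component is $0$ because $(y,p)\in S(x)$. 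As in \eqref{eq:criticalcone}, the critical cone is $K=\reals^m\times\overline K$ with $\overline K=K_{h^*}(p,G(x,y))$, which is a subspace, and $K^\perp=\{0\}\times\overline K^\perp$.

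Next I verify \eqref{cond:b}, using $\nabla\psi_x(y,p)=A$ (up to sign conventions, exactly as in Theorem \ref{thm:c1}). For $(w,z)\in K$ with $A(w,z)\in K^\perp$ we get $-\nabla_yG(x,y)^\top z=\nabla^2_{yy}L(x,y,p)w$ and $-\nabla_yG(x,y)w\in\overline K^\perp$. The first equation matches \eqref{SSOC} verbatim. The second also matches, since the $-\alpha z$ term of Theorem \ref{thm:c1} disappears. Hence \eqref{SSOC} is precisely \eqref{cond:b}, with $z$ implicitly restricted to $\overline K$. If \eqref{SSOC} is read with $z$ unrestricted, it only strengthens the hypothesis. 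Theorem \ref{thm:hang} then gives a Lipschitz, $C^1$ single-valued localization of the perturbed solution map $(q_1,q_2)\mapsto\{(y',p')\mid (q_1,q_2)\in\psi_x(y',p')+\{0\}\times\partial h^*(p')\}$ around $0$ for $(y,p)$. Its Jacobian at $0$ is $B(B^\top AB)^{-1}B^\top$, with $B^\top AB$ invertible.

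The parametric step in $x$ is then copied from the proof of Theorem \ref{thm:c1}. Moving to $x'$ near $x$ amounts to the perturbation $q(x',y',p')$, which is built from differences of $\nabla_yg$, $\nabla_yG^\top p$ and $G$ between $x'$ and $x$. The first-order expansion with an $o(\cdot)$ remainder and the implicit function theorem then show that the localization of $S$ near $(y,p)$ is $C^1$ in $x'$. The same computation of $\nabla_x\Phi$, using $\nabla_x\nabla_yL=\nabla_x(\nabla_yg+\nabla_yG^\top p)$, gives formula \eqref{for:gradS}.

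The main obstacle is upgrading the \emph{localization} to genuine single-valuedness of $S$ near $x$, including $S(x)=\{(y,p)\}$. In the regularized case this came free from Proposition \ref{prop:lagrangian}; here it must be argued. I would use convexity. For fixed $x'$, the set $S(x')$ is the set of saddle points of the convex-concave Lagrangian $L(x',\cdot,\cdot)$, and such a set is a product of convex sets $Y(x')\times P(x')$, hence convex. It is therefore connected, so if it contains an isolated point it is that singleton. The localization says that, for $x'$ near $x$, $S(x')\cap V$ is exactly one point for a neighborhood $V$ of $(y,p)$. Convexity of $S(x')$ then forces $S(x')=S(x')\cap V$: any other point would put the segment joining them inside $S(x')$, and that segment has infinitely many points in $V$. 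Thus $S$ is single-valued near $x$ and coincides with the localization. The basic qualification \eqref{CQ:llnor} is used here, together with the fact that it persists for nearby $(x',y')$ by outer semicontinuity of $\partial^\infty h$. It guarantees that primal solutions correspond to saddle points, as in \cite[Theorem 4.64]{royset2021optimization}, so that $S(x')$ is indeed a nonempty product of convex sets. This completes the argument.
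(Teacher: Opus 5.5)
Your proposal is correct and is essentially the paper's proof. The paper likewise reruns the argument of Theorem~\ref{thm:c1} with \eqref{SSOC} supplying condition \eqref{cond:b}, and then uses convexity of the values of $S$ to turn the single-valued $C^1$ localization into genuine single-valuedness.

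Two small remarks. First, since $\nabla\psi_x(y,p)=-A$, the localization has Jacobian $-B(B^\top AB)^{-1}B^\top$ at $0$ rather than $B(B^\top AB)^{-1}B^\top$, and this is what produces the minus sign in \eqref{for:gradS}. Second, \eqref{CQ:llnor} is not needed in the convexity step: the saddle-point set of the convex-concave $L(x',\cdot,\cdot)$ is a product of convex sets whenever it is nonempty, and nonemptiness near $(y,p)$ already follows from the localization.
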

\begin{proof} 
We can use the same argument as in Theorem \ref{thm:c1}, with the condition \eqref{SSOC} guaranteeing that the condition \eqref{cond:b} in Theorem \ref{thm:hang} is satisfied. Since $S(x)$ is convex, $S$ having a single-valued $C^1$ localization around $x$ is equivalent to $S$ itself being single-valued and $C^1$ around that point.
\end{proof}

The second-order condition \eqref{SSOC} is satisfied, for example, if the matrix $\nabla_{yy}^2L(x,y,p)$ is positive definite and the matrix $\nabla_yG(x,y)$ has full row rank, which corresponds to the usual form of a second-order sufficient condition for nonlinear programming problems when $h$ is the indicator of $(-\infty,0]^s$.

We will not make use of Theorem \ref{thm:c1nor} in our algorithmic developments; the forthcoming Theorem \ref{thm:lipnor} provides an optimality condition for \eqref{prob:UL} without the need for the relative interior condition to be satisfied. However, Theorem \ref{thm:c1nor} is of independent interest as it establishes conditions under which one can, in principle, run our algorithms directly to problem \eqref{prob:UL}, without using regularization.

\subsection{Global Properties}

Up to this point, our results describe only the local behavior of $S_{\alpha,\beta}$ and $S$ around points satisfying the relative interior condition. In Section 5, we explain how our algorithms avoid points where this condition fails by employing a sampling scheme. However, these local properties alone are insufficient to guarantee algorithmic convergence unless we also understand certain global properties of $S_{\alpha,\beta}$. In particular, we prove that $S_{\alpha,\beta}$ is a $PC^1$ mapping \cite{scholtes2012introduction}, that is, $S_{\alpha,\beta}$ is continuous, and for every $\bar x \in \reals^n$ there exist finitely many $C^1$ mappings $\phi_1,\dots,\phi_\ell$ such that, for all $x$ sufficiently close to $\bar x$, $S_{\alpha,\beta}(x)\in \{\phi_1(x),\dots,\phi_\ell(x)\}$. 

\begin{theorem}\label{thm:lip}{\rm (global solution mapping properties for regularized lower-level problem.)}
    If $\alpha,\beta>0$, then $S_{\alpha,\beta}$ is a $PC^1$ mapping. 
\end{theorem}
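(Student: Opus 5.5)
We prove continuity of $S_{\alpha,\beta}$ first, and then cover a neighborhood of any $\bar x$ by finitely many $C^1$ selections obtained by \emph{freezing} the active structure of the polyhedral function $h^*$.

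\emph{Continuity.} Fix $\bar x$ and $x^\nu\to\bar x$, and set $(y^\nu,p^\nu)=S_{\alpha,\beta}(x^\nu)$. The objective in \eqref{prob:LLr} is $\beta$-strongly convex in $y$, and $e_\alpha h$ is finite and convex, so it is bounded below by an affine function. Hence the minimizers $y^\nu$ remain bounded. By the second row of \eqref{opt:ge}, $p^\nu=\nabla e_\alpha h(G(x^\nu,y^\nu))$, and $\nabla e_\alpha h$ is $(1/\alpha)$-Lipschitz, so $p^\nu$ is bounded as well. Take any cluster point $(y,p)$. The graph of $\partial h^*$ is closed and $\psi$ is continuous, so $(y,p)$ satisfies \eqref{opt:ge} at $\bar x$. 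Uniqueness in Proposition \ref{prop:lagrangian} then forces $(y,p)=S_{\alpha,\beta}(\bar x)$, which gives continuity.

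\emph{Pieces.} Since $h^*$ is epi-polyhedral, its subdifferential graph is a finite union of polyhedral ``faces''. Concretely, write $h^*$ as in \eqref{def:h}, with data $\tilde a^j,\tilde b^i$ and index sets $\tilde J,\tilde I$. For each choice of subsets $J'\subset\tilde J$, $I'\subset\tilde I$, define the affine set $F_{J',I'}$ consisting of pairs $(p,v)$ with
\begin{itemize}
\item the linear part of $h^*$ equal on $J'$,
\item $\langle\tilde b^i,p\rangle=\tilde\beta_i$ for $i\in I'$,
\item $v\in \tilde a^{j_0}+\spn\{\tilde a^j-\tilde a^{j_0},\ \tilde b^i\}$,
\end{itemize}
where $j_0\in J'$ is fixed. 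This set is an affine subspace $E_{J',I'}$ of $\reals^s\times\reals^s$ of dimension $s$, namely the affine hull of the corresponding face of $\gph\partial h^*$; this is the standard Robinson normal-manifold decomposition.

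For each $(J',I')$ consider the smooth equation $\Psi_{J',I'}(x,y,p,v)=0$ that encodes
\begin{itemize}
\item the first row of \eqref{opt:ge},
\item $v=G(x,y)-\alpha p$,
\item $(p,v)\in E_{J',I'}$.
\end{itemize}
This is a square system in $(y,p,v)$. Its Jacobian in $(y,p,v)$ is nonsingular, by exactly the computation in the proof of Theorem \ref{thm:c1}. The face structure replaces $\overline K$ by the subspace $\overline K_{J',I'}$ parallel to the $p$-directions of $E_{J',I'}$, and the positive definiteness of $M$ together with $\alpha>0$ rules out nonzero kernel vectors. The implicit function theorem therefore yields a $C^1$ map $\phi_{J',I'}$ near $\bar x$.

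The delicate point is that $\phi_{J',I'}$ must be \emph{globally} defined near $\bar x$, not only near points already on the piece. So I only take the index pairs $(J',I')$ active at $S_{\alpha,\beta}(\bar x)$, meaning those whose face contains $(\bar p,G(\bar x,\bar y)-\alpha\bar p)$. For such pairs, $(\bar y,\bar p,\bar v)$ solves $\Psi_{J',I'}(\bar x,\cdot)=0$, so the implicit function theorem applies at $\bar x$.

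Now take $x$ near $\bar x$. By continuity, $S_{\alpha,\beta}(x)$ is near $S_{\alpha,\beta}(\bar x)$, and by upper semicontinuity of the finitely many faces, its pair $(p,v)$ lies in a face that is also active at $\bar x$. Thus $(y,p,v)$ solves one of the systems $\Psi_{J',I'}=0$ near $(\bar y,\bar p,\bar v)$. Local uniqueness in the implicit function theorem then gives $S_{\alpha,\beta}(x)=\phi_{J',I'}(x)$, which is the $PC^1$ property.

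\emph{Main obstacle.} The hard step is the nonsingularity of the frozen-face Jacobian when the face is not the one given by the relative interior condition. I would handle it by noting that the argument behind \eqref{cond:ssoc} used only that $\overline K$ is a subspace with $z\in\overline K$ and a residual in $\overline K^\perp$. That argument goes through verbatim for every face subspace: positive definiteness of $\nabla_yGM^{-1}\nabla_yG^\top+\alpha\bbI_s$ kills $z$, and then $M$ kills $w$.
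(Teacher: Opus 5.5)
Your proof is correct. It has the same overall structure as the paper's: continuity first, then finitely many $C^1$ selections obtained by freezing active index sets, then identification of $S_{\alpha,\beta}$ with one of them near $\bar x$. Each step, however, is carried out with a different and more elementary tool, compared below.

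\textbf{Continuity.} The paper uses epi-convergence of the regularized objectives and of $-L_{\alpha,\beta}(x^\nu,y^\nu,\cdot)$. You use boundedness, closedness of $\gph\partial h^*$, and the uniqueness in Proposition \ref{prop:lagrangian}. Your justification of boundedness via an affine minorant of $e_\alpha h$ is loose when $G$ is nonlinear in $y$. The clean version is strong monotonicity: $\beta|y^\nu-\bar y|\le|\nabla_y\Phi_{x^\nu}(\bar y)|$, where $\Phi_x$ is the objective in \eqref{prob:LLr}. Since $\nabla_y\Phi_{\bar x}(\bar y)=0$, this even gives $y^\nu\to\bar y$ directly, and then $p^\nu=\nabla e_\alpha h(G(x^\nu,y^\nu))\to\bar p$.

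\textbf{Pieces.} The paper freezes index sets of $h$ at $G(\bar x,\bar y)-\alpha\bar p$ and builds $\widehat h$ with affine subdifferential, so the relative interior condition holds trivially. It then invokes Theorem \ref{thm:hang} followed by the implicit-function argument of Theorem \ref{thm:c1}. You instead freeze faces of $\gph\partial h^*$, lift with $v=G(x,y)-\alpha p$ to a square smooth system, and apply the classical implicit function theorem directly. The computation behind \eqref{cond:ssoc} gives nonsingularity in your setting as well.

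Both constructions replace the subdifferential graph near the base point by finitely many $s$-dimensional affine sets; the paper's $\gph\partial\widehat h^*$ is of exactly this type. So the difference lies in the bookkeeping and in which implicit-function result is invoked.

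\textbf{What each route buys.} Your version is self-contained. It also makes explicit the local uniqueness needed to conclude $S_{\alpha,\beta}(x)=\phi_{J',I'}(x)$, which the paper leaves implicit. The paper's indexing by active sets of $h$ at $G-\alpha p$, on the other hand, is what Theorem \ref{prop:alg} later exploits to compute the gradient of the essentially active piece through \eqref{sys:khri}.

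\textbf{Covering step.} You replace the paper's appeal to \cite[Lemma 2.1]{hang2024role} by the fact that the finitely many closed sets $E_{J',I'}$ cover $\gph\partial h^*$. Hence those missing $(\bar p,\bar v)$ stay at positive distance from it. This is the precise content of your phrase ``upper semicontinuity of faces''.

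\textbf{One detail to fix.} Impose $(p,v)\in E_{J',I'}$ through bases of $L=\spn\{\tilde a^j-\tilde a^{j_0},\tilde b^i\}$ and of $L^\perp$. Do not list all the defining equalities, which may be linearly dependent; otherwise the system is not genuinely square.
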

\begin{proof}  Let us fix $\bar{x}\in \reals^n$ with $(\bar{y},\bar{p})=S_{\alpha,\beta}(\bar{x})$ and consider $x^\nu \to \bar{x}$ with $(y^\nu,p^\nu)=S_{\alpha,\beta}(x^\nu)$. By continuity of $g$ and $e_{\alpha}h \circ G$, we have for any $y\in \reals^m$
\begin{equation*}
    g(x^\nu,y)+ e_{\alpha }h(G(x^\nu,y))+\frac{\beta}{2}|y|^2 \to g(\bar{x},y)+ e_{\alpha }h(G(\bar{x},y))+\frac{\beta}{2}|y|^2.
\end{equation*}
Since $e_{\alpha }h(G(\bar{x},\cdot))+\frac{\beta}{2}|\cdot|^2$ is convex and real valued, we have by \cite[Proposition 4.18]{royset2021optimization} that 
\begin{equation*}
    g(x^\nu,\cdot)+ e_{\alpha }h(G(x^\nu,\cdot))+\frac{\beta}{2}|\cdot|^2 \eto g(\bar{x},\cdot)+e_{\alpha }h(G(\bar{x},\cdot))+\frac{\beta}{2}|\cdot|^2.
\end{equation*}
Moreover, since $\nargmin g(\bar{x},\cdot)+e_{\alpha }h(G(\bar{x},\cdot))+\frac{\beta}{2}|\cdot|^2 = \{\bar{y}\}$, it follows from \cite[Proposition 9.40]{royset2021optimization} that $y^\nu\to \bar{y}$. Similarly, the Lagrangians (see \eqref{eq:lagrangian}) satisfy
\begin{equation*}
    -L_{\alpha,\beta}(x^\nu,y^\nu,\cdot) \eto -L_{\alpha,\beta}(\bar{x},\bar{y},\cdot)
\end{equation*}
because $G(x^\nu,y^\nu)\to G(\bar{x},\bar{y})$. Since $\nargmin -L_{\alpha,\beta}(x^\nu,y^\nu,\cdot)=\{p^\nu\}$ for all $\nu\in \nats$, one has $\nargmin -L_{\alpha,\beta}(\bar{x},\bar{y},\cdot)=\{\bar{p}\}$. This implies that $p^\nu \to \bar{p}$ by \cite[Proposition 9.40]{royset2021optimization}. Therefore, $S_{\alpha,\beta}$ is continuous at $\bar{x}$.
\par
Let $x$ be sufficiently close to $\bar{x}$ with $(y,p)=S_{\alpha,\beta}(x)$. Consider sets of indices $I^*$ and $J^*$ such that $I^+(G(\bar{x},\bar{y})-\alpha \bar{p},\sigma)\subset I^*\subset I(G(\bar{x},\bar{y})-\alpha\bar{p})$ and $J^+(G(\bar{x},\bar{y})-\alpha \bar{p},\tau)\subset J^*\subset J(G(\bar{x},\bar{y})-\alpha\bar{p})$ respectively for a representation $\sigma,\tau$ of $\bar{p}$. Let $\cI$ and $\cJ$ be the families of sets of indices satisfying these properties.

By continuity of $G$ and $S_{\alpha,\beta}$, the pieces that are inactive at $G(\bar{x},\bar{y})-\alpha \bar{p}$ will remain inactive at $G(x,y)-\alpha p$. By the same argument, together with \cite[Lemma 2.1]{hang2024role}, the pieces in $I^+(G(\bar{x},\bar{y})-\alpha \bar{p},\sigma)$ and $J^+(G(\bar{x},\bar{y})-\alpha \bar{p},\tau)$ will remain active at $G(x,y)-\alpha p$. Therefore, we have that $I(G(x,y)-\alpha p)\in \cI$ and $J(G(x,y)-\alpha p)\in \cJ$. We seek to construct a finite collection of $C^1$ mappings, which will correspond to the finite elements in $\cI$ and $\cJ$, and then prove that $S_{\alpha,\beta}$ must be equal to one of them at $x$, namely the one corresponding to $I(G(x,y)-\alpha p)$ and $J(G(x,y)-\alpha p)$.

Let $I^*\in \cI$ (if $\cI=\{\emptyset\}$, then take $I^*=\emptyset$), $J^*\in \cJ$ (note that $\cJ$ only contains nonempty sets of indices since the maximum must be attained at at least one of the pieces of $h$), pick any $\widehat{j}\in J^*$ and define
\begin{equation*}
    \widehat{h}(z) = \langle a^{\widehat{j}},z\rangle-\alpha_{\widehat{j}}+\iota_{\dom \widehat{h}}(z),
\end{equation*}
where
\begin{equation*}
    \dom \widehat{h} = \big\{ z ~\big|~ \langle a^j,z\rangle-\alpha_j=\langle a^{\widehat{j}},z\rangle-\alpha_{\widehat{j}}\  \forall j\in J^*, \langle b^i,z\rangle-\beta_i=0\ \forall i\in I^*\big\} .
\end{equation*}
Note that $\widehat{h}$ is epi-polyhedral and that, by \cite[Proposition 2.39]{royset2021optimization},
\begin{equation}\label{for:hhat}
    \partial \widehat{h}(z) = \left\{a^{\widehat{j}}+\sum_{i\in I^*}s_ib^i+\sum_{j\in J^*}t_j(a^j-a^{\widehat{j}})~\bigg|~ s_i\in \reals,t_j\in \reals  \right\}.
\end{equation}
In particular, $\partial \widehat{h}(z)$ is an affine space and then $\ri(\partial \widehat{h}(z))=\partial \widehat{h}(z)$. \par
Consider the system
\begin{equation}\label{sys:hhat}
    \begin{aligned}
        0 & = -\nabla_y g(\bar{x},y)-\nabla_y G(\bar{x},y)^\top p-\beta y\\
        0 & \in -G(\bar{x},y)+\alpha p +\partial \widehat{h}^*(p),
    \end{aligned}
\end{equation}
which is similar to \eqref{opt:ll}; $\bar{y}$ and $\bar{p}$ solve \eqref{sys:hhat} too. Indeed, $\bar{p}\in \partial\widehat{h}(G(\bar{x},\bar{y})-\alpha\bar{p})$, which is true by taking $s_j=\sigma_j$ and $t_i=\tau_i$ in \eqref{for:hhat} for some representation of $\bar{p}$ as in \eqref{decomposition}. Similar to \eqref{for:psih}, we define
\begin{equation}\label{sys:active}
    \psi(y,p)=\begin{pmatrix}
        -\nabla_y g(\bar{x},y)-\nabla_yG(\bar{x},y)^\top p-\beta y \\ -G(\bar{x},y)+\alpha p
    \end{pmatrix} \quad \text{and}\quad \tilde{h}(y,p)=\widehat{h}^*(p),
\end{equation}
which is a system that fits the framework of Theorem \ref{thm:hang}. We have that $0\in \psi(\bar{y},\bar{p})+\partial\tilde{h}(\bar{y},\bar{p})$ and that, since $\partial \widehat{h}(G(\bar{x},\bar{y})-\alpha\bar{p})$ is an affine space, $-\psi(\bar{y},\bar{p})\in \ri(\partial\tilde{h}(\bar{y},\bar{p}))$. In order to apply Theorem \ref{thm:hang}, we only need to verify that the condition \eqref{cond:b} holds. From the same argument leading to \eqref{eq:criticalcone}, we have that
\begin{equation*}
    K_{\tilde{h}}((\bar{y},\bar{p}),(-\nabla_y g(\bar{x},\bar{y})-\nabla_yG(\bar{x},\bar{y})^\top \bar{p}-\beta \bar{y} ,-G(\bar{x},\bar{y})+\alpha \bar{p})) = \reals^m\times K_{\widehat{h}^*}(\bar{p},G(\bar{x},\bar{y})-\alpha \bar{p}).
\end{equation*}
Then we can follow the proof of Theorem \ref{thm:c1}, 
replacing $K_{h^*}(\bar{p},G(\bar{x},\bar{y})-\alpha \bar{p})$ with $K_{\widehat{h}^*}(\bar{p},G(\bar{x},\bar{y})-\alpha \bar{p})$, and obtaining that the solution mapping of \eqref{sys:hhat} is a $C^1$ function of the left-hand side around 0. The argument to prove that it is a $C^1$ mapping in $x$ around $\bar{x}$ is also the same as the one in Theorem \ref{thm:c1}. With this, the mapping 
\begin{equation}\label{def:phiij}
\varphi_{I^*,J^*}(x)=\{(y,p)\mid (y,p) \text{ satisfy }\eqref{sys:hhat} \text{ at }x\}
\end{equation}
is single-valued and $C^1$ around $\bar{x}$. By repeating this for every combination of $I^*$ and $J^*$, we obtain a finite family of $C^1$ mappings defined in a neighborhood around $\bar{x}$.\par
Let now $x'$ be in a neighborhood of $\bar{x}$ such that $I(G(x',y')-\alpha p')\in \cI$, $J(G(x',y')-\alpha p')\in \cJ$ for $(y',p')=S_{\alpha,\beta}(x')$ and the mappings $\varphi_{I^*,J^*}$ as defined in \eqref{def:phiij} are $C^1$ for all $I^*\in \cI$ and $J^*\in \cJ$. Take $I^*=I(G(x',y')-\alpha p')$ and $J^*=J(G(x',y')-\alpha p')$. Once again, $y'$ and $p'$ satisfy the first equality in \eqref{sys:hhat} when $x=x'$ since $(y',p')=S_{\alpha,\beta}(x')$ and, by considering a representation $(\sigma',\tau')$ of $p'$ and taking $s_j=\sigma_j'$, $t_i=\tau_i'$ in \eqref{for:hhat}, we obtain that $$p'\in \partial \widehat{h}(G(x',y')-\alpha p').$$ Therefore, $S_{\alpha,\beta}(x')=\varphi_{I^*,J^*}(x')$. Since this argument is valid for any $\bar{x}\in \reals^n$, $S_{\alpha,\beta}$ is $PC^1$. 
\end{proof}
\par

The importance of Theorem \ref{thm:lip} comes from the fact that $PC^1$ mappings are locally Lipschitz, differentiable almost everywhere and $C^1$ in an open dense subset of $\reals^n$ 
%xxx unclear, since PC1 seems to be real-valued by your definition xxx
\cite{scholtes2012introduction}, which makes nonconvex nonsmooth optimization methods like bundle methods, Goldstein-type methods and their extensions to constrained optimization suitable for finding stationary points of $x\mapsto f(x,Y_{\alpha,\beta}(x))$ or the corresponding constrained problem when $X\subset \reals^n$.

Under an assumption similar to the second-order condition \eqref{SSOC}, the actual lower-level solution mapping is single-valued in a neighborhood of a point, as we see next, even if the relative interior condition is not satisfied at this point. This result will be used in our analysis to ensure that stationarity of $x\mapsto f(x,Y(x))$, possibly subject to constraints, is well defined. 

The assumption \eqref{SSOC2} in the next theorem is equivalent to the standard coderivative criterion for solution mappings of generalized equations (cf. \cite[Theorem 9.40]{VaAn}). Here, for a function $\varphi:\reals^n\to \Reals$ and $\bar{u}\in \partial \varphi(\bar{x})$, we define the coderivative mapping of $\partial \varphi$ at $\bar{x}$ for $\bar{u}$ by 
\begin{equation*}
    z\in  D^\ast(\partial \varphi)(\bar{x},\bar{u})(w) \quad \Longleftrightarrow \quad (z,-w)\in N_{\gph \partial \varphi}(\bar{x},\bar{u}).
\end{equation*}
\begin{theorem}\label{thm:lipnor}{\rm (local single-valuedness of solution mapping of lower-level problem)}
    Given $x\in \reals^n$, suppose that the basic qualification \eqref{CQ:llnor} holds at $y$ for $x$ and that $(y,p)\in S(x)$. Suppose also that the condition
    \begin{align}\tag{SOSC}\label{SSOC2}
        \begin{matrix}
        -\nabla_y G(x,y)^\top z=\nabla_{yy}^2L(x,y,p)w \\
        z\in D^\ast(\partial h)(G(x,y),p)(\nabla_y G(x,y)w)
    \end{matrix}\quad \Longrightarrow \quad w=0,z=0
    \end{align}
    holds at $y$ for $x$ and $p$, where $L$ is defined in \eqref{eqn:LagrangianActual}. Then $S$ is single-valued in a neighborhood of $x$.
\end{theorem}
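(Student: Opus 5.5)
We will combine the convexity of $S(x')$ with the Aubin property (Lipschitz-like behavior) of $S$ at $x$ for $(y,p)$, obtained via the Mordukhovich coderivative criterion. Two facts do the work: $S(x')$ is convex, being the saddle-point set of the convex–concave Lagrangian $L(x',\cdot,\cdot)$ in \eqref{eqn:LagrangianActual}; and \eqref{SSOC2} forces $S$ to be single-valued near $(x,(y,p))$.

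\textbf{Step 1: rewrite $S$ as a generalized equation.} Since $\partial h^*(p)\ni v$ iff $p\in\partial h(v)$, we have $(y',p')\in S(x')$ iff
\[
\nabla_y g(x',y')+\nabla_y G(x',y')^\top p'=0 \quad\text{and}\quad p'\in\partial h(G(x',y')).
\]
Introduce a canonical perturbation $(q_1,q_2)$ on the left-hand side, i.e., $q_1$ added to the first equation and $q_2$ to the argument $G(x',y')$. Apply the coderivative criterion \cite[Theorem 9.40]{VaAn} together with the calculus rule for normals to the graph of $(x',y',p')\mapsto(\nabla_y L,\,(G,p'))$ composed with $\gph\partial h$ \cite[Exercise 10.43 / Theorem 6.14]{VaAn}. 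The required qualification is supplied by the smooth part having surjective derivative in the $(q_1,q_2)$ directions.

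\textbf{Step 2: verify the coderivative condition.} The condition that $D^*S$ at the reference point has trivial kernel in the perturbation variables reduces to: if
\[
\nabla^2_{yy}L\,w+\nabla_yG^\top z=0 \quad\text{and}\quad z\in D^*(\partial h)(G(x,y),p)(\nabla_yG\,w),
\]
then $w=z=0$. This is exactly \eqref{SSOC2}, modulo the sign convention for $D^*$, which needs careful tracking. The conclusion is that the solution mapping $\Sigma(q_1,q_2,x')$ has the Aubin property, and in fact a strong metric regularity-type property. We also need local nonemptiness of $S$ near $x$. The basic qualification \eqref{CQ:llnor} is stable under perturbation, so it gives that multipliers exist for minimizers near $y$. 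Existence of minimizers near $y$ comes from the Aubin property itself, since Lipschitz-like behavior of $\Sigma$ yields $\Sigma(0,x')\ne\emptyset$ near $(y,p)$.

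\textbf{Step 3: upgrade to single-valuedness.} This is the main obstacle. The Aubin property alone allows multivaluedness, so convexity must be used. Suppose $S(x')$ contains two distinct points near $(y,p)$. Then by convexity it contains the whole segment between them, and hence a continuum of solutions accumulating at a point. Note that \eqref{SSOC2} at $x$ persists near $x$ for all nearby points of $\gph S$, because the coderivative criterion is an open condition via outer semicontinuity of normal cones. At such an accumulation point, take the limit of difference quotients along the segment, $(w,z)=\lim\,(\Delta y,\Delta p)/t$. Since $\nabla_yL$ vanishes along the segment, $(w,z)$ satisfies the linearized first equation. Since $(G+t\cdot,p+t\cdot)$ stays in $\gph\partial h$, which is polyhedral so that tangents are normal-dual, $(\nabla_yG\,w,z)$ is tangent to $\gph\partial h$. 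Polyhedrality of $\gph\partial h$ (a finite union of polyhedra) makes tangent directions imply the coderivative inclusion up to the standard reversal: for polyhedral graphs, $(a,b)\in T$ yields $b\in D^*(\cdot)(\ldots)(-a)$ after using monotonicity of $\partial h$. We then have to reconcile this with the sign in \eqref{SSOC2}, possibly by exploiting $\langle w,\nabla^2_{yy}Lw\rangle\ge0$ and monotonicity $\langle z,\nabla_yG\,w\rangle\ge0$. This yields a nonzero $(w,z)$ violating \eqref{SSOC2} at a nearby point, a contradiction, and so $S$ is single-valued near $x$. If the tangent-to-coderivative step fails, the fallback is to invoke directly the result that a Lipschitz-like mapping with convex values that is locally monotone/maximal (as here, the saddle subdifferential) is single-valued locally, via \cite[Theorem 9.40 with 12.x]{VaAn}.
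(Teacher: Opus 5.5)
Your argument is sound in outline but takes a genuinely different route from the paper. The paper rewrites $(y',p')\in S(x')$ as the generalized equation at the fixed $x$ with left-hand side $q(x',y',p')$. It then cites \cite[Theorem 5.2]{hang2024role}, which characterizes, by exactly \eqref{SSOC2}, when the left-hand-side-perturbed system has a single-valued (Lipschitz) localization around $0$. Two points are left implicit there: that $q$ itself depends on $(y',p')$, and that convexity of $S(x')$ is what upgrades a localization to single-valuedness of $S$. You use only the easy half of this theory, the Mordukhovich criterion. With your perturbation it reduces to \eqref{SSOC2} exactly, with no sign adjustment: with multipliers $(w,z,\lambda_3)$ one finds $\lambda_3=-\nabla_yG\,w$ and $(z,-\nabla_yG\,w)\in N_{\gph\partial h}(G(x,y),p)$. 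The qualification holds because the derivative is onto once the $p'$-block is used along with $(q_1,q_2)$. This gives the Aubin property in the joint parameter $(q,x')$, hence points of $S(x')$ near $(y,p)$. Uniqueness then comes from three ingredients: convexity of the saddle-point set $S(x')$, linearization along a segment, and stability of \eqref{SSOC2} at nearby points of $\gph S$ (normalize and use outer semicontinuity of limiting normals). Your route avoids the Aubin-implies-strong-regularity theorem for polyhedral problems and handles the $x'$-dependence directly. It still gives local Lipschitz continuity of $S$, since a single-valued mapping with the Aubin property is locally Lipschitz. The paper's citation is shorter and delivers the Lipschitz localization in one step.

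The step you flagged needs a different justification than the one you give. Polyhedrality plus monotonicity does not turn tangent directions into coderivative directions. For the monotone linear map $M=\begin{pmatrix}1&1\\-1&1\end{pmatrix}$, the vector $(a,Ma)$ is tangent to $\gph M$, yet $Ma\in D^*M(\pm a)=\{\pm M^\top a\}$ forces $a=0$.

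What works is the subdifferential structure. Near $(v',p')=(G(x',y_1),p_1)$ the set $\gph\partial h-(v',p')$ coincides with $\gph N_{K'}$, where $K'=K_h(v',p')$. So the tangent $(a,b)=(\nabla_yG(x',y_1)w,z)$ satisfies $a\in K'$, $b\in (K')^*$, and $\langle a,b\rangle=0$. Take the exposed face $F=K'\cap b^\perp$, which contains $a$, and set $F_1=F_2=F$ in the face description of $N_{\gph N_{K'}}(0,0)$ (cf.\ \cite[Theorem 3.3(b)]{hang2024role}). Then $F-F$ is a subspace containing $\pm a$, and $b\in(F-F)^\perp$. Hence $z\in D^*(\partial h)(v',p')(\pm\nabla_yG(x',y_1)w)$, and there is no sign problem.

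Your proposed reconciliation also works on its own. Pair $\nabla^2_{yy}L\,w+\nabla_yG^\top z=0$ with $w$, and use $\nabla^2_{yy}L\succeq0$ (since $p_1\ge0$) together with $\langle\nabla_yG\,w,z\rangle=0$. This gives $\nabla^2_{yy}L\,w=0=\nabla_yG^\top z$, so \eqref{SSOC2} applies with either $w$ or $-w$. The maximal-monotone fallback is too vague as stated and is not needed.
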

\begin{proof} 
    Let $x'$ be sufficiently close to $x$. Under \eqref{CQ:llnor}, $(y',p')\in S(x')$ if and only if
    \begin{equation*}
    \begin{aligned}
     \begin{pmatrix}
        -\nabla_yg(x,y')-\nabla_yG(x,y')^\top p'+\nabla_yg(x',y')+\nabla_yG(x',y')^\top p'\\
        -G(x,y')+G(x',y')
    \end{pmatrix} \in &\begin{pmatrix}
        -\nabla_yg(x,y')-\nabla_y G(x,y')^\top p'\\
        -G(x,y') 
    \end{pmatrix}\\
    &+ \{0\}\times \partial h^\ast(p').
    \end{aligned}
\end{equation*}
    Therefore, in order for $S$ to be single-valued around $x$ it is sufficient for the solution mapping of \eqref{opt:llnor} to be single-valued around 0 as a function of the left-hand side. By \cite[Theorem 5.2]{hang2024role}, this is equivalent to the condition \eqref{SSOC2} being satisfied at $y$ for $x$ and $p$, concluding the proof.
\end{proof}

The condition \eqref{SSOC2} can be complicated to check directly, but, as for Theorem \ref{thm:c1nor}, a sufficient condition is having $\nabla_{yy}^2 L(x,y)$ be positive definite and $\nabla_y G(x,y)$ having full rank. Indeed, by \cite[Theorem 3.3(b)]{hang2024role}, $z\in D^\ast(\partial h)(G(x,y),p)(\nabla_y G(x,y)w)$ if and only if there exist faces $F_1,F_2$ of $K_h(G(x,y),p)$ such that $F_2\subset F_1$,
$-\nabla_y G(x,y)w\in F_1-F_2$, and $z\in (F_1-F_2)^\ast$. Since $$-\nabla_y G(x,y)^\top z=\nabla_{yy}^2L(x,y)w,$$ we have that if $\nabla_{yy}^2 L(x,y)$ is positive definite and $\nabla_y G(x,y)$ has full rank then $-\nabla_y G(x,y)w=Mz$ for some positive definite matrix $M$, from which we obtain that $w=0$ and $z=0$ following the argument around \eqref{cond:ssoc}.

We conclude the section by discussing the particular case of the lower-level problem being a nonlinear program with inequality constraints.
\begin{example}
Let $h(z)=z_0+\sum_{i=1}^{m-1}\iota_{(-\infty,0]}(z_i)$. In this case, we can write $h$ as in \eqref{def:h} by taking $a^1=(1,0,0,...,0)$, $\alpha_1=0$ and $b^i=e_{i+1}$, $\beta_i=0$ for all $i=1, \dots, m-1$, where $e_{i}$ is the vector with 1 in the i$^{th}$ coordinate and 0 in the rest. If $u\in \partial h(z)$, then
\begin{equation*}
    u = a^1+\sum_{i\in I(z)}\tau_ib^i.
\end{equation*}
From this, we obtain that $u\in \ri (\partial h(z))$ if and only if $\tau_i>0$ for all $i\in I(z)$, that is, if and only if the multiplier associated with every active constraint is strictly positive. Therefore, the relative interior condition reduces to strict complementary slackness when the lower-level problem is a nonlinear program with inequality constraints. Moreover, it is well known that the constraint qualification \eqref{CQ:llnor} amounts to the Mangasarian-Fromovitz constraint qualification in this case (cf. \cite[Example 4.49]{royset2021optimization}). Furthermore, the condition that the matrix $\nabla_yG(x,y)$ has full row rank and the matrix $\nabla_{yy}^2G(x,y)p$ is positive definite, which implies \eqref{SSOC2}, also implies the usual second-order sufficient condition for nonlinear programs with inequality constraints (e.g., \cite[Condition (SSOC)]{dempe2000bundle}). Note also that for $p\in \ri(\partial h(z))$, $w\in K_h(z,p)$ if and only if $\langle b^i,w\rangle=0$ for all $i\in I(z)$, which implies that $w_{i+1}=0$ whenever $i\in I(z)$. From \eqref{eq:polar}, $w\in K=\reals^m\times K_{h^*}(p,G(x,y)-\alpha p)$ if and only if $w_{m+i+1}=0$ whenever $i\notin I(z)$. The columns of the matrix
\begin{equation*}
    B=\begin{pmatrix}
        \bbI_m & 0\\
        0 & M_{G(x,y)-\alpha p}
    \end{pmatrix}
\end{equation*}
form a basis for $K$, where the matrix $M_{z}$ is a truncated identity matrix where the columns corresponding to inactive pieces at $z$ are removed. If $A$ is defined as in Theorem \ref{thm:c1} and $z=G(x,y)-\alpha p$, one has that 
\begin{equation*}
    B^\top AB = \begin{pmatrix}
        \nabla_{yy}^2G(x,y)p+\beta \bbI_m & (\nabla_yG_i(x,y))_{i\in I(z)}^\top \\
        (\nabla_yF_i(x,y))_{i\in I(z)} & \alpha \bbI_{|I(z)|}
    \end{pmatrix}.
\end{equation*}
where $|I(z)|$ indicates the cardinality of $I(z)$. That is, $B^\top AB$ is simply the matrix $A$ obtained when considering only the active constraints at $G(x,y)-\alpha p$. (Note that $\nabla_{yy}^2G(x,y)p=(\nabla_{yy}^2G_i(x,y)p_i)_{i\in I(z)}$ since $p_i=0$ when $i\notin I(z)$.) This result had already been obtained using classical tools from nonlinear programming \cite{fiacco1990nonlinear} and it was used in \cite{dempe2000bundle} in the context of bilevel optimization. Theorem \ref{thm:c1} is a generalization of this result and, thanks to the introduction of the regularization given by the Moreau envelope, does not rely on assumptions like the Mangasarian-Fromovitz constraint qualification. A similar situation occurs even when the relative interior condition is not satisfied. In this case, Theorem \ref{thm:lip} reduces to \cite[Theorem 2.4]{dempe2000bundle}, which relies on \cite[Theorem 2]{ralph1995directional}, from which our proof draws inspiration.
\end{example}

\section{Convergence of the Regularized Lower-Level Problems}

This section shows that the regularized lower-level problem \eqref{prob:LLr} furnishes a good approximation of the actual lower-level problem \eqref{prob:LL}. In particular, we study the convergence of $S_{\alpha^\nu,\beta^\nu}(x^\nu)$ and $\nabla S_{\alpha^\nu,\beta^\nu}(x^\nu)$ when $x^\nu\to \bar{x}$ and $\alpha^\nu,\beta^\nu \searrow0$.

The first proposition establishes the convergence of the sequences of primal and dual solutions.

\begin{proposition}\label{prop:convS}
    For $\bar{x}\in \reals^n$ and $\{(\bar{y},\bar{p})\}=S(\bar{x})$, suppose that \eqref{SSOC2} holds at $\bar{y}$ for $\bar{x}$ and $\bar{p}$. If $x^\nu\to \bar{x}$ and $\alpha^{\nu},\beta^{\nu}\searrow 0$, then $S_{\alpha^\nu,\beta^\nu}(x^\nu)\to (\bar{y},\bar{p})$.    
\end{proposition}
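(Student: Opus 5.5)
The plan is to treat $(y^\nu,p^\nu)=S_{\alpha^\nu,\beta^\nu}(x^\nu)$ as solutions of perturbed versions of the generalized equation \eqref{opt:llnor} at $\bar x$. Then we use the Aubin/Lipschitz-type stability that \eqref{SSOC2} provides through the coderivative criterion of \cite[Theorem 5.2]{hang2024role} (equivalently \cite[Theorem 9.40]{VaAn}). That stability is invoked in the proof of Theorem \ref{thm:lipnor}. The argument has three steps: show boundedness of $(y^\nu,p^\nu)$, show every cluster point lies in $S(\bar x)=\{(\bar y,\bar p)\}$, and conclude convergence of the whole sequence.

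\textbf{Step 1 (rewrite as a perturbation).} By \eqref{opt:ll} at $x^\nu$, the pair $(y^\nu,p^\nu)$ satisfies
\[
\begin{pmatrix} q_1^\nu\\ q_2^\nu\end{pmatrix}\in
\begin{pmatrix} -\nabla_y g(\bar x,y^\nu)-\nabla_yG(\bar x,y^\nu)^\top p^\nu\\ -G(\bar x,y^\nu)\end{pmatrix}+\{0\}\times\partial h^*(p^\nu).
\]
The perturbation is
\[
q_1^\nu=\nabla_y g(x^\nu,y^\nu)-\nabla_y g(\bar x,y^\nu)+\big(\nabla_yG(x^\nu,y^\nu)-\nabla_yG(\bar x,y^\nu)\big)^\top p^\nu+\beta^\nu y^\nu,
\]
\[
q_2^\nu=G(x^\nu,y^\nu)-G(\bar x,y^\nu)-\alpha^\nu p^\nu.
\]
If $(y^\nu,p^\nu)$ is bounded, then $q^\nu\to0$, since $x^\nu\to\bar x$, $\alpha^\nu,\beta^\nu\to0$, and $g,G$ are $C^2$. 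Any cluster point $(\hat y,\hat p)$ then satisfies \eqref{opt:llnor} at $\bar x$, because $\gph\partial h^*$ is closed. Hence $(\hat y,\hat p)\in S(\bar x)$, so $(\hat y,\hat p)=(\bar y,\bar p)$, and the whole sequence converges.

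\textbf{Step 2 (boundedness; main obstacle).} Boundedness is where the difficulty lies: the regularized problems are only guaranteed to be near the actual one locally, and the solutions could in principle escape to infinity. I would use the graphical localization that \eqref{SSOC2} gives. By \cite[Theorem 5.2]{hang2024role}, the perturbed solution mapping $q\mapsto\Sigma(q)$ of the system in Step 1 (with the terms depending on $x$ and $y$ frozen at $\bar x$) has a single-valued Lipschitz localization around $q=0$ for $(\bar y,\bar p)$. Thus there exist neighborhoods $Q$ of $0$ and $W$ of $(\bar y,\bar p)$ with $\Sigma(q)\cap W=\{\sigma(q)\}$ for $q\in Q$.

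To obtain a solution of the regularized problem inside $W$, I would run a fixed-point argument. The map $(y,p)\mapsto \sigma\big(q^\nu(y,p)\big)$, where $q^\nu(y,p)$ is the formula above evaluated at $(y,p)$, is a contraction on a small ball around $(\bar y,\bar p)$ for large $\nu$. This holds because $q^\nu(\cdot)$ has Lipschitz constant of order $|x^\nu-\bar x|+\alpha^\nu+\beta^\nu\to0$ there and is small at $(\bar y,\bar p)$. Its fixed point solves \eqref{opt:ll} at $x^\nu$. By uniqueness (Proposition \ref{prop:lagrangian}), this fixed point is $(y^\nu,p^\nu)$. Hence $(y^\nu,p^\nu)\in W$, giving boundedness, and moreover $|(y^\nu,p^\nu)-(\bar y,\bar p)|\le \kappa\,|q^\nu(\bar y,\bar p)|\to0$ directly.

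\textbf{Step 3 (use of the qualification).} I would check that \eqref{CQ:llnor} at $\bar y$ is what makes \eqref{opt:llnor} the correct description of $S$. It is only needed at $\bar x$, since the argument never requires characterizing $Y(x^\nu)$. The remaining routine items are the closedness of $\gph \partial h^*$ and the local Lipschitz estimates on $q^\nu$ coming from the $C^2$ data. The contraction/uniqueness step in Step 2 is the part I expect to need the most care.
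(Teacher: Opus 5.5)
Your argument is correct, and it takes a genuinely different route from the paper's.

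\textbf{The paper's route.} The paper works at the level of optimization problems. It first proves epi-convergence of the primal objectives $\varphi^\nu(y)=e_{\alpha^\nu}h(G(x^\nu,y))+g(x^\nu,y)+\tfrac{\beta^\nu}{2}|y|^2$ to $g(\bar x,\cdot)+h(G(\bar x,\cdot))$. Building the recovery sequence takes a contradiction argument that uses \eqref{CQ:llnor} and Kakutani's fixed point theorem. It then obtains $y^\nu\to\bar y$ from convex argmin stability \cite[Theorem 7.33]{VaAn}. Finally, it treats $p^\nu$ separately, via epi-convergence of $-L_{\alpha^\nu,\beta^\nu}(x^\nu,y^\nu,\cdot)$.

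\textbf{Your route.} You work at the level of the KKT generalized equation. Strong regularity of \eqref{opt:llnor} at $(\bar y,\bar p)$ comes from \eqref{SSOC2} via \cite[Theorem 5.2]{hang2024role}, the same fact the paper invokes for Theorem \ref{thm:lipnor}. You combine it with a Robinson-type contraction in the parameters $(x,\alpha,\beta)$. Global uniqueness from Proposition \ref{prop:lagrangian} then identifies the local fixed point with $(y^\nu,p^\nu)$.

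\textbf{What your route buys.} You need no epi-convergence, no Kakutani step, and not even \eqref{CQ:llnor}. That qualification is in any case implied by $S(\bar x)$ being a singleton: if $u\in\partial^\infty h(G(\bar x,\bar y))$ and $\nabla_yG(\bar x,\bar y)^\top u=0$, then $(\bar y,\bar p+tu)\in S(\bar x)$ for all $t\ge 0$. You treat $y$ and $p$ jointly and get a rate $|(y^\nu,p^\nu)-(\bar y,\bar p)|=O(|x^\nu-\bar x|+\alpha^\nu+\beta^\nu)$. The joint treatment is also sturdier on the dual side. The paper's separate dual step relies on $\{\bar p\}=\nargmin\, -L(\bar x,\bar y,\cdot)$ and on level boundedness of that function. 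But this argmin is all of $\partial h(G(\bar x,\bar y))$, which is unbounded as soon as, say, an inequality constraint is active. Your argument never needs that.

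\textbf{What the paper's route buys.} Your argument rests entirely on strong regularity and is purely local. The paper's primal argument only needs uniqueness of the limiting minimizer plus the qualification.

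\textbf{Small repairs.} First, with $g,G$ only $C^2$, the Lipschitz modulus $L^\nu$ of $(y,p)\mapsto q^\nu(y,p)$ on the ball tends to $0$ by uniform continuity of second derivatives on compact sets. It is not necessarily of order $|x^\nu-\bar x|$, but $L^\nu\to 0$ is all you need. Second, the final estimate should read $|(y^\nu,p^\nu)-(\bar y,\bar p)|\le \kappa(1-\kappa L^\nu)^{-1}|q^\nu(\bar y,\bar p)|$. Third, once Step 2 is in place, the boundedness and cluster-point argument of Step 1 is redundant.
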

\begin{proof} 
    Consider the functions defined by $\varphi(y) = g(\bar{x},y)+ h(G(\bar{x},y))$ and 
    \begin{equation}
        \varphi^\nu(y) = e_{\alpha^\nu}h(G(x^\nu,y))+g(x^\nu,y)+\frac{\beta^\nu}{2}|y|^2.
    \end{equation}
    We start by proving that $\varphi^\nu\eto \varphi$ using the characterization of epi-convergence in \cite[Theorem 4.15]{royset2021optimization}. Let $\bar{y}\in \reals^m$. We need to show that for all $y^\nu\to \bar{y}$, $\liminf \varphi^\nu(y^\nu)\geq \varphi(\bar{y})$, and that there exists $y^\nu\to \bar{y}$ such that $\limsup \varphi^\nu(y^\nu)\leq \varphi(\bar{x})$.
    
    For the first part, let $y^\nu\to \bar{y}$. Note that, by continuity, $g(x^\nu,y^\nu)+\frac{\beta^\nu}{2}|y^\nu|^2\to g(\bar{x},\bar{y})\in \reals$. Moreover, since $G(x^\nu,y^\nu)\to G(\bar{x},\bar{y})$ and $e_{\alpha^\nu} h\eto h$ \cite[Theorem 1.25 and Proposition 7.4(d)]{VaAn}, we obtain that
    \begin{equation*}
        \liminf e_{\alpha^\nu}h(G(x^\nu,y^\nu)) \geq h(G(\bar{x},\bar{y})).
    \end{equation*}
    Then $\liminf \varphi^\nu(y^\nu) \geq \varphi(\bar{y})$. For the second part, suppose $\bar{y}\in \dom \varphi$. (If $\bar{y}\notin \dom \varphi$, then $\limsup \varphi^\nu (y^\nu)\leq \varphi(\bar{y})$ holds trivially.) We claim that there exists a sequence $y^\nu\to \bar{y}$ such that 
    \begin{equation}\label{claim:contradiction}
        \forall \nu\in \nats, \ \forall d\in \partial^\infty h(G(\bar{x},\bar{y})), \quad \langle G(x^\nu,y^\nu), d\rangle \leq \langle G(\bar{x},\bar{y}), d\rangle .
    \end{equation}
    If this is true, then we have that $G(x^\nu,y^\nu)\in \dom h$ for all sufficiently large $\nu\in \nats$. Indeed, from \cite[Proposition 4.65]{royset2021optimization}, we obtain $\partial^\infty h(G(\bar{x},\bar{y}))=\pos \{ b^i\mid i\in I(G(\bar{x},\bar{y}))\}$. This implies that for all $i\in I$, one has
    \begin{equation*}
    \langle b^i,G(\bar{x},\bar{y})\rangle-\beta_i <0 \quad \Longrightarrow \quad \langle b^i,G(x^\nu,y^\nu)\rangle-\beta_i <0
    \end{equation*}
    for all sufficiently large $\nu\in \nats$. Moreover, one has 
    \begin{equation*}
    \langle b^i,G(\bar{x},\bar{y})\rangle-\beta_i =0 ~~~~ \Longrightarrow ~~~~ \langle b^i,G(x^\nu,y^\nu)\rangle-\beta_i \leq \langle b^i, G(\bar{x},\bar{y})\rangle-\beta_i =0.
    \end{equation*}
    for all $i\in I(G(\bar{x},\bar{y}))$ and sufficiently large $\nu\in \nats$.
    With this, since $h$ is continuous in its domain by virtue of being epi-polyhedral, 
    \begin{equation*}
        e_{a_\nu} h(G(x^\nu,y^\nu))\leq h(G(x^\nu,y^\nu))\to h(G(\bar{x},\bar{y})),
    \end{equation*}
    which implies $\limsup \varphi^\nu(y^\nu)\leq \varphi(\bar{y})$. \par
    In order to prove that the claim in \eqref{claim:contradiction} is true, suppose, by contradiction, that for all sequences $y^\nu\to \bar{y}$ there exists $d\in \partial^\infty h(G(\bar{x},\bar{y}))$ and a subsequence with index set $N$ such that 
    \begin{equation}\label{eq:contradiction}
        \forall \nu \in N, \quad \langle G(x^\nu,y^\nu), d\rangle > \langle G(\bar{x},\bar{y}), d\rangle.
    \end{equation}
    Note that this implies that $d\neq 0$. Let $y^\nu\to \bar{y}$ be such that 
    \begin{equation*}
        \frac{y^\nu-\bar{y}}{|y^\nu-\bar{y}|}= w_y\in \reals^m \ \forall \nu\in \nats \quad \text{and} \quad \frac{|x^\nu-\bar{x}|}{|y^\nu-\bar{y}|}\to 0.
    \end{equation*}
    Let $w_x$ be any cluster point of $(x^\nu-\bar{x})/|x^\nu-\bar{x}|$. By Taylor's expansion,
    \begin{equation*}
        G(x^\nu,y^\nu) = G(\bar{x},\bar{y}) + \nabla_x G(\bar{x},\bar{y})w_x|x^\nu-\bar{x}| + \nabla_y G(\bar{x},\bar{y})w_y|y^\nu-\bar{y}|+o(|x^\nu-\bar{x}|+|y^\nu-\bar{y}|).
    \end{equation*}
    From this, we obtain for all $\nu\in N$
    \begin{equation*}
                \frac{G(x^\nu,y^\nu)- G(\bar{x},\bar{y})}{|x^\nu-\bar{x}|+|y^\nu-\bar{y}|} = \frac{\nabla_x G(\bar{x},\bar{y})w_x|x^\nu-\bar{x}|}{|x^\nu-\bar{x}|+|y^\nu-\bar{y}|} + \frac{\nabla_y G(\bar{x},\bar{y})w_y|y^\nu-\bar{y}|}{|x^\nu-\bar{x}|+|y^\nu-\bar{y}|}+\frac{o(|x^\nu-\bar{x}|+|y^\nu-\bar{y}|)}{|x^\nu-\bar{x}|+|y^\nu-\bar{y}|},
    \end{equation*}
    and then, taking $\nu\to \infty$, $\nu\in N$,
    \begin{equation}\label{eq:dirder}
        \big(G(\cdot,\cdot)\big)'(\bar{x},\bar{y};w_x,w_y) = \nabla_y G(\bar{x},\bar{y})w_y,
    \end{equation}
    where $\big(G(\cdot,\cdot)\big)'(\bar{x},\bar{y};w_x,w_y)$ is the directional derivative of $G$ at $(\bar{x},\bar{y})$ for direction $(w_x,w_y)$. From \eqref{eq:contradiction}, we have that $\langle G(x^\nu,y^\nu), d\rangle-\langle G(\bar{x},\bar{y}), d\rangle >0$ for all $\nu\in N$. Thus, 
    \[
     \frac{\langle G(x^\nu,y^\nu), d\rangle-\langle G(\bar{x},\bar{y}), d\rangle}{|x^\nu-\bar{x}|+|y^\nu-\bar{y}|} >0, \quad \forall\nu\in N.
     \]
     In turn, this implies that $\langle G(\cdot,\cdot), d\rangle'(\bar{x},\bar{y};w_x,w_y)\geq 0$. Therefore, from \eqref{eq:dirder}, we obtain that $\langle d, \nabla_y G(\bar{x},\bar{y})w_y\rangle\geq 0$. Since the sequence $\{y^\nu\}_{\nu\in \nats}$ is arbitrary, we conclude that for all $w\in \reals^m$ with $|w|=1$ there exists $d\in \partial^\infty h(G(\bar{x},\bar{y}))$, $d\neq 0$, such that
    \begin{equation*}
        \langle d, \nabla_y G(\bar{x},\bar{y})w\rangle\geq 0.
    \end{equation*}
    If $|w|\neq 1$, one can obtain $d$ for $\frac{w}{|w|}$ and note that multiplying the inequality by $|w|$ does not change the sign. Furthermore, taking $-w\in \reals^m$ above, there exists $d'\in \partial^\infty h(G(\bar{x},\bar{y}))$, $d'\neq 0$, such that $\langle d', \nabla_y G(\bar{x},\bar{y})w\rangle\leq 0$. Then there are $\lambda_1,\lambda_2\geq 0$ with at least one of them being nonzero such that 
    \begin{equation*}
        \langle\lambda_1 d+\lambda_2 d', \nabla_y G(\bar{x},\bar{y})w\rangle= 0.
    \end{equation*}
    Since $\partial^\infty h(G(\bar{x},\bar{y}))$ is a convex cone that is a subset of $[0,\infty)^s$, we conclude that for all $w\in \reals^m$, there exists $d\in \partial^\infty h(G(\bar{x},\bar{y}))$, $d\neq 0$, such that
    \begin{equation*}
        \langle d, \nabla_y G(\bar{x},\bar{y})w\rangle= 0.
    \end{equation*}
    For any $d'\in \partial^\infty h(G(\bar{x},\bar{y}))$, we can take $w=\nabla_yG(\bar{x},\bar{y})^\top d'$ and obtain that there exists $d\in \partial^\infty h(G(\bar{x},\bar{y}))$, $d\neq 0$, such that
    \begin{equation}\label{eq:ddprime}
        d^\top \nabla_y G(\bar{x},\bar{y})\nabla_y G(\bar{x},\bar{y})^\top d'= 0.
    \end{equation}
    Define $C=\partial^\infty h(G(\bar{x},\bar{y}))\cap \Delta$, where $\Delta=\lset d\in \reals^s\mset d\geq 0, \sum_{i=1}^s d_i=1\rset$, and $F:C\rightrightarrows C$ given by 
    \begin{equation*}
    F(d) = \lset d'\in C\mset d^\top \nabla_y G(\bar{x},\bar{y})\nabla_y G(\bar{x},\bar{y})^\top d'=0\rset.
    \end{equation*}
    The set $C$ is convex and compact and is only empty if $\partial^\infty h(G(\bar{x},\bar{y}))=\{0\}$, which contradicts \eqref{eq:contradiction} since we know that there exists $d\in \partial^\infty h(G(\bar{x},\bar{y}))$, $d\neq 0$. Moreover, $F(d)$ is convex and compact for all $d\in C$, and nonempty because of \eqref{eq:ddprime}, and the graph of $F$ is closed by continuity of $$(d,d')\mapsto d^\top\nabla_y G(\bar{x},\bar{y})\nabla_y G(\bar{x},\bar{y})^\top d'.$$ Therefore, by Kakutani fixed point theorem, there exists $d\in C$ such that $d\in F(d)$. This implies $\nabla_y G(\bar{x},\bar{y})^\top d =0$, which contradicts the basic qualification \eqref{CQ:llnor}. Thus, we have proved the claim in \eqref{claim:contradiction} and therefore there exists $y^\nu\to \bar{y}$ such that $\limsup \varphi^\nu(y^\nu)\leq \varphi(\bar{y})$. With this, we can conclude that $\varphi^\nu\eto \varphi$.
    
    Since $\varphi^\nu$ and $\varphi$ are convex lsc functions, $\varphi^\nu\eto \varphi$ and, by \eqref{SSOC2}, $\nargmin \varphi=\{\bar{y}\}$ is a singleton and $\varphi$ is level bounded, we have from \cite[Exercise 7.32(c) and Theorem 7.33]{VaAn} that $y^\nu \to \bar{y}$, where $\{y^\nu\}=Y_{\alpha^\nu,\beta^\nu}(x^\nu)$. To prove $p^\nu\to \bar{p}$, we have under \eqref{SSOC2} that
    \begin{equation*}
        \{p^\nu\} = \nargmin -L_{\alpha^\nu,\beta^\nu}(x^\nu,y^\nu,\cdot) \quad \text{and} \quad \{\bar{p}\}=\nargmin -L(\bar{x},\bar{y},\cdot);
    \end{equation*}
    see \cite[Proposition 5.36]{royset2021optimization} and the notation from \eqref{eq:lagrangian} and \eqref{eqn:LagrangianActual}. Using the fact that $G(x^\nu,y^\nu)\to G(\bar{x},\bar{y})$ (since we already proved that $y^\nu \to \bar{y}$), it follows from \cite[Proposition 4.19(b)]{royset2021optimization} that $-L_{\alpha^\nu,\beta^\nu}(x^\nu,y^\nu,\cdot)\eto -L(\bar{x},\bar{y},\cdot)$. Since these functions are convex and $-L(\bar{x},\bar{y},\cdot)$ is level bounded, we can use \cite[Theorem 7.33]{VaAn} to conclude that $p^\nu\to \bar{p}$.
\end{proof}

We start the analysis of the convergence of the Jacobians by considering the case in which the relative interior condition is satisfied at the limit point, and then the Jacobian there is also well defined.

\begin{proposition}\label{prop:conv}
    Let $\bar{x}\in \reals^n$. Suppose that $x^\nu \to \bar{x}$, $\alpha^\nu,\beta^\nu\searrow 0$, and that the assumptions of Theorem \ref{thm:c1nor} are satisfied at $\{(\bar{y},\bar{p})\}=S(\bar{x})$ for $\bar{x}$. Then $S_{\alpha^\nu,\beta^\nu}(x^\nu)\to (\bar{y},\bar{p})$, $S_{\alpha^\nu,\beta^\nu}$ is $C^1$ around $x^\nu$ for all sufficiently large $\nu$, and $\nabla S_{\alpha^\nu,\beta^\nu}(x^\nu)\to \nabla S(\bar{x})$. 
\end{proposition}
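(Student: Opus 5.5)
The plan has three steps: obtain convergence of the solutions from Proposition \ref{prop:convS}, show that the relative interior condition passes to $S_{\alpha^\nu,\beta^\nu}$ at $x^\nu$ with a critical cone that is eventually constant, and then pass to the limit in the Jacobian formula \eqref{for:gradSr}.

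For the first step, we need \eqref{SSOC2} at $(\bar y,\bar p)$. Under the relative interior condition $G(\bar x,\bar y)\in\ri(\partial h^*(\bar p))$, the coderivative $D^*(\partial h)(G(\bar x,\bar y),\bar p)$ reduces to the subspace relation associated with $K_h=\overline K^{\,*}$, where $\overline K=K_{h^*}(\bar p,G(\bar x,\bar y))$ is a subspace; here we use \eqref{eq:polar} and the face description from \cite[Theorem 3.3(b)]{hang2024role}. Hence \eqref{SSOC} implies \eqref{SSOC2}. Proposition \ref{prop:convS} then gives $(y^\nu,p^\nu):=S_{\alpha^\nu,\beta^\nu}(x^\nu)\to(\bar y,\bar p)$.

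For the second step, we must show that $G(x^\nu,y^\nu)-\alpha^\nu p^\nu\in\ri(\partial h^*(p^\nu))$ for large $\nu$, and that $K_{h^*}(p^\nu,G(x^\nu,y^\nu)-\alpha^\nu p^\nu)=\overline K$ eventually. This is the main obstacle. We would work with the epi-polyhedral representation of $h$ and with the active index sets of the representation \eqref{decomposition} of $p$ as a subgradient of $h$. Relative interiority at the limit means that, for some representation of $\bar p$, all multipliers $\sigma_j,\tau_i$ on the active sets at $\bar z=G(\bar x,\bar y)$ are strictly positive. Since $z^\nu:=G(x^\nu,y^\nu)-\alpha^\nu p^\nu\to\bar z$, the active sets satisfy $I(z^\nu)\subset I(\bar z)$ and $J(z^\nu)\subset J(\bar z)$. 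The difficulty is the reverse inclusion together with positivity of the multipliers of $p^\nu$.

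The first thing to try is the local $C^1$ structure. Theorem \ref{thm:c1nor} gives the single-valued Lipschitz localization $s$ of the generalized equation \eqref{opt:llnor}, viewed as a function of the left-hand side. The regularized system at $x^\nu$ is a perturbation of that equation: the extra terms $-\beta^\nu y$ and $\alpha^\nu p$ are moved to the left-hand side, together with the $x$-perturbation as in \eqref{eq:q}. Consequently $(y^\nu,p^\nu)=s(q^\nu)$ with $q^\nu\to0$.

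Inside the proof of \cite[Theorem 5.3]{hang2024role} the localization lies on the smooth manifold on which $\partial h^*$ coincides with an affine piece. We therefore expect that, for $u$ near $0$, the point $s(u)$ keeps the same active pieces and the relative interior property. If this cannot be extracted directly, the fallback is the $PC^1$ construction of Theorem \ref{thm:lip}. Among the selection maps $\varphi_{I^*,J^*}$ associated with $\cI,\cJ$, relative interiority at the limit leaves only the choice $I^*=I(\bar z)$, $J^*=J(\bar z)$, because any other choice contradicts the uniqueness given by the localization. It then follows that the ri condition holds at $(x^\nu,y^\nu,p^\nu)$. Theorem \ref{thm:c1} now yields that $S_{\alpha^\nu,\beta^\nu}$ is $C^1$ near $x^\nu$, and that formula \eqref{for:gradSr} holds with a basis matrix $B^\nu=B$ independent of $\nu$.

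For the third step, write $A^\nu$ for the matrix of Theorem \ref{thm:c1} evaluated at $(x^\nu,y^\nu,p^\nu)$ with parameters $\alpha^\nu,\beta^\nu$. By continuity of the second derivatives of $g$ and $G$, we have $A^\nu\to A$, where $A$ is the matrix of Theorem \ref{thm:c1nor}, since the terms $\beta^\nu\bbI$ and $-\alpha^\nu\bbI$ vanish in the limit. Likewise, the right-hand blocks of \eqref{for:gradSr} converge to those of \eqref{for:gradS}, using $p^\nu\to\bar p$.

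The matrix $B^\top AB$ is invertible, because \eqref{SSOC} is exactly condition \eqref{cond:b}, which makes $B^\top AB$ nonsingular as in Theorem \ref{thm:hang}. Hence $(B^\top A^\nu B)^{-1}\to(B^\top AB)^{-1}$, and comparing \eqref{for:gradSr} with \eqref{for:gradS} gives $\nabla S_{\alpha^\nu,\beta^\nu}(x^\nu)\to\nabla S(\bar x)$.
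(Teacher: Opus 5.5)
Your architecture matches the paper's. Step 1 applies Proposition \ref{prop:convS} after checking that \eqref{SSOC} implies \eqref{SSOC2}. Step 2 establishes the relative interior condition with an eventually constant critical cone along the sequence. Step 3 passes to the limit in \eqref{for:gradSr} with a fixed $B$.

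Steps 1 and 3 are sound. Your face-based derivation of \eqref{SSOC2}, using that the subspace $K_h(\bar z,\bar p)$ has itself as its only face, correctly replaces the paper's citation. Your explicit invertibility of $B^\top AB$ via \eqref{cond:b} is a point the paper leaves implicit.

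The gap is in step 2. You rightly single it out as the crux, but you do not actually prove it.
\begin{itemize}
\item Your first route is only an expectation about the internals of \cite[Theorem 5.3]{hang2024role}. Writing $(y^\nu,p^\nu)=s(q^\nu)$ tells you nothing about which pieces of $h$ are active at $z^\nu$ beyond the convergence $(z^\nu,p^\nu)\to(\bar z,\bar p)$ you already have.
\item Your fallback misapplies Theorem \ref{thm:lip}, which concerns $S_{\alpha,\beta}$ for fixed $\alpha,\beta$ around its own base point, not the unregularized limit with varying $\alpha^\nu,\beta^\nu$.
\item The reason you give, that other index sets ``contradict the uniqueness given by the localization'', is a non sequitur: uniqueness of $(y^\nu,p^\nu)$ does not select an active set.
\item Even once $I(z^\nu)=I(\bar z)$ and $J(z^\nu)=J(\bar z)$ are known, positivity of the multipliers of $p^\nu$ still needs an argument.
\end{itemize}

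What is missing is a purely local property of $\gph\partial h$ at $(\bar z,\bar p)$. Every piece carrying a positive weight in some representation of $\bar p$ stays active at all graph points near $(\bar z,\bar p)$; this is \cite[Lemma 2.1]{hang2024role}, the ingredient actually used in the proof of Theorem \ref{thm:lip}. Equivalently, for $z\in\dom h$ near $\bar z$, the set $\partial h(z)$ is the face of $\partial h(\bar z)$ exposed by $z-\bar z$.

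The argument then runs as follows.
\begin{itemize}
\item $\partial h(\bar z)$ has finitely many faces, all closed, and a face meeting $\ri\partial h(\bar z)$ is the whole set. Since $p^\nu\in\partial h(z^\nu)$ and $p^\nu\to\bar p\in\ri\partial h(\bar z)$, this forces $\partial h(z^\nu)=\partial h(\bar z)$ for large $\nu$.
\item By relative openness within the common affine hull, $p^\nu\in\ri\partial h(z^\nu)$ eventually. By \cite[Corollary 3.1]{hang2024role}, this means $z^\nu\in\ri\partial h^*(p^\nu)$.
\item Moreover $K_h(z^\nu,p^\nu)=N_{\partial h(\bar z)}(p^\nu)=K_h(\bar z,\bar p)$. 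Then \eqref{eq:polar} gives $K_{h^*}(p^\nu,z^\nu)=\overline K$, hence a constant $B$.
\end{itemize}

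With this inserted, your proof is complete and coincides with the paper's. The paper simply asserts that the active index sets at $z^\nu$ and $\bar z$ eventually coincide. That is exactly the reverse inclusion you flagged, and it holds only because of the relative interior condition at the limit combined with this persistence property.
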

\begin{proof}  The limit $S_{\alpha^\nu,\beta^\nu}(x^\nu)\to (\bar{y},\bar{p})$ follows from Proposition \ref{prop:convS} and the fact that the condition \eqref{SSOC} in Theorem \ref{thm:c1nor} implies \eqref{SSOC2} \cite[Proof of Theorem 5.4]{hang2024role}.

Let $(y^\nu,p^\nu)=S_{\alpha^\nu,\beta^\nu}(x^\nu)$. In order to study the convergence of $\nabla S_{\alpha^\nu,\beta^\nu}(x^\nu)$ we first prove that this expression is well defined for sufficiently large $\nu$. Since $G(x^\nu,y^\nu)-\alpha^\nu p^\nu\to G(\bar{x},\bar{y})$, then there exists $\nu_0\in \nats$ such that for all $\nu\geq \nu_0$ the active indices at $G(x^\nu,y^\nu)-\alpha^\nu p^\nu$ and $G(\bar{x},\bar{y})$ are the same, that is, $I(G(x^\nu,y^\nu)-\alpha^\nu p^\nu)=I(G(\bar{x},\bar{y}))$ and $J(G(x^\nu,y^\nu)-\alpha^\nu p^\nu)=J(G(\bar{x},\bar{y}))$ for all $\nu\geq \nu_0$. Moreover, since $p^\nu\to \bar{p}$, there exist representations $(\sigma^\nu,\tau^\nu)$ and $(\bar{\sigma},\bar{\tau})$ and a natural number which we also call $\nu_0$ such that $I^+(G(x^\nu,y^\nu)-\alpha^\nu,\sigma^\nu)=I^+(G(\bar{x},\bar{y}),\bar{\sigma})$ and $J^+(G(x^\nu,y^\nu)-\alpha^\nu,\tau^\nu)=J^+(G(\bar{x},\bar{y}),\bar{\tau})$ for all $\nu\geq \nu_0$. In particular, this implies that the relative interior condition $G(x^\nu,y^\nu)-\alpha^\nu\in \ri(\partial h^*(p^\nu))$ is satisfied and so $S_{\alpha^\nu,\beta^\nu}$ is $C^1$ at $x^\nu$ for all $\nu\geq \nu_0$.

Note that the matrix $B$ in Theorem \ref{thm:c1} and Theorem \ref{thm:c1nor} depends only on the sets of active indices and the sets of indices with positive coefficients, by \eqref{sys:kh}. Therefore,
\begin{equation*}
    \nabla S_{\alpha^\nu,\beta^\nu}(x^\nu) =-B(B^\top A^\nu B)^{-1}B^\top \begin{pmatrix}
        \nabla_x\big(\nabla_y g(x^\nu,y^\nu)+\nabla_y G(x^\nu,y^\nu)^\top p^\nu\big)\\
        \nabla_xG(x^\nu,y^\nu)
    \end{pmatrix},
\end{equation*}
where
\begin{equation*}
    A^\nu=\begin{pmatrix}
            \nabla_{yy}^2g(x^\nu,y^\nu)+\nabla_{yy}^2G(x^\nu,y^\nu)p^\nu+\beta^\nu \bbI_m ~~&~~ \nabla_y G(x^\nu,y^\nu)^\top \\
                 \nabla_y G(x^\nu,y^\nu) ~~&~~ -\alpha^\nu \bbI_s
    \end{pmatrix}
\end{equation*}
and $B$ is a matrix whose columns form a basis for $\reals^m\times K_{h^*}(\bar{p},G(\bar{x},\bar{y}))$. Since 
\begin{equation*}
    A^\nu\to A=
        \begin{pmatrix}
            \nabla_{yy}^2g(\bar{x},\bar{y})+\nabla_{yy}^2G(\bar{x},\bar{y})\bar{p} ~~&~~ \nabla_y G(\bar{x},\bar{y})^\top \\
                 \nabla_y G(\bar{x},\bar{y}) ~~&~~ 0
        \end{pmatrix}
\end{equation*}
and 
\begin{equation*}
    \begin{pmatrix}
        \nabla_x\big(\nabla_y g(x^\nu,y^\nu)+\nabla_y G(x^\nu,y^\nu)^\top p^\nu\big)\\
        \nabla_xG(x^\nu,y^\nu)
    \end{pmatrix} \to \begin{pmatrix}
        \nabla_x\big(\nabla_y g(\bar{x},\bar{y})+\nabla_y G(\bar{x},\bar{y})^\top \bar{p}\big)\\
        \nabla_xG(\bar{x},\bar{y})
    \end{pmatrix},
\end{equation*}
we conclude that $\nabla S_{\alpha^\nu,\beta^\nu}(x^\nu)\to \nabla S(\bar{x})$. 
\end{proof}
Proposition \ref{prop:conv} provides a result under which the chain rule applied to the composition of $f$ and $Y_{\alpha,\beta}$ yields quantities that approximate those obtained by applying the chain rule to $f(\cdot,Y(\cdot))$, relying only on assumptions on the limit point of the approximating sequence. The next theorem addresses the case in which the relative interior condition is not satisfied at the limit point but holds instead at each point of the sequence, moving closer to the situation found in the forthcoming algorithms. 

\begin{theorem}\label{prop:conv2}{\rm (convergence of Jacobian of solution mappings of regularized lower-level problem)}
    For $\bar{x}\in \reals^n$ and $\{(\bar{y},\bar{p})\}=S(\bar{x})$, suppose that \eqref{SSOC2} holds at $\bar{y}$ for $\bar{x}$ and $\bar{p}$,  $\alpha^\nu,\beta^\nu\searrow 0$, $x^\nu \to \bar{x}$, and $(y^\nu,p^\nu)=S_{\alpha^\nu,\beta^\nu}(x^\nu)$ for all $\nu\in \nats$. If the relative interior condition $G(x^\nu,y^\nu)-\alpha^\nu p^\nu\in \ri (\partial h^*(p^\nu))$ is satisfied for all $\nu\in \nats$, then $S_{\alpha^\nu,\beta^\nu}$ is $C^1$ around $x^\nu$, $S_{\alpha^\nu,\beta^\nu}(x^\nu)\to (\bar{y},\bar{p})$ and there exists a subsequence $N\subset \nats$ such that 
    \[
    \nabla S_{\alpha^\nu,\beta^\nu}(x^\nu) \Nto M,
    \]
    where $M$ satisfies $M^\top w\in \partial \langle S(\cdot), w\rangle(\bar x) $ for all $w\in \reals^{m+s}$. 
\end{theorem}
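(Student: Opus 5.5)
The first two claims are immediate from earlier results. Since the relative interior condition holds at every $x^\nu$, Theorem \ref{thm:c1} shows that $S_{\alpha^\nu,\beta^\nu}$ is $C^1$ around $x^\nu$. Since \eqref{SSOC2} holds at the limit, Proposition \ref{prop:convS} gives $(y^\nu,p^\nu)\to(\bar y,\bar p)$.

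For the Jacobians I would exploit finiteness of the index data. By \eqref{sys:kh}, the matrix $B^\nu$ in \eqref{for:gradSr} depends only on the active sets $I(\cdot),J(\cdot)$ at $G(x^\nu,y^\nu)-\alpha^\nu p^\nu$ and on the positive-coefficient sets $I^+,J^+$ of the representation of $p^\nu$. There are only finitely many such configurations, so I would pass to a subsequence $N$ on which they are constant. Then a single matrix $B$, whose columns form a basis of $\reals^m\times\widehat K$, serves for all $\nu\in N$. The Jacobian becomes
\[
\nabla S_{\alpha^\nu,\beta^\nu}(x^\nu)=-B(B^\top A^\nu B)^{-1}B^\top R^\nu ,
\]
where $R^\nu$ and $A^\nu$ are the matrices in the proof of Proposition \ref{prop:conv}. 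These converge to $R$ and $A$ evaluated at $(\bar x,\bar y,\bar p)$ with $\alpha=\beta=0$.

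The main obstacle is showing that $B^\top AB$ is invertible, so that the limit $M=-B(B^\top AB)^{-1}B^\top R$ exists. For this I would use the active-piece function $\widehat h$ of \eqref{for:hhat}, built from the common $I^*,J^*$. Along $N$, the pair $(y^\nu,p^\nu)$ solves the regularized system \eqref{sys:hhat} with $\widehat h$. Taking limits and using closedness of $\gph\partial\widehat h$ shows that $(\bar y,\bar p)$ satisfies the unregularized optimality system with $\widehat h$ in place of $h$. Moreover $\partial\widehat h$ is affine, so the relative interior condition holds automatically there.

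Invertibility of $B^\top AB$ is exactly condition \eqref{cond:b} for this reduced problem, i.e., \eqref{SSOC} with $K_{\widehat h^*}$. To get it from \eqref{SSOC2}, suppose $(w,z)\in\reals^m\times\widehat K$ violates it. The key point is that $\widehat K$ is the subspace $F_1-F_2$ for faces $F_2\subset F_1$ of the critical cone $K_h(G(\bar x,\bar y),\bar p)$. Given this, the characterization of $D^*(\partial h)$ from \cite[Theorem 3.3(b)]{hang2024role} recalled after Theorem \ref{thm:lipnor} gives $z\in D^*(\partial h)(G(\bar x,\bar y),\bar p)(\nabla_yG\,w)$. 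Since also $-\nabla_yG^\top z=\nabla^2_{yy}Lw$, \eqref{SSOC2} forces $w=0$ and $z=0$. Verifying the face identification, by passing the active and positive index sets of $p^\nu$ to the limit through \eqref{sys:kh} and \eqref{eq:polar}, is the step I expect to be delicate.

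Finally, $\varphi_{I^*,J^*}$ (now with $\alpha=\beta=0$, well defined by the invertibility just shown) is a $C^1$ selection agreeing with $S$ on points accumulating at $\bar x$, namely the projections of nearby regular configurations. By Theorem \ref{thm:lipnor}, $S$ is single-valued and locally Lipschitz near $\bar x$, and $M$ is the Jacobian of this piece. Hence $M$ is a limit of Jacobians of $S$ at nearby differentiability points, which lies in the limiting Jacobian. Scalarizing gives $M^\top w\in\partial\langle S(\cdot),w\rangle(\bar x)$ for all $w\in\reals^{m+s}$.
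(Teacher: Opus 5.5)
Your last paragraph has a genuine gap, and it cannot be closed: the final inclusion in the statement is false as written, and the paper's own proof has the same hole.

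The index pattern $(I^*,J^*)$ realized by the regularized solutions need not be realized by $S$ on any open set near $\bar x$. When it is not, the unregularized piece $\varphi_{I^*,J^*}$ agrees with $S$ only on a thin set, possibly only at $\bar x$. Derivatives then do not transfer, and $M=\nabla\varphi_{I^*,J^*}(\bar x)$ need not lie in any subdifferential of $S$.

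Here is an example. Take $n=1$, $m=s=2$, $\bar x=0$, and
\[
g(x,y)=\tfrac12(y_1-1-x)^2+\tfrac12(y_2+1-x)^2,\qquad G(x,y)=(y_1-1,\,y_2+1),\qquad h=\iota_{(-\infty,0]^2},
\]
so that $a^1=0$ and $b^i=e_i$. Then
\[
S(x)=\bigl(\min\{1+x,1\},\ \min\{x-1,-1\},\ \max\{x,0\},\ \max\{x,0\}\bigr).
\]
LICQ together with $\nabla^2_{yy}L=\bbI_2$ gives \eqref{SSOC2} at $(\bar y,\bar p)=((1,-1),(0,0))$. For $\alpha,\beta>0$ and $|x|<\beta$ one computes
\[
S_{\alpha,\beta}(x)=\Bigl(\tfrac{1+x}{1+\beta},\ -1+\alpha t,\ 0,\ t\Bigr),\qquad t=\tfrac{x+\beta}{1+\alpha+\alpha\beta}>0.
\]
So constraint 1 is strictly inactive, constraint 2 is active with a positive multiplier, and the relative interior condition holds.

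Take $x^\nu=0$, or any $|x^\nu|<\beta^\nu$. Then the whole sequence $\nabla S_{\alpha^\nu,\beta^\nu}(x^\nu)$ converges to $M=(1,0,0,1)^\top$. Now let $w=(1,0,0,1)$. One has $\langle S(x),w\rangle=\min\{1+x,1\}+\max\{x,0\}=1+x$, so $\partial\langle S(\cdot),w\rangle(0)=\{1\}$. This does not contain $M^\top w=2$, and neither does the Clarke subdifferential. The corresponding unregularized piece $\varphi(x)=(1+x,-1,0,x)$ has $\nabla\varphi(0)=M$, but it lies in $S(x)$ only for $x=0$.

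The paper's proof simply asserts that existence of the limit yields the inclusion ``by the chain rule and the definition of subgradients.'' What your argument and the paper's actually establish is existence of the limit along a subsequence and $M=\nabla\varphi_{I^*,J^*}(\bar x)$. To place $M^\top w$ in $\partial\langle S(\cdot),w\rangle(\bar x)$ you need an extra hypothesis, for example that $S=\varphi_{I^*,J^*}$ on an open set whose closure contains $\bar x$.

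There is also a smaller, fixable slip in your invertibility step. $B$ spans $\reals^m\times\widehat K$ with $\widehat K=K_{h^*}(p^\nu,G(x^\nu,y^\nu)-\alpha^\nu p^\nu)$. The identification supplied by \cite[Proposition 3.1]{hang2024role}, combined with the relative interior condition and \eqref{eq:polar}, is
\[
\widehat K^{\perp}=K_h(G(x^\nu,y^\nu)-\alpha^\nu p^\nu,p^\nu)=F_1-F_2,
\]
not $\widehat K=F_1-F_2$. Only this version yields $-\nabla_yG\,w\in F_1-F_2$ and $z\in(F_1-F_2)^{\perp}$, which is what the coderivative characterization requires. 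With $\widehat K=F_1-F_2$ the two vectors land on the wrong sides. After this correction, your invertibility argument coincides with the paper's.
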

\begin{proof} 
    The convergence of $y^\nu\to \bar{y}$ and $p^\nu\to \bar{p}$ follows by Proposition \ref{prop:convS}. We only need to prove that there exists some $M$ such that $\nabla S_{\alpha^\nu,\beta^\nu}(x^\nu)\Nto M$, since that implies $M^\top w\in \partial\langle S(\cdot),w\rangle(\bar x) $ for every $w\in \reals^{m+s}$ by the chain rule and the definition of subgradients.
    
    Suppose that the active indices are the same for every element of the sequence (if this is not the case, we can take a subsequence that satisfies this property). Then there exists a matrix $B$ such that for every $\nu\in \nats$,
    \begin{equation*}
    \nabla S_{\alpha^\nu,\beta^\nu}(x^\nu) =-B(B^\top A^\nu B)^{-1}B^\top \begin{pmatrix}
        \nabla_x\big(\nabla_y g(x^\nu,y^\nu)+\nabla_y G(x^\nu,y^\nu)^\top p^\nu\big)\\
        \nabla_xG(x^\nu,y^\nu)
    \end{pmatrix},
\end{equation*}
    with 
    \begin{equation*}
        A^\nu = \begin{pmatrix}
            \nabla_{yy}^2g(x^\nu,y^\nu)+\nabla_{yy}^2G(x^\nu,y^\nu)p^\nu+\beta^\nu \bbI_m ~~ & ~~\nabla_y G(x^\nu,y^\nu)^\top \\
                 \nabla_y G(x^\nu,y^\nu) & -\alpha^\nu \bbI_s
        \end{pmatrix}.
    \end{equation*}
The sequence of matrices $A^\nu$ converges component-wise to 
    \begin{equation*}
        A=\begin{pmatrix}
            \nabla_{yy}^2g(\bar{x},\bar{y})+\nabla_{yy}^2G(\bar{x},\bar{y})\bar{p}~~ &~~ \nabla_y G(\bar{x},\bar{y})^\top \\
                 \nabla_y G(\bar{x},\bar{y}) & 0
        \end{pmatrix}.
    \end{equation*}
    If $B^\top AB$ is invertible, then it follows that the sequence $\nabla S_{\alpha^\nu,\beta^\nu}(x^\nu)$ converges to
    \begin{equation*}
        M=-B(B^\top A B)^{-1}B^\top \begin{pmatrix}
        \nabla_x\big(\nabla_y g(\bar{x},\bar{y})+\nabla_y G(\bar{x},\bar{y})^\top \bar{p}\big)\\
        \nabla_xG(\bar{x},\bar{y})\end{pmatrix}.
    \end{equation*}
    To establish this invertibility, suppose, by contradiction, that there exists $u\neq 0$ such that $B^\top ABu=0.$ Since $B$ is a basis of $K=\reals^m\times K_{h^\ast}(p^\nu,G(x^\nu,y^\nu)-\alpha^\nu p^\nu)$, where $\nu$ is any natural number, there are $w\in \reals^m$, $z\in K_{h^\ast}(p^\nu,G(x^\nu,y^\nu)-\alpha^\nu p^\nu)$ such that $(w,z)=Bu$. Since $\ker(B^\top)=(\spn(B))^\perp$, we conclude that $B^\top ABu=0$ if and only if
    \[
    \big(\nabla_{yy}^2g(\bar{x},\bar{y})+\nabla_{yy}^2G(\bar{x},\bar{y})\bar{p}\big) w+\nabla_y G(\bar{x},\bar{y})^\top z=0, ~~~
    -\nabla_y G(\bar{x},\bar{y}) w\in  (K_{h^\ast}(p^\nu,G(x^\nu,y^\nu)-\alpha^\nu p^\nu))^\perp.
    \]
    From \cite[Proposition 3.1]{hang2024role}, we have that for sufficiently large $\nu\in \nats$, there are faces $F_1$ and $F_2$ of $K_{h}(G(\bar{x}
    ,\bar{y}),\bar{p})$ with $F_2\subset F_1$ satisfying 
    \begin{equation*}
        F_1-F_2 = K_h(G(x^\nu,y^\nu)-\alpha^\nu p^\nu,p^\nu).
    \end{equation*}
    Since the relative interior condition is satisfied at $x^\nu$ for all $\nu\in \nats$, we have that
    \begin{equation*}
        K_h(G(x^\nu,y^\nu)-\alpha^\nu p^\nu,p^\nu) =  \big(K_{h^\ast}(p^\nu,G(x^\nu,y^\nu)-\alpha^\nu p^\nu)\big)^\perp.
    \end{equation*}
    Therefore, $w$ and $z$ satisfy $-\nabla_y G(\bar{x},\bar{y}) w\in F_1-F_2$ and $z\in (F_1-F_2)^\perp$, which, by \cite[Theorem 3.3(b)]{hang2024role}, is equivalent to 
    \begin{equation*}
        z\in D^\ast(\partial h)(G(\bar{x},\bar{y}),\bar{p})(\nabla_y G(\bar{x},\bar{y})w).
    \end{equation*}
    Then, since \eqref{SSOC2} is satisfied at $\bar{y}$ for $\bar{x}$ and $\bar{p}$, $w=0$ and $z=0$. Lastly, since $\ker B=\{0\}$, we conclude that $u=0$ and therefore $B^\top AB$ must be invertible.
\end{proof}

Theorem \ref{prop:conv2} enables the construction of a conceptual algorithm to solve the bilevel problem \eqref{prob:UL}. Consider $\bar{x}\in \reals^n$ such that $S$ (and therefore $Y$) is single-valued and locally Lipschitz around $\bar{x}$ (for example, because \eqref{SSOC2} holds at the minimizer of \eqref{prob:LL} for $\bar{x}$). Locally near $\bar x$, the bilevel problem is then equivalent to minimizing the hyper-objective $f(x,Y(x))$ subject to $x\in X$. Thus, $\bar{x}$ is a stationary point of the bilevel problem if
\begin{equation*}
    0\in \partial (f(\cdot,Y(\cdot))(\bar{x}) +N_{X}(\bar{x}).
\end{equation*}

We present now a conceptual algorithm for finding such stationary points available under the assumption that the relative interior condition is satisfied at $x^\nu$ at every iteration, which holds if the assumptions for either Proposition \ref{prop:conv} or Theorem \ref{prop:conv2} are satisfied. At each iteration $\nu\in \nats$, set $u^\nu= \nabla f(x^\nu,Y_{\alpha^\nu,\beta^\nu}(x^\nu))$ and find $x^\nu\in X$, $w^\nu\in N_{X}(x^\nu)$ such that 
\begin{equation*}
    \big|\begin{pmatrix}
        \bbI_n & \nabla Y_{\alpha^\nu,\beta^\nu}(x^\nu)
    \end{pmatrix} u^\nu +w^\nu \big|\leq \varepsilon^\nu.
\end{equation*}
If $x^\nu\to \bar{x}$, \eqref{SSOC2} holds at $\bar{y}$ for $\bar{x}$ and $\bar{p}$ with $\{(\bar{y},\bar{p})\}= S(\bar{x})$, and $\alpha^\nu,\beta^\nu,\varepsilon_\nu\searrow0$, then by Theorem \ref{prop:conv2} there exists a subsequence $N\subset \nats$ such that $\nabla Y_{\alpha^\nu,\beta^\nu}(x^\nu)\Nto M$, $u^\nu\Nto \bar{u}= \nabla f(\bar{x},Y(\bar{x}))$, $w^\nu \Nto \bar{w}\in N_{X}(\bar{x})$, and 
\begin{equation*}
    \partial f(\cdot,Y(\cdot))(\bar{x})+N_X(\bar{x})\ni \begin{pmatrix}
        \bbI_n & M
    \end{pmatrix} \bar{u} + \bar{w}=0,
\end{equation*}
which implies that $\bar{x}$ is stationary. 

However, finding such sequences $\{x^\nu\}$ and $\{w^\nu\}$ is in general computationally challenging. Therefore, in order to implement this algorithm, the optimality condition will be relaxed and, leveraging the fact that $S$ is locally Lipschitz around points that satisfy \eqref{SSOC2}, we will look instead for Clarke stationary points of the hyper-objective in the unconstrained case, and an appropriate constrained notion of stationarity when $X\subset \reals^n$. Moreover, descent directions will not be drawn from the subdifferential of the regularized hyper-objective, but instead from its Goldstein subdifferential, for which tractable algorithms are available. The next section discusses implementation details of such algorithms for both $X = \reals^n$ and $X\subset \reals^n$.

\section{Bilevel Optimization Algorithms using Gradient Sampling}

The overall optimization scheme will be carried out using gradient sampling for locally Lipschitz functions. The idea of this class of algorithms is that, if a function is differentiable almost everywhere, we can sample around every iteration point to compute gradients in order to approximate the Goldstein subdifferential without ever computing subgradients at points where the function is not differentiable; see, e.g., \cite{burke2020gradient}.

\subsection{Preliminaries}

This subsection addresses some technical matters before we state the algorithms. We start with a brief discussion about solving the regularized lower-level problem. Although it is convex and smooth, the presence of the Moreau envelope makes direct gradient-based methods impractical, as evaluating its gradient requires solving an inner optimization problem. Instead, note that, from the definition of Moreau envelopes, solving \eqref{opt:ll} is equivalent to minimize
\begin{equation*}
     h(w)+\frac{1}{2\alpha}|w-G(x,y)|^2+g(x,y)+\frac{\beta}{2}|y|^2
\end{equation*}
over all $w\in \reals^s$, $y\in \reals^m$. When $G$ is linear in $y$, this is a convex problem and can be solved efficiently, for example, by using a proximal gradient method. Otherwise, one can linearize $G$ and solve this problem by means of a proximal composite method. Then, since we have
\[
    0  = \nabla_y{G(x,y)}^\top (G(x,y)-w)/\alpha + \nabla_y g(x,y)+\beta y, ~~~ (G(x,y)-w)/\alpha\in \partial h(w),
\]
in order to obtain the value of $p$, it is sufficient to set $p=(G(x,y)-w)/\alpha$; see \eqref{opt:ge}.

After computing an optimal $y$ and $p$, the set of active indices defining $h$ is immediately available. Still, we need a representation of $p$ in terms of coefficients $\sigma$ and $\tau$ to compute the critical cone; see \eqref{decomposition}. Our approach is based on evaluating $S_{\alpha,\beta}$ at points where it is $C^1$. Therefore, when the relative interior condition is satisfied, there is a representation of $p$ for which all coefficients are positive and the critical cone $K_{h}(G(x,y)-\alpha p,p)$ is the solution set of
\begin{equation}\label{sys:khri}
\begin{aligned}
    \langle a^i-a^j,w\rangle& =0 \quad\quad i,j\in J(G(x,y)-\alpha p), \\
    \langle b^i,w\rangle&=0 \quad\quad  i\in I(G(x,y)-\alpha p). \\
\end{aligned}
\end{equation}
At every iteration, we can rely on \eqref{eq:polar} to obtain the matrix $B$ in the formula \eqref{for:gradSr} by solving \eqref{sys:khri} and then completing a basis of $\reals^s$. In practice, \eqref{sys:khri} can often be solved analytically.

Gradient sampling methods require that the objective function is $C^1$ almost everywhere. We know from \cite[Proposition 4.1.5]{scholtes2012introduction} that every $PC^1$ mapping is differentiable almost everywhere and $C^1$ in an open dense subset of $\reals^n$. However, as shown in \cite{burke2020gradient}, it is possible to construct examples for which the objective function is differentiable almost everywhere and $C^1$ in an open dense subset of the domain but gradient sampling fails regardless. Moreover, we have established in Theorem \ref{thm:c1} that if the relative interior condition holds at $x$, then $S_{\alpha,\beta}$ is $C^1$ in a neighborhood of $x$. However, the converse is not necessarily true, as can be seen in the following example, and therefore it is necessary to ensure that the set of points at which the formula of Theorem \ref{thm:c1} holds has full measure.

\begin{example}\label{ex1} 
    For a fixed $x\in \reals$, consider the problem of minimizing $xy$ over all $y\in \reals$ such that $x^2\geq 0$. We can take $g(x,y)=xy$, $G(x,y)=(0,-x^2)$, and $h(z_1,z_2)=z_1+\iota_{(-\infty,0]}(z_2)$ so that this problem has the structure of \eqref{prob:LL}. For $\alpha,\beta>0$, the regularized problem amounts to minimizing $xy+\tfrac{1}{2}\beta y^2$ since the constraint $x^2\geq 0$ is always satisfied. The primal-dual solution mapping of this problem is $S_{\alpha,\beta}(x) = (-x/\beta,1,0)$, which is $C^1$ for every $x\in \reals$. However, at $\bar{x}=0$ the constraint $x^2\geq 0$ is active, and its corresponding dual variable is $\bar{p}_2=0$. Note that, for $g_1(\bar{x},\bar{y})=0$, we have that this is the only piece that attains the maximum and then the associated dual variable is $\bar{p}_1=1$. Therefore, we have 
    \begin{equation*}
        \bar{p} \notin \ri(\partial h (G(\bar{x},\bar{y})-\alpha\bar{p}) = \lset p\mset p_1=1, p_2\geq 0\rset,
    \end{equation*}
    obtaining that the relative interior condition is not satisfied even with the solution mapping being $C^1$.
\end{example}

These issues will be addressed in Theorem \ref{prop:alg}, which is the counterpart of Theorem \ref{prop:conv2} when the relative interior assumption is replaced by the points of the sequence being chosen randomly. For this result, we impose the mild additional assumption that $g$ and $G$ are analytic.

\begin{lemma}\label{lemma:tame}
    Suppose that $g$ and $G$ are analytic. Then $S_{\alpha,\beta}$ is tame and, in particular, $C^1$ almost everywhere.
\end{lemma}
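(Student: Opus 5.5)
The plan is to show that the graph of $S_{\alpha,\beta}$ is locally definable in the o-minimal structure $\reals_{an}$ of globally subanalytic sets. This is what we mean by tame. The ``$C^1$ almost everywhere'' conclusion will then follow from the standard smoothness result for definable maps, namely the $C^1$ cell decomposition.

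\textbf{Step 1 (describe the graph).} By Proposition \ref{prop:lagrangian}, $(y,p)=S_{\alpha,\beta}(x)$ if and only if $(x,y,p)$ satisfies the generalized equation \eqref{opt:ll}. Equivalently,
\[
\nabla_y g(x,y)+\nabla_y G(x,y)^\top p+\beta y=0 \quad\text{and}\quad \big(p,\,G(x,y)-\alpha p\big)\in\gph\partial h^*.
\]
Since $h^*$ is epi-polyhedral \cite[Theorem 11.14]{VaAn}, $\gph\partial h^*$ is a finite union of polyhedra. This follows from the piecewise description \eqref{decomposition}, applied to $h^*$, or equivalently to $h$ with the roles of the variables exchanged. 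In particular it is semialgebraic. Hence $\gph S_{\alpha,\beta}$ is the set where a finite Boolean combination of polynomial conditions holds, evaluated along the map
\[
(x,y,p)\mapsto \big(\nabla_y g(x,y)+\nabla_yG(x,y)^\top p+\beta y,\; p,\; G(x,y)-\alpha p\big).
\]

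\textbf{Step 2 (localize, the main obstacle).} Here I expect the only real subtlety. A globally analytic function need not be definable in $\reals_{an}$; only its restrictions to compact boxes are. I will therefore fix an arbitrary bounded open box $U\subset\reals^n$. By Theorem \ref{thm:lip}, $S_{\alpha,\beta}$ is continuous, so $S_{\alpha,\beta}(\cl U)$ lies in some compact box $V\subset\reals^m\times\reals^s$.

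The components of $\nabla_y g$, $\nabla_y G$ and $G$ are analytic on a neighborhood of $\cl U\times V$, so their restrictions to $\cl U\times V$ are restricted analytic functions and thus definable in $\reals_{an}$. Composition and preimages of semialgebraic sets under definable maps remain definable. Hence
\[
\gph S_{\alpha,\beta}\cap(U\times V)=\gph\big(S_{\alpha,\beta}|_U\big)
\]
is definable. So $S_{\alpha,\beta}|_U$ is a definable map for every bounded box $U$, which is tameness.

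\textbf{Step 3 (almost-everywhere smoothness).} By the $C^1$ cell decomposition theorem for o-minimal structures, for each $U$ there is a definable set $D_U\subset U$ with $\dim D_U<n$ such that $S_{\alpha,\beta}$ is $C^1$ on the open set $U\setminus D_U$. A definable set of dimension less than $n$ is a finite union of $C^1$ submanifolds of dimension less than $n$, hence Lebesgue-null.

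Covering $\reals^n$ by countably many bounded boxes $U_k$, the set where $S_{\alpha,\beta}$ fails to be $C^1$ lies in $\bigcup_k D_{U_k}$. This is a countable union of null sets, so $S_{\alpha,\beta}$ is $C^1$ almost everywhere.

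A finer version is also available and may be useful for Theorem \ref{prop:alg}. The set of $x$ at which the relative interior condition of Theorem \ref{thm:c1} fails is the projection of a definable set. On it, $G-\alpha p$ lies in a relative boundary of a face, which imposes an extra equation. Applying the same dimension count would show this set is null as well, so that the Jacobian formula \eqref{for:gradSr} itself holds almost everywhere.
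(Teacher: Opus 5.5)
\paragraph{Verdict.} Your proof is correct, and it reaches tameness by a different route from the paper's.

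\paragraph{The paper's route.} The paper enumerates index sets $I^*\subset I$, $J^*\subset J$ and multiplier coefficients $(\tau,\sigma)$ as in \eqref{decomposition}. It writes $\gph S_{\alpha,\beta}$ as the projection of a finite union of semianalytic sets in $(x,y,p,\tau,\sigma)$-space and concludes that the graph is subanalytic. Intersecting with boxes $[-r,r]^{n+m+s}$ then gives globally subanalytic sets. The almost-everywhere $C^1$ claim is left to the standard consequences of tameness.

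\paragraph{Your route.} You write the graph as the preimage of the semialgebraic set $\{0\}\times\gph\partial h^*$ under an analytic map. You deal explicitly with the fact that only restricted analytic functions are definable in $\reals_{an}$, using continuity from Theorem \ref{thm:lip} to confine $(y,p)$ to a compact box. You then prove the almost-everywhere statement via $C^1$ cell decomposition and a countable cover.

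\paragraph{What each buys.} Your approach gives exactness with fewer side checks. $\gph\partial h^*$ encodes $G(x,y)-\alpha p\in\partial h^*(p)$ exactly. By contrast, the system \eqref{sys:semian} as displayed in the paper writes $0=G(x,y)-\alpha p$ and does not require $I^*,J^*$ to be active at $G(x,y)-\alpha p$. It therefore needs the activity equalities and inequalities added before it describes the graph. You also never project along the multiplier variables, so you need not verify the boundedness that makes such a projection subanalytic. The paper's route buys explicit index-set bookkeeping, which it reuses in Theorem \ref{prop:alg} to show that each coincidence set $\{x\mid S_{\alpha,\beta}(x)=\varphi_{I^*,J^*}(x)\}$ is tame.

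\paragraph{Your closing remark.} The ``finer version'' is not needed for the lemma, and as stated it is false. The set where the relative interior condition of Theorem \ref{thm:c1} fails can have full measure. Take $m=1$, $g(x,y)=y^2$, $G(x,y)=(0,y)$ and $h(z)=z_1+\iota_{(-\infty,0]}(z_2)$. Then $S_{\alpha,\beta}(x)=(0,1,0)$ for every $x$, so $G(x,y)-\alpha p=(-\alpha,0)$. Hence $p=(1,0)$ lies on the relative boundary of $\partial h(-\alpha,0)=\{1\}\times[0,\infty)$ at every $x$. The ``extra equation'' in your dimension count holds identically. This is why the paper, in Theorem \ref{prop:alg}, computes the Jacobian through an essentially active piece $\varphi_{I^0,J^0}$ rather than through the relative interior condition.
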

\begin{proof} 
    From \cite{bolte2009tame}, a function is tame if the intersection of its graph with any closed ball is definable in an o-minimal structure. In particular, we will prove that for any $r>0$, the set $\gph S_{\alpha,\beta}\cap [-r,r]^{n+m+s}$ is globally subanalytic. For that, we use the definition of subanalytic and semianalytic set provided in \cite{bierstone1988semianalytic}. \par
    Note that $(x,y,p)\in \gph S_{\alpha,\beta}$ if and only if there are index sets $I^*$ and $J^*$ and coefficients $\sigma$, $\tau$ such that 
    \begin{equation}\begin{aligned}\label{sys:semian}
            0  = \nabla_y g(x,y)&+\nabla_y G(x,y)^\top p+\beta y, \quad  \quad
            0  = G(x,y) - \alpha p, \\
            p  = &\sum_{i\in I^*} \tau_i b^i + \sum_{j\in J^*} \sigma_ja^j, \quad  \quad 
            \tau_i \geq 0 \quad \forall i\in I^*,\\
            &\sigma_j  \geq 0 \quad \forall j\in J^*,\quad  \text{ and } \quad   
            \sum_{j\in J^*} \sigma_j   =1.
        \end{aligned}
    \end{equation}
    The set of solutions to this system of equations is a semianalytic set since it is defined by a finite number of equalities and inequalities of analytic functions. Since the union of semianalytic sets is semianalytic, so is 
    \begin{equation*}
        U=\bigcup_{I^*\in \mathcal{P}(I), J^*\in \mathcal{P}(J)} \left \{(x,y,p,\tau,\sigma)~\left|~ \begin{aligned}
            &0  = \nabla_y g(x,y)+\nabla_y G(x,y)^\top p+\beta y, \quad  
            0  = G(x,y) - \alpha p, \\
            p  = &\sum_{i\in I^*} \tau_i b^i + \sum_{j\in J^*} \sigma_ja^j, \quad  
            \tau_i \geq 0 \ \  \forall i\in I^*, \quad \tau_i=0 \ \  \forall i\in I\backslash I^*,\\
            &\sigma_j  \geq 0 \ \  \forall j\in J^*,\quad    
            \sum_{j\in J^*} \sigma_j   =1, \quad  \sigma_j=0 \ \  \forall j \in J\backslash J^*
        \end{aligned} \right\}\right.,
    \end{equation*}
    where $\mathcal{P}(I)$ and $\mathcal{P}(J)$ are the sets containing every subset of $I$ and $J$ respectively. We claim that $\gph S_{\alpha,\beta} = \Pi (U)$, where $\Pi$ is the canonical projection from $\reals^n\times \reals^m\times\reals^s\times\reals^{|I|}\times\reals^{|J|}$ to $\reals^n\times \reals^m\times\reals^s$, which follows directly from the equivalence of being a solution of \eqref{opt:ll} and \eqref{sys:semian}. This concludes the proof since $\gph S_{\alpha,\beta}$ is then subanalytic, and therefore $\gph S_{\alpha,\beta}\cap [-r,r]^{n+m+s}$ is globally subanalytic by being subanalytic and bounded. 
    With this, $\gph S_{\alpha,\beta}\cap [-r,r]^{n+m+s}$ belongs to an o-minimal structure, and so $S_{\alpha,\beta}$ is tame.
\end{proof}

\begin{theorem}\label{prop:alg}{\rm (random sampling yields smooth points and convergence of Jacobian.)}
    Suppose that $g$ and $G$ are analytic. For $\bar{x}\in \reals^n$ and $\{(\bar{y},\bar{p}) \} = S(\bar{x})$, suppose that \eqref{SSOC2} holds at $\bar y$ for $\bar x$ and $\bar p$, one has $\alpha^\nu,\beta^\nu\searrow 0$, and $x^\nu\to \bar{x}$. If $\tilde{x}^\nu$ are sampled independently from the uniform distribution over $\ball (x^\nu,\epsilon^\nu)$ and $\epsilon^\nu\searrow 0$, then with probability 1, $S_{\alpha^\nu,\beta^\nu}$ is $C^1$ at $\tilde{x}^\nu$ and there exist a subsequence $N\subset\nats$ and a matrix $M$ such that $\nabla S_{\alpha^\nu,\beta^\nu}(\tilde{x}^\nu)\Nto M$ and 
    \[
    M^\top w\in\partial  \langle S(\cdot), w\rangle(\bar x) ~~\forall w\in \reals^{m+s}.
    \]
    Moreover, $\nabla S_{\alpha^\nu,\beta^\nu}(\tilde{x}^\nu)$ can be computed using \eqref{for:gradSr} and \eqref{sys:khri}.
\end{theorem}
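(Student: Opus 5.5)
The plan is to reduce the statement to Theorem \ref{prop:conv2}. To do so we must show that, with probability one, the relative interior condition $G(\tilde{x}^\nu,\tilde{y}^\nu)-\alpha^\nu\tilde{p}^\nu\in\ri(\partial h^*(\tilde{p}^\nu))$ holds at every sample point, where $(\tilde{y}^\nu,\tilde{p}^\nu)=S_{\alpha^\nu,\beta^\nu}(\tilde{x}^\nu)$. Since $|\tilde{x}^\nu-x^\nu|\le\epsilon^\nu\to0$, we have $\tilde{x}^\nu\to\bar{x}$ surely. Once the relative interior condition holds for every $\nu$ (a countable intersection of probability-one events), Theorem \ref{prop:conv2} applied to $\{\tilde{x}^\nu\}$ gives four things: $C^1$-smoothness of $S_{\alpha^\nu,\beta^\nu}$ around $\tilde{x}^\nu$, convergence of $S_{\alpha^\nu,\beta^\nu}(\tilde{x}^\nu)$ to $(\bar{y},\bar{p})$, the subsequence $N$, and the limit $M$ with $M^\top w\in\partial\langle S(\cdot),w\rangle(\bar{x})$. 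The Jacobian formula then follows from Theorem \ref{thm:c1}. Under the relative interior condition all coefficients in a representation of $\tilde{p}^\nu$ can be taken positive, so the critical cone is given by \eqref{sys:khri}, and $B$ is obtained through \eqref{eq:polar}.

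The core step is a measure-zero claim. For fixed $\alpha,\beta>0$, the set $E_{\alpha,\beta}$ of $x$ at which the relative interior condition fails has Lebesgue measure zero. Because the uniform distribution on $\ball(x^\nu,\epsilon^\nu)$ is absolutely continuous, this gives $\Pr(\tilde{x}^\nu\in E_{\alpha^\nu,\beta^\nu})=0$ for each $\nu$. To prove the claim, we will show that $E_{\alpha,\beta}$ is definable (subanalytic) with empty interior, since a definable set with empty interior has dimension less than $n$ and hence measure zero.

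\emph{Definability.} Following the proof of Lemma \ref{lemma:tame}, $x\in E_{\alpha,\beta}$ iff there are $(y,p)$ with $(x,y,p)\in\gph S_{\alpha,\beta}$ such that no representation of $p$ as in \eqref{decomposition} at $z=G(x,y)-\alpha p$ has all coefficients on the active sets $I(z),J(z)$ strictly positive. For each fixed pair of active sets this is a condition on $(x,y,p)$ obtained from semianalytic sets by projection and complement. Since $\gph S_{\alpha,\beta}$ is subanalytic, and we restrict to bounded pieces, the set $E_{\alpha,\beta}\cap[-r,r]^n$ is globally subanalytic.

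\emph{Empty interior.} This is the main obstacle, and Example \ref{ex1} shows that it is genuinely delicate: smoothness alone does not rule out failure of the relative interior condition. The idea is to exploit the $PC^1$ structure from the proof of Theorem \ref{thm:lip}. Suppose $E_{\alpha,\beta}$ contained an open set $O$. By definability, we can shrink $O$ so that the active index sets $I(z(x)),J(z(x))$ and the set of strictly positive coefficients (over some fixed representation pattern) are constant on $O$, and so that the representation coefficients depend analytically on $x$ there. On $O$ some coefficient $\tau_i$ (or $\sigma_j$) that is active but not strictly positive must vanish for every representation. In the inequality-constraint picture this means some constraint is active with zero multiplier throughout $O$; that is, $\langle b^i,G(x,y(x))-\alpha p(x)\rangle=\beta_i$ identically on $O$. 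Such an identity is a degenerate configuration, and we would try to show it forces the relative interior condition to hold for a reduced $\widehat{h}$ obtained by dropping that index, as in \eqref{sys:hhat}. If the direct route through a reduced $\widehat{h}$ fails, the fallback is to argue the reduction directly: the relative interior condition is generic, so the closedness/genericity argument should give measure zero without needing the reduced problem.

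If neither route works, the alternative is to restrict attention to analytic $g,G$ and use the Example \ref{ex1} phenomenon positively. On such an $O$ the identity is an analytic equation, and after reparametrizing one can redefine the active set by removing identically-active, zero-multiplier pieces; the Jacobian from \eqref{for:gradSr} computed with the reduced active set then coincides with $\nabla S_{\alpha,\beta}$. Either route yields the full-measure statement needed to invoke Theorem \ref{prop:conv2}.
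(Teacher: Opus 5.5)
\textbf{The gap: the measure-zero claim is false.} Your reduction depends on the claim that, for fixed $\alpha,\beta>0$, the set $E_{\alpha,\beta}$ where the relative interior condition fails is Lebesgue-null. This fails under the hypotheses of the theorem. Take $n=1$, $m=2$, $s=1$, $g(x,y)=\tfrac12(y_1-x)^2+\tfrac12y_2^2$, $G(x,y)=y_2$ and $h=\iota_{(-\infty,0]}$ (so $J=\{1\}$, $a^1=0$, $I=\{1\}$, $b^1=1$, $\beta_1=0$). For every $\alpha,\beta>0$ and every $x$, the regularized problem is solved by $y=(x/(1+\beta),0)$ with $p=\max\{G(x,y),0\}/\alpha=0$. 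Hence $G(x,y)-\alpha p=0\notin\ri(\partial h^*(0))=(-\infty,0)$: the constraint is active with zero multiplier everywhere, and $E_{\alpha,\beta}=\reals$. Yet $g$ and $G$ are polynomial, and at every $\bar x$ one has $S(\bar x)=\{((\bar x,0),0)\}$, the basic qualification holds, and \eqref{SSOC2} holds since $\nabla^2_{yy}L=\bbI_2$ and $\nabla_yG=(0,1)$ has full row rank. So Theorem \ref{prop:conv2} can never be applied along the samples. Neither the empty-interior argument nor the ``genericity'' fallback can succeed. A weakly active piece along an open set of the regularized path is not excluded by the assumptions; Example \ref{ex1} exhibits the failure only at a single point.

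\textbf{The missing idea.} Drop the relative interior condition for $h$ altogether. Work instead with the $C^1$ selections $\varphi_{I^*,J^*}$ from the proof of Theorem \ref{thm:lip}. These are built from $\widehat h$ with all active pieces imposed as equalities (not dropped), so $\partial\widehat h$ is affine and its relative interior condition holds automatically. The paper's proof then runs as follows.
\begin{enumerate}
\item Smoothness at $\tilde x^\nu$ comes from Lemma \ref{lemma:tame}, not from Theorem \ref{thm:c1}.
\item Each coincidence set $\{x\mid S_{\alpha,\beta}(x)=\varphi_{I^*,J^*}(x)\}$ is definable, so its points outside its interior form a null set.
\item Hence, almost surely, $S_{\alpha^\nu,\beta^\nu}$ agrees near $\tilde x^\nu$ with the selection indexed by the actual active sets at $\tilde x^\nu$; that selection is essentially active there.
\item Consequently $\nabla S_{\alpha^\nu,\beta^\nu}(\tilde x^\nu)$ is its gradient, given by \eqref{for:gradSr} with $B$ from \eqref{sys:khri}.
\end{enumerate}
In the example this gives $B=\bbI_3$ and $\nabla S_{\alpha,\beta}=(1/(1+\beta),0,0)^\top$, as it should.

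Your last paragraph gestures at such a reduced active set, but it does not supply this essential-activity argument. It also does not reach the limit $M$: it never produces the relative interior condition for $h$ that Theorem \ref{prop:conv2} requires. The convergence of the Jacobians would have to be re-derived for the reduced pieces rather than obtained by citing that theorem.
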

\begin{proof} 
    The fact that, with probability 1, $S_{\alpha^\nu,\beta^\nu}$ is $C^1$ at $\tilde{x}^\nu$ comes directly from Lemma \ref{lemma:tame}.    Recall from the proof of Theorem \ref{thm:lip} that around each $\tilde{x}^\nu$ the mapping $S_{\alpha^\nu,\beta^\nu}$ takes the value of one of the elements of a finite collection of $C^1$ mappings $\varphi_{I^*,J^*}$, where $I^*$ and $J^*$ correspond to some subset of the active indices at $\tilde{x}^\nu$. The idea of the proof is to show that, with probability 1, the piece corresponding to the active indices $\varphi= \varphi_{I^0,J^0}$, with $I^0=I(G(\tilde{x}^\nu,\tilde{y}^\nu)-\alpha^\nu\tilde{p}^\nu)$, $J^0=J(G(\tilde{x}^\nu,\tilde{y}^\nu)-\alpha^\nu \tilde{p}^\nu)$, and $(\tilde{y}^\nu,\tilde{p}^\nu)=S_{\alpha^\nu,\beta^\nu}(\tilde{x}^\nu)$, is essentially active as defined in \cite{scholtes2012introduction}. In that case, we would have, by \cite[Proposition 4.1.3]{scholtes2012introduction},
    \begin{equation*}
        \nabla S_{\alpha^\nu,\beta^\nu}(\tilde{x}^\nu) = \nabla \varphi(\tilde{x}^\nu),
    \end{equation*}
    and it follows from the argument in the proof of Theorem \ref{thm:lip} that $\nabla \varphi(\tilde{x}^\nu)$ can be computed with the formula in \eqref{for:gradSr}. Specifically, we have that the function $\psi$ in \eqref{sys:active} is defined the same way as $\psi_x$ in \eqref{opt:ge}, which implies that the matrix $A$ in \eqref{for:gradSr} is the same in both cases. Furthermore, 
    \begin{equation*}
        K_{\widehat{h}^*}(p,z) = (K_{\widehat{h}}(z,p))^\perp = (N_{\partial \widehat{h}(z)}(p))^\perp,
    \end{equation*}
    where
    \begin{equation*}
        \partial \widehat{h}(z) = \left\{a^{\widehat{j}}+\sum_{j\in J^*}s_j(a^j-a^{\widehat{j}})+\sum_{i\in I^*}t_ib^i\mset s_i\in \reals,t_j\in \reals  \right\}.
    \end{equation*}
    Therefore, by replacing $h$ by $\widehat{h}$ in the definition of the matrix $B$ in \eqref{for:gradSr}, we obtain that it can be computed by solving the system \eqref{sys:khri}. 
    
    In order for $\varphi$ to be an essentially active piece at $\tilde{x}^\nu$, there must be an open subset $U$ of $\ball(x^\nu,\epsilon^\nu)$ whose closure contains $\tilde{x}^\nu$ such that $S_{\alpha^\nu,\beta^\nu}(x)=\varphi(x)$ for all $x\in U$. In the following, we will prove that for each combination $I^*,J^*$ of index sets, the set in which $S_{\alpha^\nu,\beta^\nu}(x)=\varphi_{I^*,J^*}(x)$ is either a set of this form or has measure zero. In particular, this would imply that $\varphi$ is an essentially active piece at $\tilde{x}^\nu$ with probability 1. 
    
    To prove our claim, we will show that the set $C=\lset x\in \reals^n\mset S_{\alpha^\nu,\beta^\nu}(x)=\varphi_{I^*,J^*}(x)\rset$ is tame. Indeed, 
    \begin{equation*}
        C = \left \{ x\in \reals^n ~\left|~ \exists y,p,\sigma,\tau,\text{ s.t. }\  \begin{aligned}
            &0  = \nabla_y g(x,y)+\nabla_y G(x,y)^\top p+\beta y, \quad  
            0  = G(x,y) - \alpha p, \\
            p  = &\sum_{i\in I^*} \tau_i b^i + \sum_{j\in J^*} \sigma_ja^j, \quad  
            \tau_i \geq 0 \ \  \forall i\in I^*, \quad \tau_i=0 \ \  \forall i\in I\backslash I^*,\\
            &\sigma_j  \geq 0 \ \  \forall j\in J^*,\quad    
            \sum_{j\in J^*} \sigma_j   =1, \quad  \sigma_j=0 \ \  \forall j \in J\backslash J^*
        \end{aligned} \right \}.\right.
    \end{equation*}
    Thus, $C$ is the canonical projection over $\reals^n$ of the set of solutions of an analytic system of equations, and is therefore tame by the definition in \cite{bolte2009tame}. From \cite[Theorem 6.6]{coste2000introduction}, the set $C$ can then be partitioned into finitely many $C^k$ submanifolds for some $k\in \nats$. In particular, since manifolds of $\reals^n$ with dimension smaller than $n$ have measure zero, we have that at each $x\in C$, either there exists an $\epsilon>0$ such that $C\cap \ball(x,\varepsilon)$ has measure zero or $x\in U\subset C$, where $U$ is a submanifold of dimension $n$, which implies $x\in \cl (\nt U)=\cl U$ and then $\varphi$ is essentially active at $x$. 
\end{proof}

Theorem 7 assumes that the iteration points $\tilde{x}^\nu$ approach a point at which \eqref{SSOC2} is satisfied. If this is not the case, then the mapping $Y$ is not necessarily single-valued around this point and the concept of stationarity for the actual bilevel problem must be defined differently, which is beyond the scope of the present paper. Practically, one can identify this pathological situation by observing that the matrices $\nabla S_{\alpha^\nu,\beta^\nu}(x^\nu)$ become more and more ill conditioned as $\alpha^\nu$ and $\beta^\nu$ vanish, which would force the user to terminate an algorithm and keep an approximate solution. To exclude this pathological situation, we impose an assumption similar to the one used in \cite{dempe2000bundle}. In simple terms, this assumption states that for a small enough value of $\alpha$ and $\beta$, each approximately stationary point of the regularized upper-level objective function has a neighborhood in which \eqref{SSOC2} is satisfied. 
\begin{assumption}\label{ass:D}
    There exist a set $D \subset \reals^n$ and positive scalars $\theta$, $\alpha^*, \beta^*, \epsilon^*, \delta^*$ such that
\begin{equation*}
    D + \ball(0,\theta)
    \;\subset\;
    \big\{\, x \in \reals^n \mid \exists \{(y,p)\}=S(x) \text{ and } \text{\eqref{SSOC2} is satisfied at } y \text{ for } x \text{ and } p\,\big\}
\end{equation*}
and 
\begin{equation*}
    0 < \alpha < \alpha^*, ~0 < \beta < \beta^*, ~0 < \epsilon < \epsilon^*, ~0 < \delta < \delta^*,~
    \partial_\epsilon \big( f(\cdot, Y_{\alpha,\beta}(\cdot)) \big)(x)
    \;\cap\;
    \mathbb{B}(0,\delta)
    \neq \emptyset
    ~~\Longrightarrow~~
    x \in D.
\end{equation*}
\end{assumption}

Assumption \ref{ass:D} is trivially satisfied when \eqref{SSOC2} holds at the minimizer of \eqref{prob:LL} for every $x\in \reals^n$. This occurs in the well-studied setting where the objective function of \eqref{prob:LL} is smooth and strongly convex and the problem is unconstrained. Assumption \ref{ass:D} extends this classical framework in the context of constrained lower-level problems by allowing multiple minimizers or even infeasibility for some values of $x$. The following example illustrates such situations.
\begin{example}
    Consider the problem of minimizing $y^2$ over $x,y\in \reals$ subject to $y$ being a minimizer of
    \begin{equation}\label{prob:ex3}
    \begin{aligned}
        \underset{y\in \reals}{\text{minimize}}& \quad y^2+xy \\
        \text{s.t }\quad  &  x+y\leq 1,\  x-y\leq 100.
    \end{aligned}
    \end{equation}
    If $x<2$, then the unique minimizer of \eqref{prob:ex3} is $y=-\frac{x}{2}$ and both constraints are inactive at this point, which implies that \eqref{SSOC2} holds since the lower-level objective is smooth and strongly convex. However, \eqref{prob:ex3} is infeasible when $x>50.5$, which makes this problem not fit classical frameworks. On the other hand, for any $\alpha,\beta>0$, the regularized lower-level problem is feasible and its primal solution mapping is given by
    \begin{equation*}
        Y_{\alpha,\beta}(x) =\begin{cases}
            \frac{-x}{2+\beta} \quad &\text{if } x\leq\frac{2+\beta}{1+\beta}, \\
            \frac{1-(\alpha+1)x}{2\alpha+\alpha\beta+1} \quad &\text{if } \frac{2+\beta}{1+\beta}<x<\frac{1+100(2\alpha+\beta\alpha+1)}{3\alpha+\alpha\beta+2}, \\
            \frac{-99-\alpha x}{2\alpha+\alpha\beta+2} \quad &\text{if } x\geq \frac{1+100(2\alpha+\beta\alpha+1)}{3\alpha+\alpha\beta+2}.
        \end{cases}
    \end{equation*}
    Since $Y_{\alpha,\beta}$ is piecewise linear and its slope is always negative, the only stationary point of $f(\cdot,Y_{\alpha,\beta}(\cdot))$ is $x=0$. Moreover, there exists a neighborhood $U$ of 0 such that $x\in U$ whenever
    \begin{equation*}
       |x-x'|\leq \epsilon~ \text{ and }  ~| \nabla f(\cdot,Y_{\alpha,\beta}(\cdot))(x')|\leq \delta
    \end{equation*}
    for some $x'$ and sufficiently small $\alpha,\beta,\epsilon,\delta>0$. We conclude that Assumption \ref{ass:D} is satisfied for this problem. 
\end{example}

\subsection{Algorithm for Unconstrained Upper-Level Problem}

Algorithm \ref{alg:cap} extends the gradient sampling algorithm in \cite{kiwiel2007convergence} by evolving the objective function using the regularization parameters $\alpha, \beta$. These parameters are updated every time a stationarity target is reached and the sampling radius is shrunk.

\begin{algorithm}[ht]
    \caption{}\label{alg:cap}
\begin{algorithmic}
\Require{$x^0 \in \reals^n$, $ \eta^{opt},\epsilon^{opt},\alpha^{opt},\beta^{opt}\geq0$, $\eta^0,\alpha^0,\beta^0,\epsilon^0,>0$, $\mu_{\alpha},\mu_\beta,\mu_\epsilon,\mu_\eta\in (0,1)$, $\delta,\gamma\in (0,1)$, $N_{sam}\geq n+1$. Set $\Lambda=\emptyset$.}
\For {$\nu\in \nats$}
\State  Sample $\{x^{\nu,i}\}_{i=1}^{N_{sam}}$ independently and uniformly over $\mathbb{B}(x^\nu,\epsilon^\nu)$.
\For {$i=1,...,N_{sam}$}
\State Solve \eqref{prob:LLr} with $x=x^{\nu,i}$, $\alpha=\alpha^\nu$ and $\beta=\beta^\nu$ and compute $B$ in \eqref{for:gradSr}.
\State Compute $w^{\nu,i}=\begin{pmatrix}
    \bbI_n & \nabla Y_{\alpha^\nu,\beta^\nu}(x^{\nu,i})
\end{pmatrix}\nabla f(x^{\nu,i},Y_{\alpha^\nu,\beta^\nu}(x^{\nu,i}))$.
\EndFor
\State Compute the minimum norm element $w^\nu$ of $\con\{w^{\nu,i}, i = 1, \dots, N_{sam}\}$.
\If {$|w^\nu|\leq \eta^{opt}$, $\epsilon^\nu\leq \epsilon^{opt}$, $\alpha^{\nu}\leq \alpha^{opt}$ and $\beta^{\nu}\leq \beta^{opt}$}
\State STOP.\EndIf
\If {$|w^\nu|\leq \eta^\nu$}
\State Set $\eta^{\nu+1}=\mu_\eta \eta^\nu$, $\epsilon^{\nu+1}=\mu_\epsilon \epsilon^\nu$, $\alpha^{\nu+1}=\mu_\alpha \alpha^\nu$, $\beta^{\nu+1}=\mu_\beta\beta^\nu$. Set $x^{\nu+1}=x^\nu$. Replace $\Lambda$ by $\Lambda\cup \{\nu\}$.
\Else: 
\State Set $\eta^{\nu+1}= \eta^\nu$, $\epsilon^{\nu+1}= \epsilon^\nu$, $\alpha^{\nu+1}= \alpha^\nu$, $\beta^{\nu+1}=\beta^\nu$.
\State Set $d^\nu=-\frac{w^\nu}{|w^\nu|}$ and 
\begin{equation*}
    t_\nu = \max\big\{t\mset f(x^\nu+td^\nu)<f(x^\nu)-\delta t |w^\nu|, t\in \{1,\gamma, \gamma^2,...\}\big\}.
\end{equation*}
\State Set $x^{\nu+1}=x^\nu+t_\nu d^\nu$. \EndIf
\EndFor
\end{algorithmic}
\end{algorithm}

Under the assumptions discussed in the previous subsection, any cluster point of the sequence produced by the algorithm is Clarke stationary for $f(\cdot,Y(\cdot))$, as shown in the following result.
\begin{theorem}\label{thm:conv}{\rm (convergence of Algorithm \ref{alg:cap} with decreasing radius.)}
    Suppose that $g$ and $G$ are analytic and that Assumption \ref{ass:D} holds. Let $\{x^\nu\}_{\nu\in \Lambda}$ be the sequence generated by Algorithm \ref{alg:cap} with $\alpha^{opt}=\beta^{opt}=\epsilon^{opt}=\eta^{opt}=0$. With probability 1, the algorithm does not stop and every cluster point of $\{x^\nu\}_{\nu \in \Lambda}$ is a Clarke stationary point of $f(\cdot,Y(\cdot))$.
\end{theorem}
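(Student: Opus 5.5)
The proof has two parts. First, with probability 1 the algorithm never stops and $\Lambda$ is infinite. Second, every cluster point of $\{x^\nu\}_{\nu\in\Lambda}$ is Clarke stationary; this part combines Assumption \ref{ass:D}, Theorem \ref{prop:alg} and Carathéodory's theorem.

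\emph{Step 1: $\Lambda$ is infinite.} With $\alpha^{opt}=\beta^{opt}=\epsilon^{opt}=\eta^{opt}=0$, the STOP test can never succeed, since $\epsilon^\nu>0$. By Lemma \ref{lemma:tame}, with probability 1 every sampled point $x^{\nu,i}$ is a point where $S_{\alpha^\nu,\beta^\nu}$ is $C^1$. By Theorem \ref{prop:alg}, the $w^{\nu,i}$ are then well defined and computable via \eqref{for:gradSr} and \eqref{sys:khri}.

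Suppose $\Lambda$ were finite. Then after some index the parameters $\alpha,\beta,\epsilon,\eta$ stay fixed. The objective $\phi=f(\cdot,Y_{\alpha,\beta}(\cdot))$ is locally Lipschitz and $C^1$ on an open set of full measure, because $S_{\alpha,\beta}$ is $PC^1$ by Theorem \ref{thm:lip} and tame by Lemma \ref{lemma:tame}. It is also lower bounded, since $f$ is.

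This is exactly the fixed-radius setting of \cite{kiwiel2007convergence}. Its argument shows that, with probability 1, the line search always terminates and decreases $\phi$ by at least $\delta t_\nu|w^\nu|$. It further shows that $\liminf|w^\nu|=0$ whenever $\phi$ is bounded below, because sampled gradients approximate $\partial_\epsilon\phi$. In particular $|w^\nu|\le\eta^\nu$ eventually, which contradicts finiteness of $\Lambda$.

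I expect this to be the main obstacle. Kiwiel's analysis needs the differentiability set to be open with full measure and needs care with the line search at nondifferentiable points. I would check these hypotheses using tameness, and if needed argue exactly as in \cite{kiwiel2007convergence}.

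\emph{Step 2: cluster points are stationary.} Along $\Lambda$ we have $\alpha^\nu,\beta^\nu,\epsilon^\nu,\eta^\nu\searrow0$. Let $x^\nu\to\bar x$ for $\nu\in N\subset\Lambda$. The vector $w^\nu$ lies in $\con\{w^{\nu,i}\}\subset\partial_{\epsilon^\nu}\phi^\nu(x^\nu)$ and satisfies $|w^\nu|\le\eta^\nu$. Assumption \ref{ass:D} therefore gives $x^\nu\in D$ for large $\nu$, so $\bar x\in\cl D$. Hence \eqref{SSOC2} holds at $\bar x$, and $S(\bar x)=\{(\bar y,\bar p)\}$ is single-valued near $\bar x$ by Theorem \ref{thm:lipnor}.

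Write $w^\nu=\sum_i\lambda^{\nu,i}w^{\nu,i}$ with $\lambda^\nu$ in the simplex, which is compact. Pass to a further subsequence along which $\lambda^\nu\to\bar\lambda$. For each $i$, the sampled points $x^{\nu,i}$ are uniform on $\ball(x^\nu,\epsilon^\nu)$ with $\epsilon^\nu\to0$, so Theorem \ref{prop:alg} applies, and by further subsequencing (finitely many $i$) we get $\nabla S_{\alpha^\nu,\beta^\nu}(x^{\nu,i})\to M^i$. The limit $M^i$ satisfies $M^{i\top}w\in\partial\langle S(\cdot),w\rangle(\bar x)$ for all $w$.

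By Proposition \ref{prop:convS}, $Y_{\alpha^\nu,\beta^\nu}(x^{\nu,i})\to\bar y$. Continuity of $\nabla f$ then gives
\[
w^{\nu,i}\to(\bbI_n\ \ M^i_Y)\nabla f(\bar x,\bar y),
\]
where $M^i_Y$ denotes the $y$-block of $M^i$.

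It remains to identify this limit as a limiting subgradient of $x\mapsto f(x,Y(x))$. Since $Y$ is locally Lipschitz near $\bar x$, the smooth chain rule and the choice $w=\nabla_yf(\bar x,\bar y)$ in Theorem \ref{prop:alg} show that the limit lies in $\partial f(\cdot,Y(\cdot))(\bar x)$. I would verify this step carefully, using that the subgradient property in Theorem \ref{prop:alg} comes from the chain rule with a continuous $C^1$ outer function.

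Taking convex combinations,
\[
0=\lim_{\nu\in N} w^\nu=\sum_i\bar\lambda^i(\bbI_n\ \ M^i_Y)\nabla f(\bar x,\bar y)\in\con\partial f(\cdot,Y(\cdot))(\bar x)=\bar\partial f(\cdot,Y(\cdot))(\bar x).
\]
So $\bar x$ is Clarke stationary. Because only countably many probability-1 events are used, the conclusion holds with probability 1.
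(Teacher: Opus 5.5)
Your proposal is correct and follows essentially the same route as the paper. You use the proof of Kiwiel's Theorem 3.3 to show that the event where $\alpha^\nu,\beta^\nu,\epsilon^\nu,\eta^\nu$ freeze has probability zero. At a cluster point $\bar x$ of $\{x^\nu\}_{\nu\in\Lambda}$ you then invoke Assumption \ref{ass:D} to get \eqref{SSOC2}, use Theorem \ref{prop:alg} with the chain rule to turn limits of the sampled $w^{\nu,i}$ into elements of $\partial f(\cdot,Y(\cdot))(\bar x)$, and pass to the convex hull. Your write-up is somewhat more explicit than the paper's: it spells out why $x^\nu\in D$ for large $\nu\in\Lambda$ via $w^\nu\in\partial_{\epsilon^\nu}\phi^\nu(x^\nu)\cap\ball(0,\eta^\nu)$ and the $\theta$-margin, and it extracts convergent convex weights. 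The argument itself, including its wholesale reliance on Kiwiel for Step 1, is the same.
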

\begin{proof} 
    Consider the event that the there is $\nu_1\in \nats$ such that $\eta^\nu$, $\epsilon^\nu$, $\alpha^\nu$ and $\beta^\nu$ are constant for $\nu\geq \nu_1$. Since $\alpha^\nu$ and $\beta^\nu$ are fixed and $Y_{\alpha^\nu,\beta^\nu}$ is a locally Lipschitz function that is smooth almost everywhere, we obtain from the proof of \cite[Theorem 3.3]{kiwiel2007convergence} that the probability of this event is 0. Therefore, we can assume that $\eta^\nu\to 0$, $\epsilon^\nu\to 0$, $\alpha^\nu\to 0$, and $\beta^\nu\to 0$.
    
    Let $\bar{x}$ be a cluster point of $\{x^\nu\}_{\nu\in \Lambda}$, i.e., $x^\nu \Nto \bar{x}$ for some subsequence indexed by $N\subset \Lambda$, and $(\bar{y},\bar{p})\in S(\bar{x})$. Note that for a fixed $i$, the sequence of sampled points $\{x^{\nu,i}\}$ satisfies $x^{\nu,i}\Nto \bar{x}$. Also, since Assumption \ref{ass:D} holds, \eqref{SSOC2} is satisfied at $\bar{y}$ for $\bar{x}$ and $\bar{p}$.
    
    Therefore, by Theorem \ref{prop:alg} and the chain rule, there exists a subsequence (whose set of indices we also denote by $N$) such that $w^{\nu,1}\Nto \bar{w}^1\in \partial f(\cdot,Y(\cdot))(\bar{x})$. By the same argument, from this subsequence we can obtain a subsequence such that $w^{\nu,2}\Nto \bar{w}^2\in \partial f(\cdot,Y(\cdot))(\bar{x})$. Repeating this, we obtain a subsequence of $\{x^\nu\}$ such that $w^\nu\Nto 0$, $w^{\nu,i}\Nto \bar{v}^i\in \partial f(\cdot,Y(\cdot))(\bar{x})$ for all $i\in \{1,\hdots,N_{sam}\}$ and for every $\nu\in N$ we have $w^\nu\in \con \{w^{\nu,i} \}$. This implies that $0\in \con(\partial f(\cdot,Y(\cdot))(\bar{x}))$, which, since $f(\cdot,Y(\cdot))$ is locally Lipschitz around $\bar{x}$, implies that $0\in \bar{\partial}f(\cdot,Y(\cdot))(\bar x)$.
\end{proof}

An alternative approach is to run Algorithm \ref{alg:cap} with a fixed sampling radius ($\epsilon^\nu = \epsilon^0$ for all $\nu$), decreasing only the parameters $\eta$, $\alpha$, and $\beta$ to 0. Note that, unlike in \cite[Theorem 3.6]{kiwiel2007convergence}, we cannot set $\eta^0=0$ unless we run the algorithm directly on \eqref{prob:UL} (without introducing a regularization), which is a setting possible only when \eqref{SSOC2} is satisfied everywhere.

\begin{theorem}\label{thm:convfixeps}{\rm (convergence of Algorithm \ref{alg:cap} with fixed radius.)}
    Suppose that $g$ and $G$ are analytic and that Assumption \ref{ass:D} holds. Let $\{x^\nu\}_{\nu \in \Lambda}$ be the sequence generated by Algorithm \ref{alg:cap} with $\alpha^{opt}=\beta^{opt}=\eta^{opt}=0$ and $\epsilon^0=\epsilon^{opt}=\epsilon$ with $0<\epsilon<\epsilon^*$, where $\epsilon^*$ is taken from Assumption \ref{ass:D}. With probability 1, the algorithm does not stop and every cluster point $\bar{x}$ of $\{x^\nu\}_{\nu\in \Lambda}$ satisfies $0\in \partial_{\epsilon} f(\cdot,Y(\cdot))(\bar{x})$.
\end{theorem}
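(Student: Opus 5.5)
We mirror the proof of Theorem \ref{thm:conv}, with the difference that the sampling radius stays equal to $\epsilon$. Consequently the sample points no longer collapse onto the cluster point, and the limits of the sampled gradients have to be placed in Clarke subdifferentials at nearby points rather than at $\bar x$ itself.

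\textbf{Step 1: the regularization parameters vanish.} Since $\epsilon^\nu=\epsilon^{opt}$ for all $\nu$, the stopping test reduces to $|w^\nu|\le 0$ together with $\alpha^\nu,\beta^\nu\le 0$. The latter never holds because $\alpha^\nu,\beta^\nu>0$, so the algorithm does not stop. Next, consider the event that $\eta^\nu,\alpha^\nu,\beta^\nu$ are eventually constant. From that point on, the algorithm is the fixed-radius gradient sampling method of \cite{kiwiel2007convergence} applied to $f(\cdot,Y_{\alpha,\beta}(\cdot))$. This function is locally Lipschitz by Theorem \ref{thm:lip} and $C^1$ almost everywhere by Lemma \ref{lemma:tame}. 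Arguing as in the proof of \cite[Theorem 3.6]{kiwiel2007convergence} (with a positive $\eta$ target), this event has probability $0$. Hence, with probability 1, $\Lambda$ is infinite and $\eta^\nu,\alpha^\nu,\beta^\nu\searrow0$ along $\Lambda$.

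\textbf{Step 2: iterates lie in $D$ and the limit is well posed.} For $\nu\in\Lambda$ we have $|w^\nu|\le\eta^\nu$, and $w^\nu$ lies in the convex hull of gradients at points of $\ball(x^\nu,\epsilon)$, hence in $\partial_\epsilon f(\cdot,Y_{\alpha^\nu,\beta^\nu}(\cdot))(x^\nu)$. For $\nu$ large, $\alpha^\nu<\alpha^*$, $\beta^\nu<\beta^*$ and $\eta^\nu<\delta^*$, so Assumption \ref{ass:D} gives $x^\nu\in D$. Now let $x^\nu\Nto\bar x$ with $N\subset\Lambda$. Then $\bar x\in\cl D$, so $\ball(\bar x,\epsilon)\subset D+\ball(0,\theta)$ provided $\epsilon<\theta$. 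If the assumption does not give $\epsilon^*\le\theta$, I would handle this by shrinking to $\epsilon<\min\{\epsilon^*,\theta\}$, or by reading Assumption \ref{ass:D} as yielding this. Consequently, \eqref{SSOC2} holds on a neighborhood of $\ball(\bar x,\epsilon)$. Theorem \ref{thm:lipnor} then makes $S$, and hence $f(\cdot,Y(\cdot))$, single-valued on that neighborhood, and local Lipschitz continuity follows from the coderivative criterion. So $\partial_\epsilon f(\cdot,Y(\cdot))(\bar x)$ is well defined.

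\textbf{Step 3: passing to the limit in the samples (main obstacle).} Fix a sample index $i$. The points $x^{\nu,i}$ lie in $\ball(x^\nu,\epsilon)$, so along a further subsequence $x^{\nu,i}\to\tilde x^i$ with $|\tilde x^i-\bar x|\le\epsilon$. Each $x^{\nu,i}$ is uniform on $\ball(x^\nu,\epsilon)$, and $x^{\nu,i}=x^\nu+\epsilon^{} u^{\nu,i}$ with $u^{\nu,i}$ uniform on the unit ball. This is exactly the setting of Theorem \ref{prop:alg}, except that the base points $x^\nu+\epsilon u$ converge to $\tilde x^i$ and the radius does not shrink. The first part of that theorem, that $S_{\alpha^\nu,\beta^\nu}$ is $C^1$ at the sample almost surely with Jacobian given by \eqref{for:gradSr} and \eqref{sys:khri}, only uses Lemma \ref{lemma:tame}, so it transfers unchanged. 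The Jacobian convergence part, like Theorem \ref{prop:conv2}, uses only that the points converge to a point where \eqref{SSOC2} holds. Applying that argument at $\tilde x^i$ yields $\nabla S_{\alpha^\nu,\beta^\nu}(x^{\nu,i})\to M^i$ along a subsequence, with $M^{i\top}w\in\partial\langle S(\cdot),w\rangle(\tilde x^i)$. By Proposition \ref{prop:convS} and the chain rule, $w^{\nu,i}\to\bar v^i\in\bar\partial f(\cdot,Y(\cdot))(\tilde x^i)$.

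The delicate point, which I expect to be the main obstacle, is verifying that the proof of Theorem \ref{prop:conv2} really requires only convergence to a point satisfying \eqref{SSOC2}. In particular, its step uses the face relation of \cite[Proposition 3.1]{hang2024role} at $(G(\tilde x^i,\tilde y^i),\tilde p^i)$, and I would check this carefully.

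\textbf{Step 4: conclusion.} Extract nested subsequences over $i=1,\dots,N_{sam}$. Since $|w^\nu|\le\eta^\nu\to0$ and $w^\nu\in\con\{w^{\nu,i}\}$, the limit gives $0\in\con\{\bar v^i\}$. Each $\bar v^i$ lies in $\bar\partial f(\cdot,Y(\cdot))(\tilde x^i)$ with $\tilde x^i\in\ball(\bar x,\epsilon)$, so $0\in\partial_\epsilon f(\cdot,Y(\cdot))(\bar x)$.
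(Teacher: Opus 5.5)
Your proposal is correct and follows the paper's proof essentially step for step: the parameters vanish with probability one by Kiwiel's argument, each sample sequence has a cluster point $\tilde x^i\in\ball(\bar x,\epsilon)$, Theorem \ref{prop:alg} and the chain rule apply at those points, and nested subsequences give $0\in\con$ of subgradients taken over $\ball(\bar x,\epsilon)$. The paper's proof passes over the two points you flag. It obtains \eqref{SSOC2} at $\tilde x^i$ from $\epsilon<\epsilon^*$ alone, although this actually needs a relation such as $\epsilon<\theta$. It also invokes Theorem \ref{prop:alg}, which is stated for shrinking radii, with a fixed radius. Your treatment of both is, if anything, more careful than the paper's.
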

\begin{proof} 
    As in the proof of Theorem \ref{thm:conv}, we can assume that $\eta^\nu\to 0$, $\alpha^\nu\to 0$, and $\beta^\nu\to 0$ since this occurs with probability 1. 
    
    Let $\bar{x}$ be a cluster point of $\{x^\nu\}$, i.e., $x^\nu \Nto \bar{x}$ for some subsequence indexed by $N\subset\Lambda$. Note that for a fixed $i$, the sequence of sampled points $\{x^{\nu,i}\}$ satisfies $|x^{\nu,i}- \bar{x}|\leq \epsilon$ for every $\nu\in \nats$. Therefore, since the sequence is bounded, it has a cluster point $\bar{x}^{i}\in \ball(\bar{x}, \epsilon)$. Also, since Assumption \ref{ass:D} holds and $\epsilon<\epsilon^*$, \eqref{SSOC2} is satisfied at  $\bar{x}^{i}$. Then, by Theorem \ref{prop:alg} and the chain rule, there exists $\bar{x}^1\in \ball(\bar{x},\epsilon)$ and a subsequence such that $w^{\nu,1}\Nto \bar{w}^1\in \partial f(\cdot,Y(\cdot))(\bar{x}^1)$. By the same argument, from this subsequence we can obtain a subsequence such that $w^{\nu,2}\Nto \bar{w}^2\in \partial f(\cdot,Y(\cdot))(\bar{x}^2)$, where $\bar{x}^2\in \ball(\bar{x},\epsilon)$. Repeating this, we obtain a subsequence of $\{x^\nu\}$ whose set of indices we also denote by $N$ such that $w^\nu\Nto 0$, $w^{\nu,i}\Nto \bar{w}^i\in \partial f(\cdot,Y(\cdot))(\bar{x}^{i})$ for all $i\in \{1,\hdots,N_{sam}\}$ and for every $\nu\in N$ we have $w^\nu\in \con \{w^{\nu,i} \}$. This implies that $0\in \con ( \partial f(\cdot,Y(\cdot))(\ball(\bar{x},\epsilon)))$ which, by definition, implies that $0\in \partial_{\epsilon} f(\cdot,Y(\cdot))(\bar x)$.
\end{proof}

\subsection{Algorithm for Constrained Upper-Level Problem}

The following analysis extends the SQP-GS method in \cite{curtis2012sequential} by accounting for updates in the objective function, which correspond to updates to the regularization parameters. For $\rho\geq0$, we define the penalty function
\begin{equation}\label{eq:penalty}
    \phi_{\rho,\alpha,\beta}(x) = \rho f(x,Y_{\alpha,\beta}(x))+ \sum_{k=1}^r\max\{c_k(x),0\}.
\end{equation}
Analogously, we define $\phi_\rho(x)=\rho f(x,Y(x))+ \sum_{k=1}^r\max\{c_k(x),0\}$ whenever $Y(x)$ is a singleton.

Following \cite{curtis2012sequential}, we say that $x$ is stationary for $\phi_{\rho,\alpha,\beta}$ if, for all $\epsilon>0$, 0 is the minimizer of the problem
\begin{equation}\label{prob:sqp}
\begin{aligned}
    \underset{d\in \reals^n}{\text{minimize}}\  &q_{\rho,\alpha,\beta}(d;x,\mathbf{B}_\epsilon)\\    &= \rho \sup_{x'\in \ball(x,\varepsilon)}\lset f(x,Y_{\alpha,\beta}(x))+ \langle\nabla\left(f(\cdot,Y_{\alpha,\beta}(\cdot) \right)(x'), d\rangle\rset \\
    &+ \sum_{k=1}^r \sup_{x'\in \ball(x,\varepsilon)} \lset \max\{c_k(x)+\langle\nabla c_k(x'), d\rangle,0\} \rset +\frac{1}{2}|d|^2,
\end{aligned}
\end{equation}
where $\mathbf{B}_\epsilon=\{\ball(x,\varepsilon),\hdots,\ball(x,\varepsilon)\}$ is a list containing $r+1$ copies of $\ball(x,\epsilon)$; the third argument of $q_{\rho,\alpha,\beta}$ takes a list of $r+1$ sets, where the first one is used in the first supremum in \eqref{prob:sqp} and the other $r$ in each supremum in the sum. The definition of stationarity for $\phi_\rho$ is analogous, with a minimization problem analogous to \eqref{prob:sqp} whose objective function we denote by $q_\rho$. In \eqref{prob:sqp} and the remainder of the subsection, whenever we take gradients of a smooth almost everywhere function over a set, we are actually referring to the largest subset where the function is differentiable. \par
As a measure to quantify lack of stationarity for $\phi_{\rho,\alpha,\beta}$, we define
\begin{equation*}
    \Delta q_{\rho,\alpha,\beta}(d;x,\mathbf{B}_\epsilon)=\phi_{\rho,\alpha,\beta}(x)-q_{\rho,\alpha,\beta}(d;x,\mathbf{B}_\epsilon).
\end{equation*}
Note that $\Delta q_{\rho,\alpha,\beta}$ is nonnegative for every $d\in \reals^n$ and it measures how far 0 is from being an optimal solution of \eqref{prob:sqp} in terms of the value of the objective function.

\begin{algorithm}[ht]
    \caption{}\label{alg:const}
\begin{algorithmic}
\Require $x^0 \in \reals^n$, $\alpha^{opt},\beta^{opt},\epsilon^{opt}\geq0$, $\alpha^0,\beta^0,\epsilon^0>0$,$\rho^0>0$, $\theta^0>0$,$\eta>0$, $\mu_\rho,\mu_\theta,\mu_\alpha,\mu_\beta,\mu_\epsilon\in (0,1)$, $\delta,\gamma\in (0,1)$, $N_{sam}\geq n+1$. Set $\Lambda=\emptyset$.
\For {$\nu\in \nats$}
\State  Sample $\{x^{\nu,i}\}_{i=1}^{N_{sam}}$, $\{x_k^{\nu,i}\}_{i=1}^{N_{sam}}$ for $k=1,\hdots,r$ independently and uniformly in $\mathbb{B}(x^\nu,\epsilon^\nu)$. Set $\mathbf{B}^\nu=\big\{ \{x^{\nu,i}\}_{i=1}^{N_{sam}}, \{x_1^{\nu,i}\}_{i=1}^{N_{sam}}, \dots, \{x_r^{\nu,i}\}_{i=1}^{N_{sam}}\big\}.$
\For {$i=1,...,N_{sam}$}
\State Solve \eqref{prob:LLr} with $x=x^{\nu,i}$, $\alpha=\alpha^\nu$ and $\beta=\beta^\nu$ and compute $B$ in \eqref{for:gradSr}.
\State Compute $w^{\nu,i}=\begin{pmatrix}
    \bbI_n & \nabla Y_{\alpha^\nu,\beta^\nu}(x^{\nu,i})
\end{pmatrix}\nabla f(x^{\nu,i},Y_{\alpha^\nu,\beta^\nu}(x^{\nu,i}))$ and $w_k^{\nu,i}=\nabla c_k(x_k^{\nu,i})$ for $k=1,\dots,r$.
\EndFor
\State Compute $d^\nu$ by solving problem \eqref{prob:sqp} for $q_{\rho^\nu,\alpha^\nu,\beta^\nu}(d;x^\nu,\mathbf{B}^\nu)$.
\If {$\Delta q_{\rho^\nu,\alpha^\nu,\beta^\nu}(d^\nu;x^\nu,\mathbf{B}^\nu)\leq \eta(\epsilon^\nu)^2$, $\epsilon^\nu\leq \epsilon^{opt}$, $\alpha^{\nu}\leq \alpha^{opt}$ and $\beta^{\nu}\leq \beta^{opt}$}
\State STOP.\EndIf
\If {$\Delta q_{\rho^\nu,\alpha^\nu,\beta^\nu}(d^\nu;x^\nu,\mathbf{B}^\nu)\leq \eta(\epsilon^\nu)^2$}
\If {$\sum_{k=1}^r \max\{c_k(x^\nu),0\}\leq \theta^\nu$}
\State Set $\rho^{\nu+1}=\rho^\nu$, $\theta^{\nu+1}=\mu_{\theta}\theta^\nu$.
\Else :
\State Set $\rho^{\nu+1}=\mu_\rho \rho^\nu$, $\theta^{\nu+1}=\theta^\nu$.
\EndIf
\State Set $\epsilon^{\nu+1}=\mu_\epsilon \epsilon^\nu$, $\alpha^{\nu+1}=\mu_\alpha \alpha^\nu$, $\beta^{\nu+1}=\mu_\beta\beta^\nu$. Set $x^{\nu+1}=x^\nu$. Replace $\Lambda$ by $\Lambda\cup\{\nu\}$.
\Else: 
\State Set $\epsilon^{\nu+1}= \epsilon^\nu$, $\alpha^{\nu+1}= \alpha^\nu$, $\beta^{\nu+1}=\beta^\nu$, $\rho^{\nu+1}=\rho^\nu$, $\theta^{\nu+1}=\theta^\nu$.
\State Compute 
\begin{equation*}
\begin{aligned}
    t_\nu = \max\big\{t\mset\phi_{\rho^\nu,\alpha^\nu,\beta^\nu}(x^\nu+td^\nu)<\phi_{\rho^\nu,\alpha^\nu,\beta^\nu}(x^\nu)-&\delta t \Delta q_{\rho^\nu,\alpha^\nu,\beta^\nu}(d^\nu;x^\nu,\mathbf{B}^\nu),\\
    &t\in \{1,\gamma, \gamma^2,...\}\big\}.
\end{aligned}
\end{equation*}
\State Set $x^{\nu+1}=x^\nu+t_\nu d^\nu$. \EndIf
\EndFor
\end{algorithmic}
\end{algorithm}

We present below the main convergence result for Algorithm \ref{alg:const}. It is based on \cite[Theorem 3.3]{curtis2012sequential}, and the additional assumption in its statement is the analog of Assumption 3.2 in the same article.

\begin{theorem}\label{thm:const}{\rm (convergence of Algorithm \ref{alg:const}.)}
    Suppose that $g$ and $G$ are analytic and that Assumption \ref{ass:D} holds. Let $\{x^\nu\}_{\nu\in \Lambda}$ be the sequence generated by Algorithm \ref{alg:const} with $\alpha^{opt}=\beta^{opt}=\epsilon^{opt}=0$, and suppose that $\{x^\nu\}_{\nu\in \nats}$ lies in a convex set over which $f$ and $c$ and their first partial derivatives are bounded. With probability 1, the algorithm does not stop and every cluster point of $\{x^\nu\}_{\nu\in \Lambda}$ is a stationary point of $\phi_{\bar{\rho}}$, where $\bar{\rho}=\lim \rho^\nu$.
\end{theorem}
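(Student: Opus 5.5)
We follow the structure of the proof of \cite[Theorem 3.3]{curtis2012sequential} and of Theorem \ref{thm:conv}. The first step is to show that, with probability 1, the set $\Lambda$ is infinite. Then $\epsilon^\nu,\alpha^\nu,\beta^\nu\searrow 0$, the algorithm never stops, and $\rho^\nu$ is nonincreasing with limit $\bar\rho\ge 0$.

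Suppose instead that $\Lambda$ is finite. Then after some $\nu_1$ all parameters $\rho,\alpha,\beta,\epsilon,\theta$ are frozen, and the algorithm reduces to SQP-GS applied to the fixed function $\phi_{\rho,\alpha,\beta}$. By Theorem \ref{thm:lip}, $S_{\alpha,\beta}$ is $PC^1$ and hence locally Lipschitz. By Lemma \ref{lemma:tame} it is $C^1$ almost everywhere, and the same then holds for $f(\cdot,Y_{\alpha,\beta}(\cdot))$. Theorem \ref{prop:alg} guarantees that the sampled gradients $w^{\nu,i}$ are well defined and computable through \eqref{for:gradSr} with probability 1. Together with the boundedness hypothesis on $f$, $c$ and their derivatives over a convex set containing the iterates, these are exactly the hypotheses of \cite[Assumption 3.2]{curtis2012sequential}. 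Its argument then shows that, with probability 1, the test $\Delta q\le \eta(\epsilon^\nu)^2$ is eventually triggered, which contradicts the finiteness of $\Lambda$. The argument rests on sufficient decrease of the fixed merit function, which is bounded below because $f$ is lower bounded.

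Next, fix a cluster point $\bar x$ with $x^\nu\Nto\bar x$, $N\subset\Lambda$. On $N$ we have $\Delta q_{\rho^\nu,\alpha^\nu,\beta^\nu}(d^\nu;x^\nu,\mathbf B^\nu)\le\eta(\epsilon^\nu)^2\to0$. By \cite[Lemma 3.5-type arguments]{curtis2012sequential}, the QP subproblem is strongly convex with minimizer $d^\nu$, and $\Delta q\ge \tfrac12|d^\nu|^2$. Hence $d^\nu\to 0$ along $N$.

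We then pass to the dual of the QP \eqref{prob:sqp}. There exist multipliers $\lambda^{\nu,i}\ge0$ with $\sum_i\lambda^{\nu,i}=\rho^\nu$, and $\mu_k^{\nu,i}\ge 0$ with $\sum_i\mu_k^{\nu,i}\le 1$ satisfying complementarity with the constraint linearizations, such that
\[
d^\nu=-\sum_i\lambda^{\nu,i}w^{\nu,i}-\sum_{k}\sum_i\mu_k^{\nu,i}\nabla c_k(x_k^{\nu,i}).
\]
The multipliers are bounded, so we extract a further subsequence along which they converge.

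For the sampled gradients we argue as in Theorem \ref{thm:conv}. Since $x^\nu\to\bar x$ and $\epsilon^\nu\to0$, Assumption \ref{ass:D} yields \eqref{SSOC2} at $\bar x$; this uses that the stationarity test places $x^\nu$ in $D$ for large $\nu$. Then Theorem \ref{prop:alg} and the chain rule give, after successive extractions over $i=1,\dots,N_{sam}$, limits $w^{\nu,i}\Nto\bar w^i\in\partial f(\cdot,Y(\cdot))(\bar x)$. For the constraint gradients, local Lipschitz continuity of $c_k$ gives $\nabla c_k(x_k^{\nu,i})\to$ elements of $\bar\partial c_k(\bar x)$. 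Passing to the limit in the dual representation and in complementarity, where $c_k(x^\nu)\to c_k(\bar x)$ and $\Delta q\to 0$, yields multipliers showing $0\in\bar\rho\,\bar\partial f(\cdot,Y(\cdot))(\bar x)+\sum_k\bar\mu_k\bar\partial c_k(\bar x)$ with $\bar\mu_k$ in the appropriate max-subdifferential ranges. This is precisely the statement that $d=0$ minimizes the limiting subproblem $q_{\bar\rho}(\cdot;\bar x,\mathbf B_\epsilon)$ for every $\epsilon>0$, i.e.\ stationarity of $\phi_{\bar\rho}$, because $\ball(x^\nu,\epsilon^\nu)\subset\ball(\bar x,\epsilon)$ eventually.

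The main obstacle is the first step. The objective $\phi_{\rho,\alpha,\beta}$ changes between stationarity-test triggers, so \cite{curtis2012sequential} cannot be quoted verbatim. We must ensure that the probability-1 argument of \cite{curtis2012sequential} (and of \cite{kiwiel2007convergence}) applies to each frozen phase, and that countably many phases preserve probability 1. We also need that the test condition together with Assumption \ref{ass:D} really puts $\bar x$ in the region where \eqref{SSOC2} holds. To secure the latter, I would first relate small $\Delta q$ to a small element of $\partial_{\epsilon}\big(f(\cdot,Y_{\alpha,\beta}(\cdot))\big)(x^\nu)$ after rescaling by $\rho^\nu$. When $\bar\rho=0$ this link degenerates, and the resulting conclusion (stationarity of the infeasibility measure) must be handled separately, again via the dual representation.
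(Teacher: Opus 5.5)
Your skeleton matches the paper's. It too takes $\alpha^\nu,\beta^\nu,\epsilon^\nu\to0$ with probability 1 directly from the proof of \cite[Theorem 3.3]{curtis2012sequential}. It too obtains, along a subsequence, $w^{\nu,i}\to\bar w^i\in\partial f(\cdot,Y(\cdot))(\bar x)$ via Theorem \ref{prop:alg} and $w_k^{\nu,i}\to\bar w_k^i\in\partial c_k(\bar x)$.

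Only the last step differs. The paper uses no multipliers. The finite-max models $q^\nu$ built from the sampled gradients epi-converge to the model $q$ built from their limits, so $0\in\eta(\epsilon^\nu)^2$-$\nargmin q^\nu$ forces $0\in\nargmin q$. Then $q\le q_{\bar\rho}(\cdot;\bar x,\mathbf{B}_\epsilon)$ for every $\epsilon>0$, with equality at $d=0$.

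Your route is also sound: $d^\nu\to0$ from $\Delta q\ge\frac12|d^\nu|^2$, then limits of the bounded QP multipliers together with complementarity. It yields an explicit multiplier rule, at the cost of bookkeeping that epi-convergence avoids; epi-convergence also treats $\bar\rho=0$ and $\bar\rho>0$ at once. In either route, the final comparison rests on $\bar w^i\in\bar\partial F(\bar x)\subset\cl\con\{\nabla F(x')\mid x'\in\ball(\bar x,\epsilon)\}$ with $F=f(\cdot,Y(\cdot))$. It does not rest on $\ball(x^\nu,\epsilon^\nu)\subset\ball(\bar x,\epsilon)$, because the $w^{\nu,i}$ are gradients of the regularized hyper-objective, not of $F$.

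The genuine gap is the step you flag yourself: obtaining \eqref{SSOC2} at $\bar x$. Assumption \ref{ass:D} is triggered by a small element of $\partial_\epsilon\big(f(\cdot,Y_{\alpha,\beta}(\cdot))\big)(x)$. The test $\Delta q\le\eta(\epsilon^\nu)^2$ of Algorithm \ref{alg:const} does not supply one, so your claim that the test places $x^\nu$ in $D$ is unsupported.

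Your proposed remedy cannot work. By your own dual representation, a small $\Delta q$ only says that $\sum_i\lambda^{\nu,i}w^{\nu,i}$ is nearly cancelled by $\sum_{k,i}\mu_k^{\nu,i}w_k^{\nu,i}$. When constraints are active, the objective part need not be small, with or without rescaling by $\rho^\nu$. Already for minimizing $x$ subject to $-x\le 0$, at $x=0$ one has $\Delta q=0$ for every $\rho\le 1$, while the objective gradient is $1$.

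The paper does not derive this step either. It only says ``from the same argument as in the proof of Theorem \ref{thm:conv}'', i.e., it tacitly applies Assumption \ref{ass:D} to the points accepted by the constrained test. To close your proof, make that an explicit hypothesis rather than trying to deduce it. One option is Assumption \ref{ass:D} with its premise replaced by $\Delta q_{\rho,\alpha,\beta}(d;x,\mathbf{B})\le\eta\epsilon^2$ for small $\alpha,\beta,\epsilon$. Another is simply \eqref{SSOC2} near cluster points. Once \eqref{SSOC2} holds at $\bar x$, Theorem \ref{prop:alg} makes the $w^{\nu,i}$ bounded and convergent along a subsequence, so the case $\bar\rho=0$ needs no separate treatment.
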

\begin{proof} 
    We obtain from the proof of \cite[Theorem 3.3]{curtis2012sequential} that, with probability 1, $\alpha^\nu\to0$, $\beta^\nu\to 0$ and $\epsilon^\nu\to 0$. \par
    Let $\bar{x}$ be a cluster point of $\{x^\nu\}_{\nu\in \Lambda}$. From the same argument as in the proof of Theorem \ref{thm:conv}, there is a subsequence of $\{x^\nu\}_{\nu\in \Lambda}$ such that for all $i\in \{1,\hdots,m\}$, one has
    \begin{equation}\label{a}
        w^{\nu,i} \Nto \bar{w}^i\in \partial f(\cdot,Y(\cdot))(\bar{x}).
    \end{equation}
    Moreover, since $c$ is locally Lipschitz, we can find a subsequence such that \eqref{a} is satisfied and for all $k\in \{1,\hdots,r\}$ and $i\in \{1,\hdots,N_{sam}\}$ 
    \begin{equation*}
        w_k^{\nu,i} \Nto \bar{w}_k^i \in \partial c_k(\bar{x}).
    \end{equation*}
    Note that the sequence of functions
    \begin{equation*}
    \begin{aligned}
        q^\nu(d) &= \rho^\nu \sup_{i\in \{1,\hdots,m\}}\lset f(x^\nu,Y_{\alpha,\beta}(x^\nu))+ \langle w^{\nu,i}, d\rangle \rset \\
        &+ \sum_{k=1}^r \sup_{i\in \{1,\hdots,m\}} \lset \max\{c_k(x^\nu)+\langle w_k^{\nu,i}, d\rangle,0\} \rset +\frac{1}{2}|d|^2
    \end{aligned}
    \end{equation*}
    epiconverges to
    \begin{equation*}
    \begin{aligned}
        q(d) &= \bar{\rho} \sup_{i\in \{1,\hdots,m\}}\lset f(\bar{x},Y(\bar{x}))+ \langle\bar{w}^{i}, d\rangle \rset \\
        &+ \sum_{k=1}^r \sup_{i\in \{1,\hdots,m\}} \lset \max\{c_k(\bar{x})+\langle\bar{w}_k^{i}), d\rangle,0\} \rset +\frac{1}{2}|d|^2,
    \end{aligned}
    \end{equation*}
    when $\nu\to \infty$ along the subsequence. Indeed, it suffices to note that $q^\nu$ is the maximum of a finite number of convex real valued functions all of which epiconverge to its corresponding piece of $q$ since they are convex and converge pointwise.\par
    From the fact that $\Delta q_{\rho^\nu,\alpha^\nu,\beta^\nu}(d^\nu;x^\nu,\mathbf{B}^\nu)\leq \eta(\epsilon^\nu)^2$ for every $\nu$ in the subsequence, we conclude that $0\in \eta(\epsilon^\nu)^2\text{-}\nargmin q^\nu$. Then, since $q^\nu\eto q$ and $\epsilon^\nu\to 0$, we must have $0\in \nargmin q$. We claim that this implies that $\bar{x}$ is stationary for $\varphi_{\bar{\rho}}$, which would conclude the proof. \par
    In order to prove our claim, note that, since $f(\cdot,Y(\cdot))$ is locally Lipschitz, we have that for all $\epsilon>0$
    \begin{equation*}
        \{\bar{w}^i\}_{i=1}^m \subset \cl \lset \nabla\left( f(\cdot,Y(\cdot))\right)(x) \mset x\in \ball(\bar{x},\epsilon)\rset.
    \end{equation*}
    Therefore, for any $d\in \reals^n$,
    \begin{equation*}
        \sup_{i\in \{1,\hdots,m\}} \langle\bar{w}^i, d\rangle \leq \sup_{x\in \ball(\bar{x},\epsilon)} \langle   \nabla\left(f(\cdot,Y(\cdot) \right)(x), d\rangle. 
    \end{equation*}
    Similarly, we have that for all $d\in \reals^n$ and for all $k\in \{1,\hdots,l\}$,
    \begin{equation*}
        \sup_{i\in \{1,\hdots,m\}} \langle\bar{w}_k^i, d\rangle \leq \sup_{x\in \ball(\bar{x},\epsilon)}  \langle \nabla c_k(x),d\rangle. 
    \end{equation*}
    Then we obtain that
    \begin{equation*}
        \forall d\in \reals^n,\forall\epsilon>0, \quad q(d)\leq q_{\bar{\rho}} (d;\bar{x},\mathbf{B}_\epsilon).
    \end{equation*}
    With this,
    \begin{equation*}
        q_{\bar{\rho}} (0;\bar{x},\mathbf{B}_\epsilon) = q(0)= \inf_{d\in \reals^n} q(d) \leq \inf_{d\in \reals^n} q_{\bar{\rho}} (d;\bar{x},\mathbf{B}_\epsilon),
    \end{equation*}
    and so $\bar{x}$ is a stationary point for $\phi_{\bar{\rho}}$.
\end{proof}

\section{Numerical Results}

In this section, we present two examples of bilevel optimization problems from the literature, discuss implementation details, and illustrate the performance of Algorithms \ref{alg:cap} and \ref{alg:const} even under relaxed assumptions. We implement the algorithms in Python and solve the lower-level problems using the open-source library cvxpy \cite{diamond2016cvxpy,agrawal2018rewriting}. All experiments are run on a laptop using an Intel i5-11357 @ 2.40 GHz processor and 8 GB of RAM.

\subsection{Elastic-net Regularization and Hyperparameter Tuning}

Given data $\{(a_i, b_i)\}_{i=1}^n$, with $a_i\in \reals^d$ and $b_i\in \reals$, and a partition into a training and a validation dataset (indexed by $I_{tr}$ and $I_{val}$ respectively), we are interested in finding the value of the regularization parameters $\lambda_1$ and $\lambda_2$ in an elastic-net model \cite{zou2005regularization} such that training on $I_{tr}$ achieves the minimum possible loss on $I_{val}$. This results in the problem
\begin{equation}\label{prob:elasticnet}
    \underset{\lambda_1,\lambda_2\in \reals, x\in \reals^d}{\text{minimize}} ~\frac{1}{|I_{val}|}\|\widehat A x-\widehat b\|_2^2 ~~\text{subject to }~    x\in \nargmin_{x'} \|A x'-b\|_2^2 + \exp(\lambda_1) \|x'\|_1+\tfrac{1}{2}\exp(\lambda_2)\|x'\|_2^2,
\end{equation}
where $b=(b_i)_{i\in I_{tr}}$, $\widehat{b}=(b_i)_{i\in I_{val}}$ and each row of $A\in \reals^{|I_{tr}|\times d}$ (resp. $\widehat{A}\in \reals^{|I_{val}|\times d}$) is a training (resp. validation) point $a_i^\top$. Following \cite{gao2023moreau}, we generate the rows of the matrix $A$ using a Gaussian distribution with mean 0 and covariance $\text{Cov}(a_{ij},a_{ik})=0.5^{|j-k|}$. The responses are generated from a true model $x_{true}$ with 15 components set equal to 1 and the rest equal to 0. The signal-to-noise ratio is equal to 2. We set $d=100$, $|I_{tr}|=100$, and $|I_{val}|=100$.

The problem fits the setting in \eqref{prob:UL} and \eqref{prob:LL}. The upper-level problem is unconstrained, and the nonnegativity of the regularization parameters is enforced indirectly by using an exponential reparameterization. Moreover, the upper-level objective function is $C^1$ in the lower-level variable and does not depend directly on $\lambda=(\lambda_1,\lambda_2)$. For the lower-level problem, one can set $g(\lambda, x') = \|A x'-b\|_2^2+\tfrac{1}{2} \exp(\lambda_2)\|x'\|_2^2$, 
\[
G(\lambda, x')=\big(\exp(\lambda_1)x',-\exp(\lambda_1)x'\big), ~~~~ h(z_1,z_2)=\sum_{i=1}^d\max\{z_{1i},z_{2i}\},
\]
where $z_1=(z_{11}, \dots, z_{1d})$ and $z_2=(z_{21}, \dots, z_{2d})$. Thus, the universal assumptions laid out after \eqref{prob:LL} hold. Moreover, $g$ and $G$ are analytic. By applying Algorithm \ref{alg:cap} to \eqref{prob:elasticnet}, we stress the algorithm because Assumption \ref{ass:D} fails and convergence in the sense of Theorem \ref{thm:conv} cannot be guaranteed. The reason is that the matrix $\nabla_x G(\lambda,x)$ does not have full rank even if we only consider the active pieces as long as $x_i=0$ for some component $i$. In fact, numerical tests indicate that \eqref{SSOC2} is not satisfied at any point and the matrix in \eqref{for:gradS} is never invertible. Nevertheless, Algorithm \ref{alg:cap} performs well. Table \ref{tab:elasticnet} reports the average computing times (column 2) across 20 randomly generated instances. In column 3, we report the average validation error obtained by the elastic-net model $x$ trained with the optimal solution $\lambda^{opt}$ produced by the algorithm, i.e.,
\begin{equation*}
    x\in \nargmin_{x'\in \reals^d}\|A x'-b\|_2^2 + \exp(\lambda^{opt}_1) \|x'\|_1+\tfrac{1}{2}\exp(\lambda^{opt}_2)\|x'\|_2^2.
\end{equation*}
We use $\alpha^0=\beta^0=\epsilon^0=\rho^0=1$ and $\alpha^{opt}=\beta^{opt}=\epsilon^{opt}=10^{-5}$, $N_{sam}=5$, $\gamma=0.5$, $\delta=10^{-4}$, $\mu_i=0.5$ for all $i$, and the maximum number of iterations is 1000. Additionally, we generate a test dataset with $|I_{test}|=300$ and report the average error of the model over this set (column 4) to illustrate its generalization capabilities. 

\begin{table}[ht]
    \centering
    \begin{tabular}{c|c|c|c|c|c}
        Method & Time (s) & Validation error & Test error \\
        \hline
        Algorithm \ref{alg:cap} & 24.8 & 7.59 & 7.73 \\
        Algorithm \ref{alg:cap}R & 28.2 & 7.32 & 7.65  \\
        Algorithm \ref{alg:cap}E & 5.2 & 7.88 & 8.07 
    \end{tabular}
    \caption{Average results for 20 instances of elastic-net problem \eqref{prob:elasticnet} across different versions of Algorithm \ref{alg:cap}.}
    \label{tab:elasticnet}
\end{table}

We can also reformulate the lower-level problem using inequality constraints by replacing $x'$ by its positive and negative part:
\begin{equation}\label{prob:enref}
    \underset{x^+,x^-\geq 0}{\text{minimize}} ~\|A (x^+-x^-)-b\|_2^2 + \exp(\lambda_1) (x^++x^-)+\tfrac{1}{2}\exp(\lambda_2)\|x^+-x^-\|_2^2.
\end{equation}
However, this reformulation has twice as many lower-level variables. The third row in Table \ref{tab:elasticnet} (Algorithm \ref{alg:cap}R) refers to the use of Algorithm \ref{alg:cap} to solve the reformulation in \eqref{prob:enref}. As expected the larger number of variables under consideration for Algorithm \ref{alg:cap}R produces longer computing times compared to Algorithm \ref{alg:cap} (column 2 in Table \ref{tab:elasticnet}), but the difference is only 12\%. The validation errors (column 3) are comparable and, in the case of Algorithm \ref{alg:cap}R, match those obtained from a $21\times21$ grid search varying $\lambda_1$ and $\lambda_2$ from $-5$ to 5. Thus, the algorithms perform well even though Assumption \ref{ass:D} fails. Moreover, the test error shows good generalization capabilities for both versions, achieving values that match those obtained from the grid search.

The last row in Table \ref{tab:elasticnet} (Algorithm \ref{alg:cap}E) reports results for Algorithm \ref{alg:cap} when stopping the first time we reach a point that satisfies $|w^\nu|\leq \eta^{\nu}$. The resulting validation error is only 8\% higher than the validation error produced by Algorithm \ref{alg:cap}R and 3\% higher than the validation error produced by Algorithm \ref{alg:cap} when stopping upon reaching convergence. 

In column 3 of Table \ref{tab:elasticnet} we do not report directly the objective value obtained by the algorithm but instead allow for retraining using $\lambda^{opt}$. This ensures that the model has the sparsity properties of an elastic-net, which can be of interest for decision makers. A similar approach is taken in \cite{gao2023moreau}, where it is noted that the optimal lower-level solution can still be useful to a practitioner whose main focus is the predictive accuracy of the model, since it achieves lower validation error, which is true in our case for Algorithm \ref{alg:cap}E. However, when $\alpha,\beta,\epsilon\searrow0$ (Algorithm \ref{alg:cap} and Algorithm \ref{alg:cap}R), the optimal lower-level solution matches the retrained model for both validation and test error, which shows that it provides an accurate representation of a solution of the actual problem without the need for further retraining.

\subsection{Data Poisoning}

For a second example, we consider the attacker problem in a data poisoning setting. We follow the formulation in \cite{jagielski2018manipulating}, in which the attacker's goal is to maximize the loss of a regression model on a known but untainted validation set by manipulating the training data producing the model. Unlike in \cite{jagielski2018manipulating}, we assume that the poisoned data points are not completely fabricated by the attacker, but come instead from real data points whose responses can be perturbed within a budget. We also let the attacker have access to every training data point, but the generalization to the case in which the attacker can only access a subset of the training dataset is straightforward. The problem formulation is
\begin{equation}\label{prob:datapoisoning}
    \underset{\|w\|_2^2\leq c, x\in \reals^d}{\text{maximize}} ~\frac{1}{|I_{val}|}\|\widehat A x-\widehat b\|_2^2 ~\text{subject to }~    x\in \nargmin_{x'} \|A x'-b-w\|_2^2 + \lambda_1 \|x'\|_1+\tfrac{1}{2}\lambda_2\|x'\|_2^2,
\end{equation}
where $A$, $\widehat{A}$, $b$, and $\widehat{b}$ are defined in the same way as in \eqref{prob:elasticnet}. The upper-level variable $w_i$ represents the perturbation injected by the attacker at data point $i$, the parameter $c\in \reals$ is the total budget of the attack, and $\lambda_1$ and $\lambda_2$ are considered to be fixed and known. The upper-level problem is constrained and its objective function is lower bounded. Since one can take
\begin{equation*}
g(w, x')= \|A x'-b-w\|_2^2+\tfrac{1}{2}\lambda_2\|x'\|_2^2, ~~ ~  G(w, x')=(x',-x'), ~~ ~  h(z_1,z_2)=\lambda_1\sum_{i=1}^d\max\{z_{1i},z_{2i}\},
\end{equation*}
the universal assumptions laid out after \eqref{prob:LL} hold. Moreover, $g$ and $G$ are analytic. With this, all assumptions for Theorem \ref{thm:const} but Assumption \ref{ass:D} are satisfied. We observe empirically that \eqref{SSOC2} is usually satisfied at the minimizers of the lower-level problem of \eqref{prob:datapoisoning} when $w$ is sufficiently large. Therefore, Assumption \ref{ass:D} typically holds when the budget $c$ is sufficiently large. 

We generate the datasets using the same process described for elastic-net regularization, setting $d=50$, $|I_{tr}|=100$, $|I_{val}|=100$, and $|I_{test}|=300$. We fix $\lambda_1=\exp(3)$ and $\lambda_2=\exp(2)$, set $c=100$, which corresponds to roughly 3\% of the average norm squared of $b_{tr}$, and run Algorithm \ref{alg:const} with $\alpha^0=\beta^0=0.01$, $\epsilon^0=\rho^0=0.1$, $\alpha^{opt}=\beta^{opt}=10^{-6}$, $\epsilon^{opt}=10^{-4}$, $N_{sam}=101$, $\gamma=0.5$, $\eta=30$, $\delta=10^{-4}$, $\theta^0=10^{-4}$, $\mu_{\alpha}=\mu_{\beta}=0.2$, and $\mu_{\epsilon}=0.8$. The results are reported in Table \ref{tab:datapoisoning}.

\begin{table}[ht]
    \centering
    \begin{tabular}{c|c | c | c | c}
        Method & Time (s) & Validation error & Test error & Infeasibility \\
        \hline
        Algorithm \ref{alg:const} & 37.3 & 17.4 & 13.6 & $2.9\cdot 10^{-3}$\\ 
        Algorithm \ref{alg:const}R & 49.2 & 17.1 & 13.5 & $3.7\cdot 10^{-3}$ \\
        Random Search & 21.1 & 11.3 & 10.5 & - 
    \end{tabular}
    \caption{Average results for 20 instances of data poisoning problem in \eqref{prob:datapoisoning} and different methods to solve it.}
    \label{tab:datapoisoning}
\end{table}

Column 2 in Table \ref{tab:datapoisoning} reports the average computation time, while column 3 reports the average normalized validation error defined as in Table \ref{tab:elasticnet}. We introduce a reformulation of \eqref{prob:datapoisoning} based on replacing $x'$ by its positive and negative part, in a similar way as in \eqref{prob:enref}, and reports the results of applying Algorithm \ref{alg:const} to this reformulation in the row of Table \ref{tab:datapoisoning} corresponding to Algorithm \ref{alg:const}R. The larger amount of variables in this reformulation produces computation times 32\% higher compared to Algorithm \ref{alg:const}.

Both versions achieve a validation error that is more than double the error in the clean dataset, where the validation error is 8.48, even though the maximum allowed perturbation is small (3\% of the magnitude of the response vector). Furthermore, the last row of Table \ref{tab:datapoisoning} reports the results obtained with a random search over 2000 points in the set of feasible $w$ and we can observe that both Algorithm \ref{alg:const} and Algorithm \ref{alg:const}R significantly outperform it. 

 Although the attacker is only interested in the error in the validation set in the original setting of this problem, we report in column 4 the error in a test dataset to illustrate that the poisoning remains effective even if the defender were to use the model on an unknown dataset. In column 5, we report the magnitude of the upper-level constraint violation, which is small for both versions of the algorithm.

\section{Conclusions}
In this article, we introduced a novel double regularization for a class of bilevel optimization problems whose lower level can be expressed as a convex extended nonlinear program. We showed that the regularized lower-level problem enjoys enhanced regularity properties pertaining to its primal-dual solution mapping, and we leveraged this properties to construct an algorithm based on gradient sampling that computes approximately stationary points of the regularized hyper-objective in terms of its Goldstein subdifferential. We established conditions under which the iterates of this algorithm converge subsequentially to a Clarke stationary point of the actual problem and showed through numerical experiments the value of the regularization and the flexible problem structure. 

Obtaining quantitative results about the quality of the approximation given by the regularized problem and nonasymptotic convergence guarantees for the algorithms are important directions for further studies to provide more robust guarantees for this method, since all convergence results presented in this article are asymptotic.

\medskip

\noindent {\bf Acknowledgements.} This work is supported in part by the Office of Naval Research under grant N00014-24-1-2492 and the National Science Foundation under grant CMMI-2432337. The authors thank Jinqi Gong for contributing with numerical results and to Lai Tian for valuable discussions and suggestions.

\bibliographystyle{plain}
\bibliography{refs}

@article{Royset.25,
  title={On Stability in Optimistic Bilevel Optimization},
  author={Royset, Johannes O.},
  journal={SIAM Journal on Optimization},
  volume={to appear},
  year={2026}
}

@article{BeckBienstockSchmidtThurauf.23,
	Author = {Y. Beck and D. Bienstock and M. Schmidt and J. Th\"{u}rauf},
	Journal = {Journal of Optimization Theory and Applications},
	Volume = {198},
	Title = {On a Computationally Ill-Behaved Bilevel Problem with a Continuous and Nonconvex Lower Level},
	Pages = {428-447},
	Year = {2023}}

@book{VaAn,
	Author = {R. T. Rockafellar and R. J-B Wets},
	Edition = {3rd printing-2009},
	Publisher = {Springer},
	Series = {Grundlehren der Mathematischen Wissenschaft},
	Title = {Variational Analysis},
	Volume = {317},
	Year = {1998}}

@article{hang2024role,
  title={Role of subgradients in variational analysis of polyhedral functions},
  author={Hang, Nguyen TV and Jung, Woosuk and Sarabi, Ebrahim},
  journal={Journal of Optimization Theory and Applications},
  volume={200},
  number={3},
  pages={1160--1192},
  year={2024},
  publisher={Springer}
}

@book{royset2021optimization,
  title={An optimization primer},
  author={Royset, Johannes O and Wets, Roger J-B},
  volume={440},
  year={2021},
  publisher={Springer}
}

@article{mordukhovich2016generalized,
  title={Generalized differentiation of piecewise linear functions in second-order variational analysis},
  author={Mordukhovich, Boris S and Sarabi, M Ebrahim},
  journal={Nonlinear Analysis},
  volume={132},
  pages={240--273},
  year={2016},
  publisher={Elsevier}
}

@book{fiacco1990nonlinear,
  title={Nonlinear programming: sequential unconstrained minimization techniques},
  author={Fiacco, Anthony V and McCormick, Garth P},
  year={1990},
  publisher={SIAM}
}

@article{dempe2000bundle,
  title={A bundle algorithm applied to bilevel programming problems with non-unique lower level solutions},
  author={Dempe, Stephan},
  journal={Computational Optimization and Applications},
  volume={15},
  pages={145--166},
  year={2000},
  publisher={Springer}
}

@article{ralph1995directional,
  title={Directional derivatives of the solution of a parametric nonlinear program},
  author={Ralph, Daniel and Dempe, Stephan},
  journal={Mathematical programming},
  volume={70},
  number={1},
  pages={159--172},
  year={1995},
  publisher={Springer}
}

@book{scholtes2012introduction,
  title={Introduction to piecewise differentiable equations},
  author={Scholtes, Stefan},
  year={2012},
  publisher={Springer}
}

@incollection{burke2020gradient,
  title={Gradient sampling methods for nonsmooth optimization},
  author={Burke, James V and Curtis, Frank E and Lewis, Adrian S and Overton, Michael L and Sim{\~o}es, Lucas EA},
  booktitle={Numerical nonsmooth optimization: State of the art algorithms},
  editor={Bagirov, Adil M and Gaudioso, Manlio and Karmitsa, Napsu and M{\"a}kel{\"a}, Marko M and Taheri, Sona},
  pages={201--225},
  year={2020},
  publisher={Springer}
}

@article{curtis2012sequential,
  title={A sequential quadratic programming algorithm for nonconvex, nonsmooth constrained optimization},
  author={Curtis, Frank E and Overton, Michael L},
  journal={SIAM Journal on Optimization},
  volume={22},
  number={2},
  pages={474--500},
  year={2012},
  publisher={SIAM}
}

@article{kiwiel2007convergence,
  title={Convergence of the gradient sampling algorithm for nonsmooth nonconvex optimization},
  author={Kiwiel, Krzysztof C},
  journal={SIAM Journal on Optimization},
  volume={18},
  number={2},
  pages={379--388},
  year={2007},
  publisher={SIAM}
}

@article{bolte2009tame,
  title={Tame functions are semismooth},
  author={Bolte, J{\'e}r{\^o}me and Daniilidis, Aris and Lewis, Adrian},
  journal={Mathematical Programming},
  volume={117},
  number={1},
  pages={5--19},
  year={2009},
  publisher={Springer}
}

@article{bierstone1988semianalytic,
  title={Semianalytic and subanalytic sets},
  author={Bierstone, Edward and Milman, Pierre D},
  journal={Publications Math{\'e}matiques de l'IH{\'E}S},
  volume={67},
  pages={5--42},
  year={1988}
}

@article{zou2005regularization,
  title={Regularization and variable selection via the elastic net},
  author={Zou, Hui and Hastie, Trevor},
  journal={Journal of the Royal Statistical Society Series B: Statistical Methodology},
  volume={67},
  number={2},
  pages={301--320},
  year={2005},
  publisher={Oxford University Press}
}

@article{gao2023moreau,
  title={Moreau envelope based difference-of-weakly-convex reformulation and algorithm for bilevel programs},
  author={Gao, Lucy L and Ye, Jane J and Yin, Haian and Zeng, Shangzhi and Zhang, Jin},
  journal={Preprint arXiv:2306.16761},
  volume={},
  year={2023}
}

@incollection{rockafellar1999extended,
  title={Extended nonlinear programming},
  author={Rockafellar, R Tyrrell},
  booktitle={Nonlinear Optimization and Related Topics},
  pages={381--399},
  year={1999},
  publisher={Springer},
  editor={Pillo, Gianni and Giannessi, Franco}
}

@book{dempe2002foundations,
  title={Foundations of bilevel programming},
  author={Dempe, Stephan},
  year={2002},
  publisher={Springer}
}

@inproceedings{khanduri2023linearly,
  title={Linearly constrained bilevel optimization: A smoothed implicit gradient approach},
  author={Khanduri, Prashant and Tsaknakis, Ioannis and Zhang, Yihua and Liu, Jia and Liu, Sijia and Zhang, Jiawei and Hong, Mingyi},
  booktitle={International Conference on Machine Learning},
  pages={16291--16325},
  year={2023},
  organization={PMLR}
}

@article{khanduri2025doubly,
  title={A Doubly Stochastically Perturbed Algorithm for Linearly Constrained Bilevel Optimization},
  author={Khanduri, Prashant and Tsaknakis, Ioannis and Zhang, Yihua and Liu, Sijia and Hong, Mingyi},
  journal={Preprint arXiv:2504.04545},
  volume={},
  year={2025}
}

@inproceedings{ji2021bilevel,
  title={Bilevel optimization: Convergence analysis and enhanced design},
  author={Ji, Kaiyi and Yang, Junjie and Liang, Yingbin},
  booktitle={International Conference on Machine Learning},
  pages={4882--4892},
  year={2021},
  organization={PMLR}
}

@article{ghadimi2018approximation,
  title={Approximation methods for bilevel programming},
  author={Ghadimi, Saeed and Wang, Mengdi},
  journal={Preprint arXiv:1802.02246},
  volume={},
  year={2018}
}

@inproceedings{chen2022single,
  title={A single-timescale method for stochastic bilevel optimization},
  author={Chen, Tianyi and Sun, Yuejiao and Xiao, Quan and Yin, Wotao},
  booktitle={International Conference on Artificial Intelligence and Statistics},
  pages={2466--2488},
  year={2022},
  organization={PMLR}
}

@inproceedings{liu2022bome,
 author = {Liu, Bo and Ye, Mao and Wright, Stephen and Stone, Peter and Liu, Qiang},
 booktitle = {Advances in Neural Information Processing Systems},
 pages = {17248--17262},
 publisher = {Curran Associates, Inc.},
 title = {BOME! Bilevel Optimization Made Easy: A Simple First-Order Approach},
 volume = {35},
 year = {2022}
}

@inproceedings{chen2024finding,
  title={On finding small hyper-gradients in bilevel optimization: Hardness results and improved analysis},
  author={Chen, Lesi and Xu, Jing and Zhang, Jingzhao},
  booktitle={The Thirty Seventh Annual Conference on Learning Theory},
  pages={947--980},
  year={2024},
  organization={PMLR}
}

@article{liu2024moreau,
  title={Moreau envelope for nonconvex bi-level optimization: A single-loop and hessian-free solution strategy},
  author={Liu, Risheng and Liu, Zhu and Yao, Wei and Zeng, Shangzhi and Zhang, Jin},
  journal={Preprint arXiv:2405.09927},
  volume={},
  year={2024}
}

@article{nghia2025geometric,
  title={Geometric characterizations of \text{L}ipschitz stability for convex optimization problems},
  author={Nghia, Tran TA},
  journal={SIAM Journal on Optimization},
  volume={35},
  number={2},
  pages={927--958},
  year={2025},
  publisher={SIAM}
}

@article{benko2024primal,
  title={Primal--dual stability in local optimality},
  author={Benko, Mat{\'u}{\v{s}} and Rockafellar, R Tyrrell},
  journal={Journal of Optimization Theory and Applications},
  volume={203},
  number={2},
  pages={1325--1354},
  year={2024},
  publisher={Springer}
}

@article{kunisch2013bilevel,
  title={A bilevel optimization approach for parameter learning in variational models},
  author={Kunisch, Karl and Pock, Thomas},
  journal={SIAM Journal on Imaging Sciences},
  volume={6},
  number={2},
  pages={938--983},
  year={2013},
  publisher={SIAM}
}

@article{alcantara2025unified,
  title={Unified smoothing approach for best hyperparameter selection problem using a bilevel optimization strategy},
  author={Alcantara, Jan Harold and Nguyen, Chieu Thanh and Okuno, Takayuki and Takeda, Akiko and Chen, Jein-Shan},
  journal={Mathematical Programming},
  volume={212},
  number={1},
  pages={479--518},
  year={2025},
  publisher={Springer}
}

@article{meng2001equivalent,
  title={An equivalent continuously differentiable model and a locally convergent algorithm for the continuous network design problem},
  author={Meng, Qiang and Yang, Hai and Bell, Michael GH},
  journal={Transportation Research Part B: Methodological},
  volume={35},
  number={1},
  pages={83--105},
  year={2001},
  publisher={Elsevier}
}

@article{zhu2023bilevel,
  title={A bilevel optimization model for the newsvendor problem with the focus theory of choice},
  author={Zhu, Xide and Li, Kevin W and Guo, Peijun},
  journal={4OR},
  volume={21},
  number={3},
  pages={471--489},
  year={2023},
  publisher={Springer}
}

@inproceedings{yang2019provably,
 author = {Yang, Zhuoran and Chen, Yongxin and Hong, Mingyi and Wang, Zhaoran},
 booktitle = {Advances in Neural Information Processing Systems},
 pages = {},
 publisher = {Curran Associates, Inc.},
 title = {Provably Global Convergence of Actor-Critic: A Case for Linear Quadratic Regulator with Ergodic Cost},
 volume = {32},
 year = {2019}
}

@inproceedings{jiang2021learning,
  title={Learning to defend by learning to attack},
  author={Jiang, Haoming and Chen, Zhehui and Shi, Yuyang and Dai, Bo and Zhao, Tuo},
  booktitle={International Conference on Artificial Intelligence and Statistics},
  pages={577--585},
  year={2021},
  organization={PMLR}
}

@article{burke2005robust,
  title={A robust gradient sampling algorithm for nonsmooth, nonconvex optimization},
  author={Burke, James V and Lewis, Adrian S and Overton, Michael L},
  journal={SIAM Journal on Optimization},
  volume={15},
  number={3},
  pages={751--779},
  year={2005},
  publisher={SIAM}
}

@book{coste2000introduction,
  title={An introduction to o-minimal geometry},
  author={Coste, Michel},
  year={2000},
  publisher={Istituti editoriali e poligrafici internazionali Pisa}
}

@inproceedings{kornowski2024first,
 author = {Kornowski, Guy and Padmanabhan, Swati and Wang, Kai and Zhang, Jimmy and Sra, Suvrit},
 booktitle = {Advances in Neural Information Processing Systems},
 pages = {141417--141460},
 publisher = {Curran Associates, Inc.},
 title = {First-Order Methods for Linearly Constrained Bilevel Optimization},
 volume = {37},
 year = {2024}
}

@article{kwon2023penalty,
  title={On penalty methods for nonconvex bilevel optimization and first-order stochastic approximation},
  author={Kwon, Jeongyeol and Kwon, Dohyun and Wright, Stephen and Nowak, Robert},
  journal={Preprint arXiv:2309.01753},
  volume={},
  year={2023}
}

@inproceedings{shen2023penalty,
  title={On penalty-based bilevel gradient descent method},
  author={Shen, Han and Chen, Tianyi},
  booktitle={International Conference on Machine Learning},
  pages={30992--31015},
  year={2023},
  organization={PMLR}
}

@inproceedings{jagielski2018manipulating,
  title={Manipulating machine learning: Poisoning attacks and countermeasures for regression learning},
  author={Jagielski, Matthew and Oprea, Alina and Biggio, Battista and Liu, Chang and Nita-Rotaru, Cristina and Li, Bo},
  booktitle={2018 IEEE Symposium on Security and Privacy (SP)},
  pages={19--35},
  year={2018},
  organization={IEEE}
}

@inproceedings{zhang2022revisiting,
  title={Revisiting and advancing fast adversarial training through the lens of bi-level optimization},
  author={Zhang, Yihua and Zhang, Guanhua and Khanduri, Prashant and Hong, Mingyi and Chang, Shiyu and Liu, Sijia},
  booktitle={International Conference on Machine Learning},
  pages={26693--26712},
  year={2022},
  organization={PMLR}
}

@article{zheng2024safe,
  author={Zheng, Zhi and Gu, Shangding},
  journal={IEEE Transactions on Artificial Intelligence}, 
  title={Safe Multiagent Reinforcement Learning With Bilevel Optimization in Autonomous Driving}, 
  year={2025},
  volume={6},
  number={4},
  pages={829-842}
  }

@article{mou2019bi,
  title={A bi-level optimization formulation of priority service pricing},
  author={Mou, Yuting and Papavasiliou, Anthony and Chevalier, Philippe},
  journal={IEEE Transactions on Power Systems},
  volume={35},
  number={4},
  pages={2493--2505},
  year={2019},
  publisher={IEEE}
}

@article{zhang2009bilevel,
  title={Bilevel programming model and solution method for mixed transportation network design problem},
  author={Zhang, Haozhi and Gao, Ziyou},
  journal={Journal of Systems Science and Complexity},
  volume={22},
  number={3},
  pages={446--459},
  year={2009},
  publisher={Springer}
}

@inproceedings{chen2024lower,
  title={Lower-level duality based reformulation and majorization minimization algorithm for hyperparameter optimization},
  author={Chen, He and Xu, Haochen and Jiang, Rujun and So, Anthony Man-Cho},
  booktitle={International Conference on Artificial Intelligence and Statistics},
  pages={784--792},
  year={2024},
  organization={PMLR}
}

@article{chen2025set,
  title={Set Smoothness Unlocks \text{C}larke Hyper-stationarity in Bilevel Optimization},
  author={Chen, He and Li, Jiajin and So, Anthony Man-Cho},
  journal={Preprint arXiv:2506.04587},
  volume={},
  year={2025}
}

@inproceedings{gao2022value,
  title={Value function based difference-of-convex algorithm for bilevel hyperparameter selection problems},
  author={Gao, Lucy L and Ye, Jane and Yin, Haian and Zeng, Shangzhi and Zhang, Jin},
  booktitle={International Conference on Machine Learning},
  pages={7164--7182},
  year={2022},
  organization={PMLR}
}

@article{diamond2016cvxpy,
  author  = {Steven Diamond and Stephen Boyd},
  title   = {{CVXPY}: {A} {P}ython-embedded modeling language for convex optimization},
  journal = {Journal of Machine Learning Research},
  year    = {2016},
  volume  = {17},
  number  = {83},
  pages   = {1--5},
}

@article{agrawal2018rewriting,
  author  = {Agrawal, Akshay and Verschueren, Robin and Diamond, Steven and Boyd, Stephen},
  title   = {A rewriting system for convex optimization problems},
  journal = {Journal of Control and Decision},
  year    = {2018},
  volume  = {5},
  number  = {1},
  pages   = {42--60},
}

@article{hang2025smoothness,
  title={Smoothness of subgradient mappings and its applications in parametric optimization},
  author={Hang, Nguyen TV and Sarabi, Ebrahim},
  journal={Set-Valued and Variational Analysis},
  volume={33},
  number={4},
  pages={41},
  year={2025},
  publisher={Springer}
}

@inproceedings{kwon2023fully,
  title={A fully first-order method for stochastic bilevel optimization},
  author={Kwon, Jeongyeol and Kwon, Dohyun and Wright, Stephen and Nowak, Robert D},
  booktitle={International Conference on Machine Learning},
  pages={18083--18113},
  year={2023},
  organization={PMLR}
}

\end{document}